\numberwithin{figure}{section}
\theoremstyle{plain}
\newtheorem{thm}{\protect\theoremname}
\theoremstyle{definition}
\newtheorem{defn}[thm]{\protect\definitionname}
\theoremstyle{plain}
\newtheorem{lem}[thm]{\protect\lemmaname}
\theoremstyle{remark}
\newtheorem{rem}[thm]{\protect\remarkname}
\theoremstyle{plain}
\newtheorem{prop}[thm]{\protect\propositionname}
\theoremstyle{plain}
\newtheorem{cor}[thm]{\protect\corollaryname}
\theoremstyle{definition}
\newtheorem{example}[thm]{\protect\examplename}
\providecommand{\corollaryname}{Corollary}
\providecommand{\definitionname}{Definition}
\providecommand{\examplename}{Example}
\providecommand{\lemmaname}{Lemma}
\providecommand{\propositionname}{Proposition}
\providecommand{\remarkname}{Remark}
\providecommand{\theoremname}{Theorem}
\begin{document}
~~\includegraphics[scale=0.52]{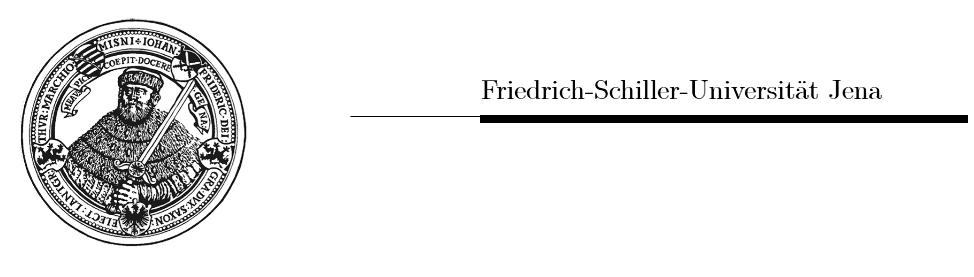}
\title{Embeddings of Weighted Morrey Spaces}
\maketitle
\begin{doublespace}
\begin{center}
\textbf{Master Thesis}
\par\end{center}

\begin{center}
to achieve the Academic Degree 
\par\end{center}

\begin{center}
Master of Science (M. Sc.)
\par\end{center}

\begin{center}
in Mathematics
\par\end{center}

\begin{center}
FRIEDRICH-SCHILLER-UNIVERSIT\"{A}T JENA
\par\end{center}

\begin{center}
Fakultät für Mathematik und Informatik\textbf{\medskip{}
\bigskip{}
}
\par\end{center}

\begin{center}
\textbf{\bigskip{}
}
\par\end{center}

\begin{center}
\textbf{\bigskip{}
}
\par\end{center}

\begin{center}
\textbf{\bigskip{}
}
\par\end{center}
\end{doublespace}

\begin{flushleft}
~~~~~~~~~~~~~~~~~~~~~~~~~~~~~~~~~~~~~Submitted
by: ~~Marcus Gerhold
\par\end{flushleft}

\begin{flushleft}
~~~~~~~~~~~~~~~~~~~~~~~~~~~~~~~~~~~~~Date
of Birth: ~~30/09/1989
\par\end{flushleft}

\begin{flushleft}
~~~~~~~~~~~~~~~~~~~~~~~~~~~~~~~~~~~~~~~~~~~~~Adviser:
~~Prof. Dr. Dorothee D. Haroske 
\par\end{flushleft}

\begin{doublespace}
\begin{center}
~~~~~~~~Jena, 30/10/2013
\par\end{center}
\end{doublespace}

\thispagestyle{empty}

\newpage{}
\begin{abstract}
In this master thesis we recall already established definitions and
basic properties of classical Morrey spaces in an attempt to expand
known facts to their weighted counterparts. To do so, we will recall
properties of Muckenhoupt weights, which we will use to derive further
properties of weighted Morrey spaces. We will also show the boundedness
of the Hardy-Littlewood maximal operator on weighted Morrey spaces.
Throughout this thesis we will have a look at example weights and
show in which case Morrey spaces equipped with these weights are embedded
in each other. 
\end{abstract}

\thispagestyle{empty}

\newpage{}

\tableofcontents{}

\newpage{}
\thanks{I would like to express my sincere gratitude and appreciation to my
adviser Prof. Dr. Dorothee Haroske for her constant support, guidance
and patience throughout my work on this thesis.}

\thispagestyle{empty}

\newpage{}

\part*{Mathematical Background}

We begin by recapturing some basic notations and conventions of which
will be assumed that the reader is well acquainted with. If this,
however, is not the case, we refer to \cite{H. Triebel} for the further
mathematical background needed within this thesis.

Let $n\in\mathbb{N}$. As usual let $L_{p}\left(\mathbb{R}^{n}\right):=\left\{ f:\Vert f\vert L_{p}\left(\mathbb{R}^{n}\right)\Vert<\infty\right\} $
denote the function spaces of equivalence classes $\left[f\right]$
of $p$ integrable functions, where $0<p\leq\infty$. We use the corresponding
quasi-norm $\Vert f\vert L_{p}\left(\mathbb{R}^{n}\right)\Vert:=\left(\int_{\mathbb{R}^{n}}\vert f\left(x\right)\vert^{p}dx\right)^{1/p}$
and the usual change for $p=\infty$, i.e. $\Vert f\vert L_{\infty}\left(\mathbb{R}^{n}\right)\Vert=\inf_{C>0}\left(\mu\left(\left\{ x\in\mathbb{R}^{n}:\left|f\left(x\right)\right|>C\right\} \right)=0\right)$
with $\mu$ being the Lebesgue measure. By $L_{p}^{loc}$ we refer
to $p$ integrable functions on every compact set $K$ and with $w-L_{p}\left(\mathbb{R}^{n}\right)$
we mean the weak Lebesgue spaces defined in the usual manner $w-L_{p}\left(\mathbb{R}^{n}\right)=\left\{ f:\Vert f\vert w-L_{p}\left(\mathbb{R}^{n}\right)\Vert<\infty\right\} $,
with their corresponding norm $\Vert f\vert w-L_{p}\left(\mathbb{R}^{n}\right)\Vert=\sup_{t>0}t^{p}\mu\left(x\in\mathbb{R}^{n}\,:\,\left|f\left(x\right)\right|>t\right)$.

Because of the occurrence of weights and weighted function spaces
let us first define what we mean when we refer to a general weight.
A weight $\omega$ is a function that satisfies the conditions $\omega\in L_{1}^{loc}$
where $\omega>0$ almost everywhere. We denote the weighted Lebesgue
spaces by $L_{p}\left(\mathbb{R}^{n},\omega\right):=\left\{ f:\Vert f\vert L_{p}\left(\mathbb{R}^{n},\omega\right)\Vert<\infty\right\} $,
with their corresponding norm $\Vert f\vert L_{p}\left(\mathbb{R}^{n},\omega\right)\Vert:=\left(\int_{\mathbb{R}^{n}}\vert f\left(x\right)\vert^{p}\omega\left(x\right)dx\right)^{1/p}$
for a weight $\omega$. 

Without further explanation, we will use $p'$ for any given $0<p\leq\infty$,
such that $\frac{1}{p}+\frac{1}{p'}=1$ with the usual agreement,
that if $0<p\leq1,$ then $p'=\infty$ or vice versa. Unless stated
otherwise we will use $L_{p}:=L_{p}\left(\mathbb{R}^{n}\right)$,
$L_{p}\left(\omega\right):=L_{p}\left(\mathbb{R}^{n},\omega\right)$
and also $L_{p}\left(\Omega\right)$ and $L_{p}\left(\Omega,\omega\right)$
whenever the Lebesgue space is restricted to a bounded domain $\Omega\subset\mathbb{R}^{n}$.
We will also take for granted that all function spaces mentioned above
are quasi-Banach spaces, i.e. Banach spaces for $p\geq1$ and quasi-Banach
spaces for $0<p<1$.

Whenever we speak of a ball with radius $R>0$ centered at $x_{0}\in\mathbb{R}^{n}$
we will write $B_{R}\left(x_{0}\right)$ and use the notation $Q_{r}\left(x_{0}\right)$
for paraxial cubes with side length $r>0$ centered at $x_{0}\in\mathbb{R}^{n}$
accordingly. Moreover we also use the notation $\lambda Q$ whenever
the side length of the cube $Q$ is multiplied with $\lambda>0$,
i.e. $\lambda Q_{r}\left(x_{0}\right):=Q_{\lambda r}\left(x_{0}\right)$
(or the radius of the ball $B$ respectively). Also for matters of
abbreviation we write $\left|K\right|:=\mu\left(K\right)$ if $K$
is a measurable subset of $\mathbb{R}^{n}$ with respect to the Lebesgue
measure and by $\chi_{K}$ we refer to the characteristic function,
i.e. $\chi_{K}\left(x\right)=1$ if $x\in K$ and $0$ if $x\notin K$.
For a weight $\omega$ and a measurable set $K$ we use the notation
$\omega\left(K\right):=\int_{K}\omega\left(x\right)dx$. Whenever
we write $a\sim b$, we mean that there are positive constants $c,C>0$
such that $ca\leq b\leq Ca$. Furthermore we will denote the Hardy-Littlewood
maximal operator with $\mathcal{M}$. It is normally defined as 

\[
\mathcal{M}f\left(x\right):=\sup_{R>0}\frac{1}{\left|B_{R}\left(x\right)\right|}\int_{B_{R}\left(x\right)}\left|f\left(y\right)\right|dy.
\]

Finally we remark that throughout the thesis $C$ will denote a positive
constant, which can be different even in a single chain of inequalities
unless explicitly stated otherwise or if a more specific use of constants
and their dependencies is required for the current purpose. 

\newpage{}

In the following we will list some well known definitions and results
that we will use throughout the thesis.
\begin{defn}
We say an operator $T\,:\,L_{p}\left(\mathbb{R}^{n}\right)\rightarrow w-L_{q}\left(\mathbb{R}^{n}\right)$
is of weak-type $\left(p,q\right)$ if

\[
\Vert Tf\vert w-L_{q}\left(\mathbb{R}^{n}\right)\Vert\leq C\cdot\Vert f\vert L_{p}\left(\mathbb{R}^{n}\right)\Vert
\]

holds for all $f\in L_{p}\left(\mathbb{R}^{n}\right)$. 

An operator is said to be of strong-type $\left(p,q\right)$ if

\[
\Vert Tf\vert L_{q}\left(\mathbb{R}^{n}\right)\Vert\leq C\cdot\Vert f\vert L_{p}\left(\mathbb{R}^{n}\right)\Vert
\]

holds for all $f\in L_{p}\left(\mathbb{R}^{n}\right)$.
\end{defn}

\begin{lem}
\label{lem:HoelderUndMinkowski}Let $1<p<\infty$ and $\Omega\subset\mathbb{R}^{n}$
be a (not necessarily bounded) domain. If $f,g\in L_{p}\left(\Omega\right)$
and $h\in L_{p'}\left(\Omega\right)$ then 

\[
\Vert f\cdot h\vert L_{1}\left(\Omega\right)\Vert\leq\Vert f\left(x\right)\vert L_{p}\left(\Omega\right)\Vert\cdot\Vert h\left(x\right)\vert L_{p'}\left(\Omega\right)\Vert\mbox{\,(Hölder's inequality)}
\]
\[
\mbox{and}
\]
\[
\Vert f+g\vert L_{p}\left(\Omega\right)\Vert\leq\Vert f\vert L_{p}\left(\Omega\right)\Vert+\Vert g\vert L_{p}\left(\Omega\right)\Vert\,\mbox{(Minkowski's inequality)}.
\]
\end{lem}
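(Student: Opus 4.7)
The plan is to treat the two inequalities separately, using the classical route where Hölder is proved first by means of Young's inequality, and Minkowski is then deduced as a consequence of Hölder.

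For Hölder's inequality, the starting point is the elementary pointwise estimate
\[
a\,b\leq\frac{a^{p}}{p}+\frac{b^{p'}}{p'}\qquad(a,b\geq 0),
\]
known as Young's inequality, which follows from the concavity of the logarithm (or equivalently from the weighted AM--GM inequality applied to $a^p$ and $b^{p'}$ with weights $1/p$ and $1/p'$). After dealing with the trivial cases $\Vert f\vert L_p(\Omega)\Vert=0$ or $\Vert h\vert L_{p'}(\Omega)\Vert=0$, I would normalise by passing to $F(x):=f(x)/\Vert f\vert L_p(\Omega)\Vert$ and $H(x):=h(x)/\Vert h\vert L_{p'}(\Omega)\Vert$, which have unit norm in $L_p(\Omega)$ and $L_{p'}(\Omega)$ respectively. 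Applying Young pointwise to $a=|F(x)|$, $b=|H(x)|$ and integrating over $\Omega$ yields
\[
\int_{\Omega}|F(x)H(x)|\,dx\leq\frac{1}{p}\int_{\Omega}|F(x)|^{p}dx+\frac{1}{p'}\int_{\Omega}|H(x)|^{p'}dx=\frac{1}{p}+\frac{1}{p'}=1,
\]
and multiplying through by the two normalising constants produces the claim.

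For Minkowski's inequality, the idea is to exploit the trivial pointwise bound $|f(x)+g(x)|^{p}\leq|f(x)|\,|f(x)+g(x)|^{p-1}+|g(x)|\,|f(x)+g(x)|^{p-1}$ and to control each of the two integrals on the right by Hölder's inequality with exponents $p$ and $p'$. Here the key arithmetic observation is $(p-1)p'=p$, which makes $|f+g|^{p-1}$ an element of $L_{p'}(\Omega)$ with norm equal to $\Vert f+g\vert L_p(\Omega)\Vert^{p/p'}$. One first verifies that $f+g\in L_p(\Omega)$ (using the crude bound $|f+g|^{p}\leq 2^{p}(|f|^{p}+|g|^{p})$) so that all quantities are finite. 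Hölder then gives
\[
\Vert f+g\vert L_p(\Omega)\Vert^{p}\leq\bigl(\Vert f\vert L_p(\Omega)\Vert+\Vert g\vert L_p(\Omega)\Vert\bigr)\,\Vert f+g\vert L_p(\Omega)\Vert^{p/p'},
\]
and after treating the case $\Vert f+g\vert L_p(\Omega)\Vert=0$ separately, one divides by $\Vert f+g\vert L_p(\Omega)\Vert^{p/p'}$ and uses $p-p/p'=1$ to finish.

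The main point requiring care is not the algebra but the handling of degenerate and limiting cases, in particular ensuring that we only divide by non-zero, finite quantities in the normalisation steps; once this bookkeeping is done, both inequalities reduce to Young's inequality and a direct integration, so I do not anticipate a substantive obstacle.
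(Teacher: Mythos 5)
Your proof is correct and complete; the Young-inequality route to Hölder followed by the standard deduction of Minkowski is exactly the classical argument. The paper itself gives no proof here, only a citation to Triebel, so your write-up supplies the standard details (including the degenerate cases and the finiteness of $\Vert f+g\vert L_{p}\left(\Omega\right)\Vert$) that the reference would contain.
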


\begin{proof}
One proof can be found in \cite[Sect. 1.3.4.; Lemma]{H. Triebel}.
\end{proof}
\begin{rem}
\label{WeightedHoelder}We should not leave the fact uncommented,
that Hölder's inequality also holds for weighted Lebesgue spaces.
For two functions $f\in L_{r}\left(\Omega\right)$ and $h\in L_{r'}\left(\Omega\right)$,
a weight $\omega$, an open set $\Omega$ and $r>1$ with its dual
$r'$, we have
\begin{eqnarray*}
\int_{\Omega}f\left(x\right)h\left(x\right)\omega\left(x\right)dx & = & \int_{\Omega}f\left(x\right)\omega\left(x\right)^{1/r}h\left(x\right)\omega\left(x\right)^{1/r'}dx\\
 & \leq & \left(\int_{\Omega}f\left(x\right)^{r}\omega\left(x\right)dx\right)^{1/r}\left(\int_{\Omega}h\left(x\right)^{r'}\omega\left(x\right)dx\right)^{1/r'}.
\end{eqnarray*}
Using Hölder's inequality, we can also see that Minkowski's inequality
holds for weighted Lebesgue spaces. Therefore additionally let $g\in L_{r}\left(\Omega\right)$:
\begin{eqnarray*}
 &  & \int_{\Omega}\left|f\left(x\right)+g\left(x\right)\right|^{r}\omega\left(x\right)dx\\
 & \leq & \int_{\Omega}\left(\left|f\left(x\right)\right|+\left|g\left(x\right)\right|\right)\omega\left(x\right)^{1/r}\left|f\left(x\right)+g\left(x\right)\right|^{r-1}\omega\left(x\right)^{1/r'}dx\\
 & \overset{\tiny\mbox{Hölder}}{\leq} & \left(\left(\int_{\Omega}\left|f\left(x\right)\right|^{r}\omega\left(x\right)dx\right)^{1/r}+\left(\int_{\Omega}\left|g\left(x\right)\right|^{r}\omega\left(x\right)dx\right)^{1/r}\right)\\
 &  & \cdot\left(\int_{\Omega}\left|f\left(x\right)+g\left(x\right)\right|^{\left(r-1\right)r'}\omega\left(x\right)dx\right)^{1/r'}\\
 & = & \left(\Vert f\vert L_{r}\left(\omega\right)\Vert+\Vert g\vert L_{r}\left(\omega\right)\Vert\right)\frac{\Vert f+g\vert L_{r}\left(\omega\right)\Vert^{r}}{\Vert f+g\vert L_{r}\left(\omega\right)\Vert}.
\end{eqnarray*}

Now multiplying both sides with $\frac{\Vert f+g\vert L_{r}\left(\omega\right)\Vert^{r}}{\Vert f+g\vert L_{r}\left(\omega\right)\Vert}$
yields the weighted inequality.

\newpage{}
\end{rem}

\begin{thm}
\label{MarcinkiewiczInterpolationTheorem}(Marcinkiewicz Interpolation
Theorem) Let $1\leq p_{0}<p_{1}\leq\infty$ and an operator $T:\mathbb{R}^{n}\rightarrow\mathbb{R}^{n}$
be sublinear and of weak-type $\left(p_{0},p_{0}\right)$ and weak-type
$\left(p_{1},p_{1}\right)$. Then $T$ is of strong-type $\left(p,p\right)$
for all $p_{0}<p<p_{1}$.
\end{thm}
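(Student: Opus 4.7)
The plan is to pass from the distribution function of $Tf$ to its $L_{p}$-norm through the layer-cake formula, after splitting $f$ at each level $t$ into a low-integrability and a high-integrability piece. For each $t>0$ I would write $f=f_{0}+f_{1}$, where
\[
f_{0}:=f\cdot\chi_{\{|f|>\gamma t\}},\qquad f_{1}:=f\cdot\chi_{\{|f|\leq\gamma t\}},
\]
with a positive parameter $\gamma$ to be fixed later so that the two weak-type estimates fit together cleanly. The point of the splitting is that $f_{0}$ only sees the large values of $f$, and hence $f_{0}\in L_{p_{0}}$ since $p_{0}<p$; while $f_{1}$ is pointwise bounded by $\gamma t$ and satisfies $f_{1}\in L_{p_{1}}$ because $p_{1}>p$.

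Next I would invoke the sublinearity of $T$ to get $|Tf|\leq|Tf_{0}|+|Tf_{1}|$, so that
\[
\{x:|Tf(x)|>t\}\subset\{|Tf_{0}|>t/2\}\cup\{|Tf_{1}|>t/2\}.
\]
Applying the two weak-type hypotheses to the right-hand side yields
\[
\mu\bigl(\{|Tf|>t\}\bigr)\leq\Bigl(\tfrac{2C_{0}}{t}\Bigr)^{p_{0}}\int_{\{|f|>\gamma t\}}|f|^{p_{0}}dx+\Bigl(\tfrac{2C_{1}}{t}\Bigr)^{p_{1}}\int_{\{|f|\leq\gamma t\}}|f|^{p_{1}}dx.
\]
In the endpoint case $p_{1}=\infty$ I would instead pick $\gamma=1/(2C_{1})$, which forces $\Vert Tf_{1}\vert L_{\infty}\Vert\leq t/2$, so that the second set is empty and the corresponding term disappears.

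The final step is to insert this distributional bound into the layer-cake identity
\[
\Vert Tf\vert L_{p}\Vert^{p}=p\int_{0}^{\infty}t^{p-1}\mu\bigl(\{|Tf|>t\}\bigr)dt
\]
and to exchange the order of integration via Fubini. The resulting inner $t$-integrals take the form $\int_{0}^{|f(x)|/\gamma}t^{p-1-p_{0}}dt$ and $\int_{|f(x)|/\gamma}^{\infty}t^{p-1-p_{1}}dt$, which converge precisely because $p_{0}<p<p_{1}$; both evaluate to constant multiples of $|f(x)|^{p}$, and combining them gives the desired estimate $\Vert Tf\vert L_{p}\Vert\leq C\Vert f\vert L_{p}\Vert$.

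The main obstacle is bookkeeping rather than idea: one has to keep the constants $C_{0},C_{1},\gamma,p,p_{0},p_{1}$ under control so that the two contributions genuinely add up, and to treat the case $p_{1}=\infty$ separately because there the weak-type hypothesis really is an $L_{\infty}\to L_{\infty}$ bound rather than a distributional statement about level sets.
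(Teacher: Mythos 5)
Your plan is correct: the splitting of $f$ at height $\gamma t$ into a large part in $L_{p_0}$ and a bounded part in $L_{p_1}$, the use of sublinearity to split the level set, the application of the two weak-type bounds, and the layer-cake/Fubini computation with the separate treatment of $p_1=\infty$ via the choice $\gamma=1/(2C_1)$ is exactly the classical argument. The paper itself gives no proof but only refers to \cite[Ch. 2; Thm. 2.4.]{J. Duoandikoetxea}, and the proof given there is essentially the one you outline, so there is nothing to object to beyond carrying out the bookkeeping you already identify.
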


\begin{proof}
The reader may check \cite[Ch. 2; Thm. 2.4.]{J. Duoandikoetxea} for
the proof of an even more general version of this theorem.
\end{proof}
\begin{rem}
To demonstrate an application of an extended version of the Marcinkiewicz
interpolation theorem, we will have a look at the proof of the boundedness
of the Fourier transform 
\[
\mathcal{F}f\left(\xi\right):=\int_{\mathbb{R}^{n}}f\left(x\right)e^{-2\pi ix\cdot\xi}dx.
\]
After expanding the Fourier transform to $L_{2}\left(\mathbb{R}^{n}\right)$,
where we use that the space $L_{1}\left(\mathbb{R}^{n}\right)\cap L_{2}\left(\mathbb{R}^{n}\right)$
is dense in $L_{2}\left(\mathbb{R}^{n}\right)$ one can show that
it is an isometry on $L_{2}\left(\mathbb{R}^{n}\right)$ using Plancharel's
theorem. Since the transform is also a bounded operator from $L_{1}\left(\mathbb{R}^{n}\right)$
to $L_{\infty}\left(\mathbb{R}^{n}\right)$, one can use an extended
version of the Marcinkiewicz interpolation theorem to show that the
Fourier transform is a bounded operator from $L_{p}\left(\mathbb{R}^{n}\right)$
to $L_{p'}\left(\mathbb{R}^{n}\right)$ for $1\leq p\leq2$. For a
more accurate investigation on this topic, the reader may check either
one of the following: \cite[p. 11 et seqq.]{J. Duoandikoetxea}, \cite[Sect. 2.2.]{L. Grafakos }
or \cite[Sect. 1.2.]{J. Bergh; J. L=0000F6fstr=0000F6m}.
\end{rem}

\begin{prop}
\label{FeffermanSteinInequality}(Fefferman Stein Inequality)

Let $f$ be a function and $\varphi\geq0$, then the following inequality
holds for $t>0$:
\[
\int_{\left\{ x:\,\mathcal{M}f\left(x\right)>t\right\} }\varphi\left(x\right)dx\leq\frac{C}{t}\int_{\mathbb{R}^{n}}\left|f\left(x\right)\right|\left(\mathcal{M}\varphi\right)\left(x\right)dx.
\]
\end{prop}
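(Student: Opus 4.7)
The plan is to prove the inequality by a standard Vitali covering argument applied to the level set $E_{t}:=\{x\in\mathbb{R}^{n}:\mathcal{M}f(x)>t\}$, combined with two pointwise observations: every ball appearing in the covering witnesses the defining inequality of $\mathcal{M}f$ (and thus gives $|B|<\tfrac{1}{t}\int_{B}|f|$), while $\mathcal{M}\varphi(y)$ controls the average of $\varphi$ over any ball containing $y$.

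First, I would observe that $E_{t}$ is open, so for every $x\in E_{t}$ there exists a ball $B_{x}=B_{r_{x}}(x)$ with
\[
\frac{1}{|B_{x}|}\int_{B_{x}}|f(y)|\,dy>t,\qquad\text{i.e.}\qquad|B_{x}|<\tfrac{1}{t}\int_{B_{x}}|f(y)|\,dy.
\]
Assuming the radii $r_{x}$ are uniformly bounded (which one may reduce to by considering $E_{t}\cap B_{N}(0)$ and letting $N\to\infty$, since $\mathcal{M}f(x)<\infty$ a.e.\ whenever the right-hand side is finite), Vitali's covering lemma produces a countable, pairwise disjoint subfamily $\{B_{k}\}_{k}$ such that $E_{t}\subset\bigcup_{k}5B_{k}$.

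Second, for each $k$ and any $y\in B_{k}=B_{r_{k}}(x_{k})$ one has $5B_{k}\subset B_{6r_{k}}(y)$, so by the definition of the maximal operator
\[
\mathcal{M}\varphi(y)\geq\frac{1}{|B_{6r_{k}}(y)|}\int_{B_{6r_{k}}(y)}\varphi(z)\,dz\geq\frac{1}{6^{n}|B_{k}|}\int_{5B_{k}}\varphi(z)\,dz.
\]
Rearranging, $\int_{5B_{k}}\varphi\leq 6^{n}|B_{k}|\,\mathcal{M}\varphi(y)$ for every $y\in B_{k}$. Multiplying by $|f(y)|$, integrating over $B_{k}$, and using the lower bound on $\int_{B_{k}}|f|$ from the first step yields
\[
\int_{5B_{k}}\varphi\cdot t|B_{k}|<\int_{5B_{k}}\varphi\cdot\int_{B_{k}}|f(y)|\,dy\leq 6^{n}|B_{k}|\int_{B_{k}}|f(y)|\mathcal{M}\varphi(y)\,dy,
\]
hence $\int_{5B_{k}}\varphi\leq\tfrac{6^{n}}{t}\int_{B_{k}}|f|\mathcal{M}\varphi$.

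Finally, I would sum these estimates over $k$ and exploit the pairwise disjointness of the $B_{k}$:
\[
\int_{E_{t}}\varphi\,dx\leq\sum_{k}\int_{5B_{k}}\varphi\,dx\leq\frac{6^{n}}{t}\sum_{k}\int_{B_{k}}|f|\mathcal{M}\varphi\,dy\leq\frac{C}{t}\int_{\mathbb{R}^{n}}|f(y)|\mathcal{M}\varphi(y)\,dy,
\]
with $C=6^{n}$. The main obstacle, such as it is, is the covering step: one must legitimately apply Vitali's lemma, which requires a uniform bound on the radii $r_{x}$, and this is dealt with by the standard truncation/exhaustion argument indicated above (the case $\int|f|\mathcal{M}\varphi=\infty$ being trivial). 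The rest of the argument is a routine combination of the two pointwise inequalities with the disjointness of the $B_{k}$.
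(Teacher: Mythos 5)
The paper does not actually prove this proposition; it only cites the original Fefferman--Stein article. What you have written is, in substance, the standard covering-lemma proof that the cited reference contains, so there is no genuine divergence of method to report --- you are simply supplying the argument the paper omits. Your two pointwise estimates are correct: $|B_{k}|<t^{-1}\int_{B_{k}}|f|$ for each selected ball, and $\int_{5B_{k}}\varphi\leq6^{n}|B_{k}|\,\mathcal{M}\varphi(y)$ for every $y\in B_{k}$ (since $5B_{k}\subset B_{6r_{k}}(y)$), and combining them with the disjointness of the $B_{k}$ gives the inequality with $C=6^{n}$. The one step that needs repair is the covering itself: restricting the \emph{centres} to $E_{t}\cap B_{N}(0)$ does not bound the \emph{radii} $r_{x}$ of the chosen balls, so it does not by itself legitimise the infinite Vitali lemma --- the radii can genuinely be unbounded (consider $f$ bounded below by $2t$, where every radius works). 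The standard fixes are either to run the argument on an arbitrary compact $K\subset E_{t}$, where a finite subcover plus the finite Vitali selection (with dilation factor $3$, hence $C=4^{n}$) suffices, and then exhaust $E_{t}$ by compacts using inner regularity of $\varphi\,dx$; or to truncate the maximal operator to radii $\leq N$, prove the estimate for $\{\mathcal{M}_{N}f>t\}$, and let $N\to\infty$ by monotone convergence. With either of these in place your argument is complete.
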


\begin{proof}
The proof can be found in \cite{C. Fefferman; E. M. Stein}. 
\end{proof}

\part{\label{part:Part1}Unweighted Morrey Spaces }

Morrey spaces, named after Charles Bradfield Morrey Jr., can be understood
as generalizations of the Lebesgue spaces in the sense that they serve
the purpose of describing local behavior of the $L_{p}$-norms by
introducing a new parameter $u$. This makes them especially useful
for the specification of solutions of non-linear partial differential
equations. If the parameters $u$ and $p$ are equal, the Morrey space
$M_{u,p}$ coincides with the Lebesgue space $L_{p}$ as we will show
later on. However if the parameter $p$ is smaller than $u$, then
the Lebesgue spaces are continuously embedded in the Morrey space.

Although the topic of this thesis deals with weighted Morrey spaces,
it is only natural to have a look at their unweighted precursors first,
to get an idea of basic properties and relations. At first we give
a definition of (unweighted) Morrey spaces and establish some properties
mainly following \cite{M. Rosenthal} before we continue dealing with
their weighted counterparts.\newpage{}
\begin{defn}
(Morrey spaces) Let $0<p\leq u\leq\infty$. We then call
\[
M_{u,p}:=M_{u,p}\left(\mathbb{R}^{n}\right):=\left\{ f\in L_{p}^{loc}\left(\mathbb{R}^{n}\right):\Vert f\vert M_{u,p}\Vert<\infty\right\} 
\]
Morrey spaces, with their corresponding quasi-norm 

\[
\Vert f\vert M_{u,p}\Vert=\sup_{x\in\mathbb{R}^{n},R>0}R^{n(1/u-1/p)}\left(\int_{B_{R}\left(x\right)}\vert f(y)\vert^{p}dy\right)^{1/p}.
\]
For $p=\infty$ we define the norm $\Vert\cdot\vert M_{\infty,\infty}\Vert$
with respect to the norm of essentially bounded functions analogously.
\end{defn}

Here $\Vert\cdot\vert M_{u,p}\Vert$ is a norm for $p\geq1$ and a
quasi-norm for $0<p<1$. Exactly as it is done in \cite{M. Rosenthal},
we list some basic properties in the following Lemma. It can be found
in \cite[Rem. 1.2.]{M. Rosenthal} where a further reference to \cite[Thm. 4.3.6.]{S. Fucik; O. John; A. Kufner}
is made.
\begin{lem}
\label{lem:(Embedding C)}(i) If $u<p$ then $M_{u,p}=\left\{ 0\right\} $.

(ii) If $u=\infty$ then $M_{\infty,p}=L_{\infty}$ with equivalence
of the corresponding norms.

(iii) If $u=p$ we have $M_{p,p}=L_{p}$.
\end{lem}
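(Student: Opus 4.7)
All three assertions follow by inspecting the defining quasi-norm
\[
\Vert f\vert M_{u,p}\Vert = \sup_{x\in\mathbb{R}^{n},\,R>0} R^{n(1/u-1/p)}\left(\int_{B_{R}(x)}|f(y)|^{p}\,dy\right)^{1/p}
\]
and examining what the exponent $n(1/u-1/p)$ does in each of the three regimes.

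For (i), the key observation is that when $u<p$ the exponent $n(1/u-1/p)$ is strictly positive, so the scaling factor $R^{n(1/u-1/p)}$ tends to $+\infty$ as $R\to\infty$. I would argue by contradiction: suppose $f\in M_{u,p}$ with $f\neq 0$ on a set of positive measure. Then by $\sigma$-additivity there exist $x_{0}\in\mathbb{R}^{n}$ and $R_{0}>0$ with $c:=\int_{B_{R_{0}}(x_{0})}|f(y)|^{p}\,dy>0$. For every $R\geq R_{0}$ the integral $\int_{B_{R}(x_{0})}|f|^{p}$ is monotone non-decreasing in $R$, hence is at least $c$, and therefore
\[
\Vert f\vert M_{u,p}\Vert \geq c^{1/p}\sup_{R\geq R_{0}} R^{n(1/u-1/p)} = \infty,
\]
which contradicts $f\in M_{u,p}$. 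Hence $f=0$ a.e.

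For (ii), with $u=\infty$ the factor becomes $R^{-n/p}$, and using $|B_{R}(x)|=c_{n}R^{n}$ for the Lebesgue measure of the unit ball $c_{n}$, one rewrites
\[
\Vert f\vert M_{\infty,p}\Vert = c_{n}^{1/p}\sup_{x,R}\left(\frac{1}{|B_{R}(x)|}\int_{B_{R}(x)}|f(y)|^{p}\,dy\right)^{1/p}.
\]
The upper estimate $\Vert f\vert M_{\infty,p}\Vert\leq c_{n}^{1/p}\Vert f\vert L_{\infty}\Vert$ is immediate from $|f(y)|\leq\Vert f\vert L_{\infty}\Vert$ a.e. The converse estimate is the step that requires some care: I would apply the Lebesgue differentiation theorem to $|f|^{p}\in L_{1}^{\mathrm{loc}}$ to obtain
\[
|f(x)|^{p}=\lim_{R\to 0^{+}}\frac{1}{|B_{R}(x)|}\int_{B_{R}(x)}|f(y)|^{p}\,dy\quad\text{for a.e. }x\in\mathbb{R}^{n},
\]
from which $\Vert f\vert L_{\infty}\Vert\leq c_{n}^{-1/p}\Vert f\vert M_{\infty,p}\Vert$ follows. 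This yields $M_{\infty,p}=L_{\infty}$ with equivalence of norms.

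For (iii), $u=p$ forces the exponent to be $0$, so $\Vert f\vert M_{p,p}\Vert = \sup_{x,R}(\int_{B_{R}(x)}|f|^{p}\,dy)^{1/p}$. Since every integral appearing in the supremum is bounded by $\Vert f\vert L_{p}\Vert^{p}$, one direction is trivial; for the other, fix $x_{0}=0$ and let $R\to\infty$, invoking the monotone convergence theorem on the non-decreasing sequence $|f|^{p}\chi_{B_{R}(0)}\nearrow|f|^{p}$ to conclude $\Vert f\vert L_{p}\Vert\leq\Vert f\vert M_{p,p}\Vert$.

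The only nontrivial step is the lower bound in (ii), which hinges on the Lebesgue differentiation theorem; the other items reduce to monotonicity arguments in $R$ together with the fact that the scaling factor $R^{n(1/u-1/p)}$ either blows up (case (i)), is constant (case (iii)), or is exactly the normalization that turns integrals into averages (case (ii)).
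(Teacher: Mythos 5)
Your proof is correct and complete; the paper itself gives no argument here but merely cites \cite[Rem. 1.2.]{M. Rosenthal}, and the reasoning you supply (blow-up of the factor $R^{n(1/u-1/p)}$ for $u<p$, rewriting the case $u=\infty$ as a supremum of averages combined with the Lebesgue differentiation theorem, and monotone convergence for $u=p$) is exactly the standard argument behind that reference. No gaps.
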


\begin{proof}
We refer to \cite[Rem. 1.2.]{M. Rosenthal} for the proof.
\end{proof}
The last fact can be understood in the sense that Morrey spaces are
a generalization of Lebesgue spaces. Since the aim of the thesis is
to expand facts already known for the latter spaces, we shall henceforth
only consider the case $0<p\leq u$ unless stated elsewhere (although
we will mainly focus on $1<p\leq u$). We now proceed with some basic
relations among Morrey Spaces, which are directly taken from \cite[Thm. 1.3.]{M. Rosenthal}.
There the author refers to \cite[Rem. 1.2.;  Lem. 1.7.]{H. Kozono; M. Yamazaki}.
\begin{lem}
\label{lem:LemmaMorreySpaces}(i) $\left(M_{u,p}\vert\Vert\cdot\vert M_{u,p}\Vert\right)$
is a quasi-Banach Space.

(ii) For $0<p_{2}\leq p_{1}\leq u$, we have $M_{u,p_{1}}\hookrightarrow M_{u,p_{2}}$
and furthermore $L_{u}\hookrightarrow M_{u,p_{1}}$.

(iii) For $0<p<u$ we get $w-L_{u}\hookrightarrow M_{u,p}$.

(iv) For $0<p_{1}<u_{1}$ and $0<p_{2}<u_{2}$ let $\frac{1}{p_{1}}+\frac{1}{p_{2}}=:\frac{1}{p}$
and $\frac{1}{u_{1}}+\frac{1}{u_{2}}=:\frac{1}{u}$. If $u\in M_{u_{1},p_{1}}$
and $v\in M_{u_{2},p_{2}}$ we get $uv\in M_{u,p}$ or $\Vert uv\vert M_{u,p}\Vert\leq\Vert u\vert M_{u_{1},p_{1}}\Vert\Vert v\vert M_{u_{2},p_{2}}\Vert$
respectively.
\end{lem}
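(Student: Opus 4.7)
For (i), the plan is to verify the quasi-norm axioms (positive definiteness is immediate, positive homogeneity is obvious, and the quasi-triangle inequality follows from the standard one for $L_p$ on each ball), and then establish completeness by the usual two-step argument: given a Cauchy sequence $\{f_k\}$ in $M_{u,p}$, the bound
\[
\left(\int_{B_R(x)}|f_k(y)-f_j(y)|^p\,dy\right)^{1/p}\leq R^{n(1/p-1/u)}\Vert f_k-f_j\vert M_{u,p}\Vert
\]
shows that $\{f_k\}$ is Cauchy in $L_p(B_R(x))$ for every fixed ball. By completeness of $L_p$ on compacts, some representative $f\in L_p^{loc}$ is the limit on every ball; then Fatou's lemma applied under the supremum yields both $f\in M_{u,p}$ and $f_k\to f$ in $M_{u,p}$.

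For (ii), I would use Hölder's inequality (Lemma \ref{lem:HoelderUndMinkowski}) applied to $|f|^{p_2}\cdot 1$ with exponents $p_1/p_2$ and its dual on each fixed ball $B_R(x)$, which produces a factor $|B_R(x)|^{1/p_2-1/p_1}\sim R^{n(1/p_2-1/p_1)}$. Combining this factor with the prefactor $R^{n(1/u-1/p_2)}$ gives exactly $R^{n(1/u-1/p_1)}$ on the right-hand side, so the first embedding drops out. The second embedding $L_u\hookrightarrow M_{u,p_1}$ is the limiting case of the same computation (apply Hölder with exponents $u/p_1$ and its dual to bound the local $L_{p_1}$-norm by $|B_R(x)|^{1/p_1-1/u}\Vert f\vert L_u\Vert$); the $R$-powers again cancel and one gets the Morrey bound uniformly in $x$ and $R$.

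For (iii), the main obstacle lies here because we must genuinely exploit the weak-type decay. Writing $A:=\Vert f\vert w\text{-}L_u\Vert$, so that $|\{|f|>t\}|\leq A/t^u$, I express
\[
\int_{B_R(x)}|f(y)|^p\,dy=p\int_0^{\infty}t^{p-1}\,\bigl|\{y\in B_R(x):|f(y)|>t\}\bigr|\,dt,
\]
split the integral at a threshold $t_0>0$, bound the measure by $|B_R(x)|$ on $(0,t_0)$ and by $A/t^u$ on $(t_0,\infty)$ (using $p<u$ to integrate the tail), and optimise in $t_0$ by choosing $t_0=(A/|B_R(x)|)^{1/u}$. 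This yields $\int_{B_R(x)}|f|^p\lesssim A^{p/u}R^{n(1-p/u)}$, and after taking the $p$-th root and multiplying by $R^{n(1/u-1/p)}$ the $R$-dependence vanishes, giving $\Vert f\vert M_{u,p}\Vert\lesssim A^{1/u}$ uniformly in $x$ and $R$.

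For (iv), the strategy is a double application of Hölder. On each ball I first apply the ordinary Hölder inequality to $|uv|^p=|u|^p|v|^p$ with the conjugate pair $(p_1/p, p_2/p)$, which is admissible since $p/p_1+p/p_2=1$, to obtain
\[
\left(\int_{B_R(x)}|uv|^p\right)^{1/p}\leq\left(\int_{B_R(x)}|u|^{p_1}\right)^{1/p_1}\left(\int_{B_R(x)}|v|^{p_2}\right)^{1/p_2}.
\]
Then I multiply by $R^{n(1/u-1/p)}$ and use the additivity $1/u-1/p=(1/u_1-1/p_1)+(1/u_2-1/p_2)$ to distribute the prefactor between the two Morrey norms, then take the supremum over $x,R$. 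None of these steps poses a real difficulty; the conceptual heart of the lemma is part (iii), whose distribution-function optimisation I expect to be the main technical point.
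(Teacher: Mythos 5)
Your proposal is correct, and all four arguments (the local-Cauchy plus Fatou completeness proof, Hölder on each ball for (ii) and (iv), and the layer-cake splitting optimised at $t_0=(A/|B_R(x)|)^{1/u}$ for (iii)) are the standard ones; the paper itself gives no proof here but defers to \cite[Thm. 1.3.]{M. Rosenthal}, where essentially the same computations appear. The only cosmetic caveat is that in (iii) your bound $\Vert f\vert M_{u,p}\Vert\lesssim A^{1/u}$ correctly reflects the paper's unconventional normalisation of the weak norm (defined without the $1/u$-th power), so the homogeneity works out as you state.
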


\begin{proof}
The proofs can be found in \cite[Thm. 1.3.]{M. Rosenthal}.
\end{proof}
In Part \ref{part:WeightedMorreySpaces} we will check if these properties
also hold for weighted Morrey spaces. The main goal of this thesis
is trying to find out if the following result can also be shown if
we add one weight or two respectively. It was inspired by the works
of \cite{L.C. Piccinini} and refines \cite[Thm.2.1.]{L.C. Piccinini},
which deals with Morrey spaces defined on bounded domains. 
\begin{thm}
\label{thm:Sch=0000F6nesTheorem}Let $0<p_{1}\leq u_{1}<\infty$ and
$0<p_{2}\leq u_{2}<\infty$. If $M_{u_{1},p_{1}}\subset M_{u_{2},p_{2}}$,
it follows that $0<p_{2}\leq p_{1}\leq u_{1}=u_{2}$.
\end{thm}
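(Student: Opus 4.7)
The first step is to upgrade the set-theoretic inclusion to a continuous embedding: since $M_{u_1,p_1}$ and $M_{u_2,p_2}$ are quasi-Banach spaces by Lemma~\ref{lem:LemmaMorreySpaces}(i), an application of the Closed Graph Theorem yields a constant $C>0$ with
\[
\Vert f\vert M_{u_2,p_2}\Vert\le C\,\Vert f\vert M_{u_1,p_1}\Vert\qquad\text{for every }f\in M_{u_1,p_1}.
\]
All further steps then consist in testing this inequality on families of functions whose Morrey norms can be computed explicitly.

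For the equality $u_1=u_2$ the natural test family is that of characteristic functions $\chi_{B_R(0)}$ parametrised by $R>0$. A direct computation, splitting the defining supremum in the Morrey norm into the cases $r\le R$ (where $B_r(x)\subset B_R(0)$ is achievable and produces the scale $r^{n/u}$, maximised at $r=R$) and $r>R$ (where $|B_r(x)\cap B_R(0)|\le|B_R(0)|$ and the weight $r^{n(1/u-1/p)}$ is non-increasing because $u\ge p$), yields
\[
\Vert\chi_{B_R(0)}\vert M_{u,p}\Vert\sim R^{n/u}
\]
with implied constants depending only on $n$ and $u$, in particular independent of $p$. Substituting into the embedding inequality produces $R^{n/u_2}\le C\,R^{n/u_1}$ for every $R>0$; letting $R\to 0$ and $R\to\infty$ forces the two exponents to coincide, hence $u_1=u_2=:u$.

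To derive $p_2\le p_1$ I would reduce to the bounded-domain version already established in \cite[Thm.~2.1.]{L.C. Piccinini}. Fix a bounded domain $\Omega\subset\mathbb{R}^n$; given $f\in M_{u,p_1}(\Omega)$, the zero extension $\tilde f:=f\chi_\Omega$ lies in $M_{u,p_1}(\mathbb{R}^n)$ with norm comparable to $\Vert f\vert M_{u,p_1}(\Omega)\Vert$, since balls $B_r(x)$ disjoint from $\Omega$ contribute nothing, and for balls $B_r(x)$ with $r\gg\mathrm{diam}(\Omega)$ the Morrey contribution is dominated by $r^{n(1/u-1/p_1)}\Vert f\vert L_{p_1}(\Omega)\Vert$, which stays bounded because $u\ge p_1$. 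Applying the continuous embedding from the first step to $\tilde f$ yields $\tilde f\in M_{u,p_2}(\mathbb{R}^n)$; restricting back to $\Omega$ gives $f\in M_{u,p_2}(\Omega)$, so that $M_{u,p_1}(\Omega)\subset M_{u,p_2}(\Omega)$. Piccinini's bounded-domain theorem now forces $p_2\le p_1$, and together with $u_1=u_2$ and the standing hypothesis $p_1\le u_1$ this produces the claimed chain $0<p_2\le p_1\le u_1=u_2$. The main technical obstacle is the careful verification of the norm comparison for $\tilde f$ on balls that only partially overlap $\Omega$; an alternative, more hands-on route would be to build a direct counterexample on $\mathbb{R}^n$ from a multi-scale sum of translated and rescaled copies of the critical singularity $|x|^{-n/p_2}\chi_{B_1(0)}$, but this approach is markedly more delicate because that single-point singularity is no longer in $M_{u,p_1}$ as soon as $u>p_2$, so one must spread the $L_{p_2}$-bad behaviour over many disjoint scales in order to keep the $M_{u,p_1}$-norm finite while forcing the $M_{u,p_2}$-norm to diverge.
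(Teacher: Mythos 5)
Your argument is correct; note, however, that the paper itself offers no proof of this theorem beyond the citation of \cite[Thm. 1.6.]{M. Rosenthal}, so your proposal stands as an independent, essentially self-contained route rather than a reproduction. All three ingredients hold up. The closed graph theorem applies because both spaces are complete (Lemma \ref{lem:LemmaMorreySpaces}(i)) and both embed continuously into $L_{\min(p_1,p_2)}^{loc}$, so limits in the two quasi-norms must agree a.e.\ and the graph of the inclusion is closed. The computation $\Vert\chi_{B_R(0)}\vert M_{u,p}\Vert\sim R^{n/u}$ is exactly right --- in fact the norm equals $\left|B_{1}\left(0\right)\right|^{1/p}R^{n/u}$, so the constant does depend on $p$, but harmlessly since $p_{1},p_{2}$ are fixed --- and testing the bounded inclusion on this family is equivalent to the usual dilation argument $\Vert f\left(\lambda\cdot\right)\vert M_{u,p}\Vert=\lambda^{-n/u}\Vert f\vert M_{u,p}\Vert$; it correctly forces $u_{1}=u_{2}$. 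For $p_{2}\leq p_{1}$ your reduction to the bounded-domain case is legitimate and not circular (the present theorem is presented as a refinement of \cite[Thm. 2.1.]{L.C. Piccinini}, not a source for it), and the zero-extension estimate you single out as the technical obstacle does go through: if $B_{r}\left(x\right)$ meets $\Omega$, recentre at some $x'\in B_{r}\left(x\right)\cap\Omega$ and pass to $B_{2r}\left(x'\right)$, which costs only the factor $2^{n\left(1/p_{1}-1/u_{1}\right)}$ because the exponent $n\left(1/u_{1}-1/p_{1}\right)$ is non-positive, and balls with $r$ exceeding the diameter of $\Omega$ are controlled by $\Vert f\vert L_{p_{1}}\left(\Omega\right)\Vert$ for the same reason. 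The one point you should make explicit in a write-up is that you are invoking the \emph{necessity} half of Piccinini's bounded-domain characterization (that $M_{u,p_{1}}\left(\Omega\right)\subset M_{u,p_{2}}\left(\Omega\right)$ forces $p_{2}\leq p_{1}$); if one prefers a proof independent of that reference, the multi-scale lacunary counterexample you sketch at the end is the standard substitute, and your remark that the singular profile $\left|x\right|^{-n/p_{2}}\chi_{B_{1}\left(0\right)}$ must be spread over disjoint scales to keep the $M_{u,p_{1}}$-norm finite identifies precisely the delicate point of that construction.
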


\begin{proof}
The theorem and its proof can be found in \cite[Thm. 1.6.]{M. Rosenthal}.
\end{proof}
The hitherto considered cases did not involve the space $L_{\infty}$,
at which we will have a look at in the following lemma.\newpage{}
\begin{lem}
\label{lem:LemmaUnendlich}(i) If $M_{u,p}\subset L_{\infty}$ then
$u=\infty$.

(ii) If $M_{u,p}\supset L_{\infty}$ then $u=\infty$.
\end{lem}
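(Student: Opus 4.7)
Both assertions are most conveniently handled by contraposition: assume $u<\infty$ and exhibit a concrete function that disproves the claimed inclusion.

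\textbf{For (i)}, I would rely on the embedding $L_u \hookrightarrow M_{u,p}$ supplied by Lemma~\ref{lem:LemmaMorreySpaces}(ii) (recall the standing assumption $p\le u$). This reduces the problem to producing a single function in $L_u\setminus L_\infty$, which is routine when $u<\infty$. A concrete choice is
\[
f(x) \;:=\; |x|^{-\alpha}\chi_{B_1(0)}(x), \qquad 0<\alpha<\tfrac{n}{u}.
\]
Then $\int_{B_1(0)}|x|^{-\alpha u}\,dx$ converges, so $f\in L_u$, while $f$ is essentially unbounded near the origin, so $f\notin L_\infty$. By the embedding, $f\in M_{u,p}$, contradicting $M_{u,p}\subset L_\infty$. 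Hence $u=\infty$.

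\textbf{For (ii)}, I would test the inclusion $L_\infty\subset M_{u,p}$ on the constant function $f\equiv 1$, which manifestly lies in $L_\infty$. A direct computation from the definition of the Morrey quasi-norm gives
\[
\Vert 1 \,\vert\, M_{u,p}\Vert \;=\; \sup_{x\in\mathbb{R}^n,\,R>0} R^{n(1/u-1/p)}\,|B_R(x)|^{1/p} \;=\; c_n\,\sup_{R>0} R^{n/u},
\]
and the right-hand side is infinite whenever $u<\infty$. Consequently $1\notin M_{u,p}$, contradicting $L_\infty\subset M_{u,p}$, and we again conclude $u=\infty$.

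Neither argument poses any genuine obstacle; the only small point of care is the construction in (i), where one must pick a function that simultaneously lies in $L_u$ (so that the embedding from Lemma~\ref{lem:LemmaMorreySpaces}(ii) applies) and fails to be essentially bounded. The truncated power $|x|^{-\alpha}\chi_{B_1(0)}$ is the simplest such choice, after which everything reduces to two elementary integrals.
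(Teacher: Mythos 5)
Your argument is correct: part (i) follows from $L_u\hookrightarrow M_{u,p}$ (Lemma \ref{lem:LemmaMorreySpaces}(ii), valid under the standing assumption $p\le u$) applied to the truncated power $|x|^{-\alpha}\chi_{B_1(0)}$ with $0<\alpha<n/u$, and part (ii) from the explicit computation $\Vert 1\,\vert\,M_{u,p}\Vert=c_n\sup_{R>0}R^{n/u}=\infty$ for $u<\infty$. The thesis itself only cites \cite[Remark 1.7.]{M. Rosenthal} for this lemma, and your two test functions are exactly the standard ones used there, so there is nothing to add.
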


\begin{proof}
See \cite[Remark 1.7.]{M. Rosenthal} for the proof. 
\end{proof}
This lemma together with Theorem \ref{thm:Sch=0000F6nesTheorem} leads
to the following, very powerful result when it comes to embeddings
of unweighted Morrey spaces.
\begin{cor}
\label{cor:Sch=0000F6nesKorollar}We have $M_{u_{1},p_{1}}\subset M_{u_{2},p_{2}}$
if and only if $0<p_{2}\leq p_{1}\leq u_{1}=u_{2}\leq\infty$.
\end{cor}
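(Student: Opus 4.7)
The plan is to obtain the corollary as a clean case split that stitches together Theorem \ref{thm:Sch=0000F6nesTheorem} with Lemma \ref{lem:LemmaUnendlich}, since neither of those results alone covers all admissible pairs $\left(u_i,p_i\right)$ once the value $\infty$ is permitted for the $u_i$.

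For the sufficiency direction I would assume $0 < p_2 \leq p_1 \leq u_1 = u_2 \leq \infty$ and separate the subcases $u := u_1 = u_2 < \infty$ and $u = \infty$. In the finite subcase, Lemma \ref{lem:LemmaMorreySpaces}(ii) delivers $M_{u,p_1}\hookrightarrow M_{u,p_2}$ at once, since the hypothesis $0 < p_2 \leq p_1 \leq u$ is exactly what has been assumed. In the infinite subcase Lemma \ref{lem:(Embedding C)}(ii) identifies both Morrey spaces with $L_\infty$ up to equivalent norms, so the inclusion becomes trivial.

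For the necessity direction I would assume $M_{u_1,p_1}\subset M_{u_2,p_2}$ and distinguish cases by finiteness of the $u_i$. When both $u_1,u_2<\infty$, Theorem \ref{thm:Sch=0000F6nesTheorem} applies verbatim and yields $0<p_2\leq p_1\leq u_1=u_2$. When $u_1=\infty$, Lemma \ref{lem:(Embedding C)}(ii) identifies $M_{u_1,p_1}$ with $L_\infty$, so the assumed embedding gives $L_\infty\subset M_{u_2,p_2}$, and Lemma \ref{lem:LemmaUnendlich}(ii) forces $u_2=\infty$. Symmetrically, if $u_2=\infty$ and $u_1<\infty$, then $M_{u_1,p_1}\subset L_\infty$, and Lemma \ref{lem:LemmaUnendlich}(i) forces $u_1=\infty$, contradicting the case assumption. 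Thus in every case $u_1=u_2$, and the ordering $p_2\leq p_1$ in the finite subcase is precisely what Theorem \ref{thm:Sch=0000F6nesTheorem} supplies.

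The main subtle point I anticipate is the boundary situation $u_1=u_2=\infty$: here every Morrey space in sight already collapses to $L_\infty$ regardless of the $p_i$, so the conclusion $p_2\leq p_1$ is trivially satisfiable and the statement should be read with that convention in mind. Apart from this caveat, the corollary is essentially a bookkeeping exercise combining the two preceding results with the identification $M_{\infty,p}=L_\infty$; no new analytic ingredient is needed beyond what has already been established.
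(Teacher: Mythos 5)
Your proposal is correct and is precisely the combination of Theorem \ref{thm:Sch=0000F6nesTheorem} with Lemma \ref{lem:LemmaUnendlich} (plus Lemma \ref{lem:(Embedding C)}(ii) and Lemma \ref{lem:LemmaMorreySpaces}(ii) for sufficiency) that the paper itself invokes, just written out in full; your remark about the degenerate case $u_1=u_2=\infty$, where both spaces collapse to $L_\infty$ and the ordering $p_2\leq p_1$ is not actually forced, is a fair observation about how the statement must be read rather than a defect of the argument.
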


\begin{proof}
The corollary is a combination of Theorem \ref{thm:Sch=0000F6nesTheorem}
and Lemma \ref{lem:LemmaUnendlich} and can also be found in \cite[Cor. 1.8.]{M. Rosenthal}.
\end{proof}
Since Morrey spaces are a certain generalization of $L_{p}$ spaces,
it is only natural to wonder how they relate to one another and if
they can be compared. The following theorem gives some insight on
that.
\begin{thm}
\label{thm:unweightedEmbedding}For $0<p<u<\infty$ we have the following
relations
\[
L_{u}\hookrightarrow w-L_{u}\hookrightarrow M_{u,p}\hookrightarrow L_{p}^{loc},
\]
where every embedding is strict.
\end{thm}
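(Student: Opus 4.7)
The statement is really three continuous embeddings, each of which must additionally be shown to be strict. I would handle them in order, exploiting results already established in the excerpt whenever possible.

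For $L_{u}\hookrightarrow w-L_{u}$ the continuous embedding is a direct application of Chebyshev's inequality: for any $t>0$,
\[
t^{u}\mu\left(\left\{ \left|f\right|>t\right\} \right)\leq\int_{\left\{ \left|f\right|>t\right\} }\left|f(x)\right|^{u}dx\leq\Vert f\vert L_{u}\Vert^{u},
\]
and taking the supremum over $t>0$ on the left gives $\Vert f\vert w-L_{u}\Vert\leq\Vert f\vert L_{u}\Vert^{u}$. For strictness I would verify by a direct computation in spherical coordinates that $f(x)=\left|x\right|^{-n/u}$ lies in $w-L_{u}$ but not in $L_{u}$.

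For $w-L_{u}\hookrightarrow M_{u,p}$ the continuous embedding has already been recorded as Lemma \ref{lem:LemmaMorreySpaces}(iii), so I would simply refer to it. To see that the inclusion is strict, I would construct a function with well-separated singularities, for example
\[
f(x)=\sum_{k=1}^{\infty}\left|x-c_{k}\right|^{-n/u}\chi_{B_{1}\left(c_{k}\right)}(x),\qquad c_{k}=2^{k}e_{1}.
\]
The pairwise disjointness of the balls $B_{1}(c_{k})$ together with the exponential spacing of the centers ensure that any ball $B_{R}(x_{0})$ meets only $O(\log R)$ of them, and on each such ball the integral behaves like the one for $\left|x\right|^{-n/u}$; one checks in this way that the scaling factor $R^{n(1/u-1/p)}$ keeps the Morrey quasi-norm bounded for both small and large $R$, so $f\in M_{u,p}$. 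At the same time, because the sum has infinitely many disjoint copies of the same singular profile, the distribution function $\mu(\{|f|>t\})$ is infinite for every $t>0$, hence $f\notin w-L_{u}$.

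For $M_{u,p}\hookrightarrow L_{p}^{loc}$ I would fix a compact set $K\subset\mathbb{R}^{n}$, cover it by finitely many balls of a common radius $R>0$, and apply the definition of $\Vert\cdot\vert M_{u,p}\Vert$ to each one, producing a bound of the form $\Vert f\vert L_{p}(K)\Vert\leq C(K,R)\,\Vert f\vert M_{u,p}\Vert$. Strictness is immediate once one observes that the constant function $f\equiv 1$ belongs to $L_{p}^{loc}$ while its Morrey quasi-norm equals $c_{n}^{1/p}\sup_{R>0}R^{n/u}=\infty$ for every $u<\infty$. The main obstacle is the strictness of the middle embedding: verifying simultaneously that the concrete function above lies in $M_{u,p}$ (requiring the analysis to be split according to whether $B_{R}(x_{0})$ meets one, several, or none of the singular centres) and that it fails to be in $w-L_{u}$ is where the real bookkeeping lives; the other two strictness statements are one-line remarks.
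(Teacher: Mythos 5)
Your proposal is correct, and it supplies an actual argument where the paper itself only defers to \cite[Cor.~1.9.]{M. Rosenthal}: Chebyshev's inequality for the first embedding, the already-recorded Lemma \ref{lem:LemmaMorreySpaces}(iii) for the second, and a finite covering of a compact set for the third, with $\left|x\right|^{-n/u}$, a sum of disjointly supported translated singularities, and $f\equiv1$ witnessing strictness --- this is exactly the standard route taken in the cited source. The only step that genuinely needs the care you flag is the middle counterexample, and your choice of exponentially spaced centres $c_{k}=2^{k}e_{1}$ is what makes it work: it gives the $O\left(\log R\right)$ count of singular balls met by $B_{R}\left(x_{0}\right)$ (a merely linear count would not suffice for small $n$), so that $R^{n\left(1/u-1/p\right)}$ absorbs the logarithm, while the infinitely many disjoint copies of the same profile force $\mu\left(\left\{ \left|f\right|>t\right\} \right)=\infty$ for every $t>0$.
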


\begin{proof}
Again, we refer to \cite[Cor. 1.9.]{M. Rosenthal} for the proof.
\end{proof}
As a last result in this part, we will have a look at a more technically
interesting fact, that might be useful later on. It has been taken
from \cite[Rem. 1.10.]{M. Rosenthal} and goes back to \cite[Prop. 1.3. ;  Lem. 1.6.]{H. Kozono; M. Yamazaki}.
\begin{lem}
(i) If $0<p\leq u$ and $0<r\leq p$ and $f\in M_{u,p}$ then\\
\[
\vert f\vert^{r}\in M_{\frac{u}{r},\frac{p}{r}}\mbox{\,\ and\,\ }\Vert\vert f\vert^{r}\vert M_{\frac{u}{r},\frac{p}{r}}\Vert=\Vert f\vert M_{u,p}\Vert^{r}.
\]
(ii) Suppose we have $0<p\leq u$, $1\leq m<n,$ $y:=\left(y^{1},y^{2}\right)$
where $y^{1}=\left(y_{1,\ldots,}y_{m}\right)$ and $y^{2}=\left(y_{m+1},\ldots y_{n}\right)$.
If $f(y^{1})\in M_{u,p}\left(\mathbb{R}^{m}\right)$ then 
\[
f\in M_{\frac{nq}{m},p}\left(\mathbb{R}^{n}\right),
\]
if we treat $f$ as a function on $\mathbb{R}^{n}$.
\end{lem}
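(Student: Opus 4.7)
For part (i), the plan is a direct computation from the definition. The hypotheses $0<r\leq p\leq u$ guarantee $p/r\geq 1$ and $p/r\leq u/r$, so $M_{u/r,p/r}$ is a legitimate Morrey space in the sense of this chapter. Writing out the norm
\[
\Vert \vert f\vert^{r}\vert M_{u/r,p/r}\Vert=\sup_{x\in\mathbb{R}^{n},R>0}R^{n(r/u-r/p)}\left(\int_{B_{R}(x)}\left(\vert f(y)\vert^{r}\right)^{p/r}dy\right)^{r/p},
\]
the identity $(\vert f\vert^{r})^{p/r}=\vert f\vert^{p}$ together with $n(r/u-r/p)=r\cdot n(1/u-1/p)$ allows the outer exponent $r$ to be pulled out of the supremum, which is permissible because the expression inside is nonnegative. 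This yields exactly $\Vert f\vert M_{u,p}\Vert^{r}$, giving both the claimed membership and the equality of norms in one stroke.

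For part (ii) I assume the printed $q$ is a misprint for $u$, so the claim reads $f\in M_{nu/m,p}(\mathbb{R}^{n})$. The essential geometric observation is that for any ball $B_{R}(x)\subset\mathbb{R}^{n}$ with center $x=(x^{1},x^{2})\in\mathbb{R}^{m}\times\mathbb{R}^{n-m}$, one has the inclusion $B_{R}(x)\subset B_{R}^{m}(x^{1})\times B_{R}^{n-m}(x^{2})$, where the superscripts indicate the ambient dimension. Since $f$ depends only on $y^{1}$, Fubini's theorem produces
\[
\int_{B_{R}(x)}\vert f(y^{1})\vert^{p}dy\leq c_{n-m}\, R^{n-m}\int_{B_{R}^{m}(x^{1})}\vert f(y^{1})\vert^{p}dy^{1},
\]
with $c_{n-m}$ the volume of the unit ball in $\mathbb{R}^{n-m}$.

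Next I would apply the assumption $f\in M_{u,p}(\mathbb{R}^{m})$ in the form
\[
\left(\int_{B_{R}^{m}(x^{1})}\vert f(y^{1})\vert^{p}dy^{1}\right)^{1/p}\leq R^{m(1/p-1/u)}\,\Vert f\vert M_{u,p}(\mathbb{R}^{m})\Vert,
\]
combine the two bounds, and collect the powers of $R$: the resulting exponent is $(n-m)/p+m(1/p-1/u)=n/p-m/u$. Multiplying by the Morrey weight of the target space, namely $R^{n(m/(nu)-1/p)}=R^{m/u-n/p}$, makes all $R$-dependence cancel, producing a bound uniform in $x$ and $R$ which gives the desired quasi-norm estimate $\Vert f\vert M_{nu/m,p}(\mathbb{R}^{n})\Vert\leq C\,\Vert f\vert M_{u,p}(\mathbb{R}^{m})\Vert$.

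The main obstacle is identifying the correct parameter matching in part (ii): one must recognize that the choice of first index $nu/m$ is precisely what makes the extra factor $R^{n-m}$ coming from the $(n-m)$-dimensional slices combine with the Morrey scaling $R^{m(1/p-1/u)}$ in $\mathbb{R}^{m}$ into the scaling required in $\mathbb{R}^{n}$. Once the exponent arithmetic is set up, both assertions reduce to a careful but routine bookkeeping of powers of $R$.
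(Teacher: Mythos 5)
Your argument is correct, and since the paper itself only cites \cite[Rem. 1.10.]{M. Rosenthal} (which in turn goes back to Kozono--Yamazaki) without reproducing a proof, you have in effect supplied the standard argument that those references use: for (i) the identity $(\vert f\vert^{r})^{p/r}=\vert f\vert^{p}$ plus monotonicity of $t\mapsto t^{r}$ to pull the power through the supremum, and for (ii) the inclusion of $B_{R}(x)$ in the product $B_{R}^{m}(x^{1})\times B_{R}^{n-m}(x^{2})$ followed by Fubini and the exponent bookkeeping $(n-m)/p+m(1/p-1/u)=n/p-m/u$. Your reading of the misprint $q$ as $u$ is the right one (the $q$ is a relic of the source's notation for the secondary Morrey index), and the target index $nu/m$ is exactly what makes the powers of $R$ cancel; note also that $m<n$ gives $nu/m>u\geq p$, so the target space is nontrivial.
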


\begin{proof}
Once more, the proof can be found in \cite[Rem. 1.10.]{M. Rosenthal}.
\end{proof}
\newpage{}

\part{\label{part:MuckenhouptWeights}Muckenhoupt Weights}

The next step is to have a closer look at a certain class of weights,
first introduced to sum up all weights $\omega$ for which the Hardy-Littlewood
maximal operator a bounded operator from $L_{p}\left(\omega\right)$
to itself. This class of weights was first introduced by Benjamin
Muckenhoupt in \cite{B. Muckenhoupt}. In Part \ref{part:WeightedMorreySpaces}
we will see that they can also be used to prove embeddings of certain
weighted Morrey spaces. Throughout the whole section we will follow
\cite[Ch. V; §§1,3,5]{E.M. Stein} very closely. 

\section{Definition and Basic Properties}
\begin{defn}
\label{Def: Muckenhoupt}If $1<p<\infty$ is fixed, we say that the
function $\omega:\mathbb{R}^{n}\rightarrow[0,\infty)$ belongs to
the class of\emph{ }Muckenhoupt weights $A_{p}$ if $\omega\in L_{1}^{loc}\left(\mathbb{R}^{n}\right)$,
if it is absolutely continuous $d\mu(x)=\omega(x)dx$ with respect
to the Lebesgue measure $\mu$ and if there is $A>0$ such that $\omega$
satisfies the inequality

\begin{equation}
\left[\frac{1}{\vert B\vert}\int_{B}\omega\left(x\right)dx\right]\cdot\left[\frac{1}{\vert B\vert}\int_{B}\omega\left(x\right)^{-p'/p}dx\right]^{p/p'}\leq A<\infty\label{eq:MuckenHouptDef}
\end{equation}
for all balls $B\subset\mathbb{R}^{n}$. The smallest constant $A$
satisfying this inequality is denoted by $A_{p}(\omega)$, which allows
us to identify all $\omega$ within the class $A_{p}$.

This definition and the following results have been taken from \cite[Ch. V; §1]{E.M. Stein}.
We summarize the bits we need in the next lemma.
\end{defn}

\begin{lem}
\label{thm:BasicApFacts}(i) If $\omega$ belongs to $A_{p}$, then
so do its dilates $\omega_{\delta}\left(x\right)=\omega\left(\delta x\right)$,
its translates $\omega_{h}=\omega\left(\cdot-h\right)$ or its multiplications
with a positive scalar $\lambda\omega\left(\cdot\right)$.

(ii) If $w\in A_{p}$ then $\sigma=\omega^{-p'/p}$ belongs to $A_{p'}$.
Hence $A_{p'}\left(\sigma\right)^{1/p'}=A_{p}\left(\omega\right)^{1/p}$. 

(iii) The classes $A_{p}$ are increasing with $p$, i.e. for $p_{1}<p_{2}$
we get $A_{p_{1}}\subset A_{p_{2}}$.
\end{lem}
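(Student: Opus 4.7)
The plan is to verify the three claims by direct computations from the defining inequality \eqref{eq:MuckenHouptDef}. Parts (i) and (ii) rely on elementary changes of variable and on a symmetry of the defining product under the transformation $\omega \mapsto \omega^{-p'/p}$; part (iii) needs one application of Hölder's inequality in the second factor of the product and is the only step that requires some care with the exponents.

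For (i), substituting $y = \delta x$ in both integrals of the $A_{p}$-product for $\omega_{\delta}$ converts it into the same product for $\omega$ over the dilated ball $\delta B$: the Jacobian factor $\delta^{n}$ cancels with the two normalisations $|B|^{-1}$, so $A_{p}(\omega_{\delta}) = A_{p}(\omega)$. Translations are immediate since balls are mapped to balls of the same radius. For $\lambda\omega$ with $\lambda>0$, the factor $\lambda$ pulls out of the first average and $\lambda^{-p'/p}$ out of the second, and after raising to $p/p'$ the two contributions cancel.

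For (ii) I would use the two identities $(p')' = p$ and $\sigma^{-p/p'} = \omega^{(-p'/p)(-p/p')} = \omega$. Inserting these into the $A_{p'}$-product for $\sigma$ turns it into
\[
\left[\frac{1}{|B|}\int_{B}\omega^{-p'/p}\,dx\right]\left[\frac{1}{|B|}\int_{B}\omega\,dx\right]^{p'/p},
\]
whose $1/p'$-th power coincides with the $1/p$-th power of the $A_{p}$-product for $\omega$ evaluated on the same ball. Taking the supremum over $B$ yields both $\sigma\in A_{p'}$ and the identity $A_{p'}(\sigma)^{1/p'} = A_{p}(\omega)^{1/p}$.

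For (iii), given $p_{1}<p_{2}$ and $\omega\in A_{p_{1}}$, I would estimate the second factor $\bigl(|B|^{-1}\int_{B}\omega^{-p_{2}'/p_{2}}\,dx\bigr)^{p_{2}/p_{2}'}$ of the $A_{p_{2}}$-product by its $A_{p_{1}}$-analogue. Since $p'/p = 1/(p-1)$ is strictly decreasing in $p$, the ratio $q := (p_{1}'/p_{1})(p_{2}/p_{2}') = (p_{2}-1)/(p_{1}-1)$ satisfies $q>1$, and Hölder's inequality with exponents $(q,q')$ applied to the product $\omega^{-p_{2}'/p_{2}}\cdot 1$ on $B$ gives
\[
\frac{1}{|B|}\int_{B}\omega^{-p_{2}'/p_{2}}\,dx \;\leq\; \left(\frac{1}{|B|}\int_{B}\omega^{-p_{1}'/p_{1}}\,dx\right)^{1/q}.
\]
Raising to $p_{2}/p_{2}'$ and using $p_{2}/(p_{2}'q)=p_{1}/p_{1}'$, the $A_{p_{2}}$-product is dominated by the $A_{p_{1}}$-product of $\omega$, whence $A_{p_{2}}(\omega)\leq A_{p_{1}}(\omega)<\infty$. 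The main obstacle in the entire lemma lies precisely in keeping track of these exponents in (iii); identifying the correct Hölder pair as the ratio of the two $p'/p$-exponents reduces the argument to a short calculation.
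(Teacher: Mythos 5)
Your proof is correct and follows essentially the same route as the paper's (much sketchier) argument: change of variables for (i), the symmetry of the defining product under $\omega\mapsto\omega^{-p'/p}$ for (ii), and Hölder's inequality applied to the second factor for (iii), where your exponent $q=(p_{2}-1)/(p_{1}-1)$ is exactly the paper's $\frac{p_{1}'p_{2}}{p_{1}p_{2}'}$. Your write-up supplies the exponent bookkeeping that the paper omits, including the bonus observation $A_{p_{2}}(\omega)\leq A_{p_{1}}(\omega)$, which the paper also records.
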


\begin{proof}
(i) The result easily follows by substituting the variable and since
inequality \eqref{eq:MuckenHouptDef} must hold for all balls $B\subset\mathbb{R}^{n}$.

(ii) This fact is obvious, if we keep in mind that $1<p<\infty$ and
that $p'$ is the dual of $p$ as well as $\left(p'\right)'=p$. Roughly
speaking we reverse the order of the left and right hand terms in
\eqref{eq:MuckenHouptDef}. We should also denote that in the case
of $p=2$ the result is particularly elegant, i.e.

\[
\left[\frac{1}{\vert B\vert}\int_{B}\omega\left(x\right)dx\right]\cdot\left[\frac{1}{\vert B\vert}\int_{B}\omega\left(x\right)^{-1}dx\right]\leq A.
\]

(iii) Assume that $p_{1}<p_{2}$ and $\omega\in A_{p_{1}}$. It follows
directly as a consequence of Definition \ref{Def: Muckenhoupt} and
Hölder's inequality, which we apply to the right term of the just
mentioned definition with $p=\frac{p_{1}'p_{2}}{p_{1}p_{2}'}$ . We
also need the fact that $q_{2}<q_{1}$ if we define $q_{j}=\frac{p_{j}'}{p_{j}}$.
By that we naturally also get $A_{p_{2}}\left(\omega\right)\leq A_{p_{1}}\left(\omega\right)$.
\end{proof}
\newpage{}
\begin{defn}
(i) A weight $\omega$ belongs to the Muckenhoupt class $A_{1}$ if
there exists a constant $0<A<\infty$ such that 
\[
\frac{1}{\left|B\right|}\int_{B}\omega\left(y\right)dy\leq A\omega\left(x\right)
\]
holds for almost every $x\in\mathbb{R}^{n}$ and all balls $B$. 

(ii) The Muckenhoupt class $A_{\infty}$ is defined by
\[
A_{\infty}=\bigcup_{p\geq1}A_{p}
\]
The following remark and \cite[Def. 2.1.]{D. Haroske; I. Piotrowska}
justify this definition of $A_{1}$.
\end{defn}

\begin{rem}
\label{Remark Definition A_1}Since we know that the spaces $A_{p}$
are increasingly embedded for every $1<p<\infty$, it's only natural
to wonder what is happening to $A_{p}$ if $p\rightarrow1$. We notice
that the second term $\left[\frac{1}{\vert B\vert}\int_{B}\left(\omega\left(x\right)\right)^{-p'/p}dx\right]^{p/p'}\rightarrow\Vert\omega^{-1}\vert L_{\infty}\left(B\right)\Vert$
as $p\rightarrow1$, because it is bounded from above by $\Vert\omega^{-1}\vert L_{\infty}\left(B\right)\Vert$
and the norms $\Vert\omega^{-1}\vert L_{\frac{p'}{p}}\left(B\right)\Vert$
are monotonically increasing in $p$ for fixed balls $B$. However,
this essentially means
\begin{equation}
\frac{1}{\vert B\vert}\int_{B}\omega\left(y\right)dy\leq A\omega\left(x\right)\,\,\mbox{for almost every \,}x\in B.\label{eq:A_1 Absch=0000E4tzung}
\end{equation}
Furthermore we have $\left[\frac{1}{\vert B\vert}\int_{B}\omega\left(x\right)^{-p'/p}dx\right]^{p/p'}\leq\Vert\omega^{-1}\vert L_{\infty}\left(B\right)\Vert$
for all balls $B$. Hence $A_{1}\subset A_{p}$ for $p>1$, so that
the increasing embedding also holds.
\end{rem}

\begin{example}
\label{example1}Let us now have a look at basic examples for Muckenhoupt
weights.

(i) The simplest of all is $\omega^{*}\equiv1$, for which we obviously
see that $\omega^{*}\in A_{p}$ for all $1\leq p\leq\infty$. 

(ii) The next example is simply given by $\omega_{\alpha}\left(x\right)=\left|x\right|^{\alpha}$,
which belongs to a Muckenhoupt class $A_{p}$ if and only if $-n<\alpha<n\left(p-1\right)$,
$1<p<\infty$. The proof is given by the special case $\alpha=\beta$
of the next example.

(iii) The last example we will mention here, is given by
\[
\omega_{\alpha,\beta}\left(x\right)=\begin{cases}
\left|x\right|^{\alpha} & \mbox{, }x<1\\
\left|x\right|^{\beta} & \mbox{, }x\geq1
\end{cases}\mbox{ for }\alpha,\beta\in\mathbb{R},\,x\in\mathbb{R}^{n}.
\]
Then $\omega_{\alpha,\beta}\in A_{p}$ if and only if $-n<\alpha,\beta<n\left(p-1\right)$,
$1<p<\infty$.
\end{example}

\begin{proof}
This elaborate proof of the statement is taken from \cite[1.3.2. Examples]{F. Baaske }. 

``$\Longrightarrow$'' Let $1<p<\infty$ and $\alpha,\beta\in\mathbb{R}$.
We begin by assuming that $\omega_{\alpha,\beta}\in A_{p}$. This
means, that there is a constant $A>0$ such that 

\begin{equation}
\left(\frac{1}{\left|B\right|}\int_{B}\omega_{\alpha,\beta}\left(x\right)dx\right)^{1/p}\left(\frac{1}{\left|B\right|}\int_{B}\omega_{\alpha,\beta}\left(x\right)^{-p'/p}dx\right)^{1/p'}\leq A<\infty,\label{eq:ThisIsPesh}
\end{equation}
for all balls $B$. In particular this includes the ball $B_{R}\left(0\right)$
for $R>0$. First let $0<R<1$. \newpage By also considering the Jacobian
the left hand integral becomes

\[
\int_{B}\omega_{\alpha,\beta}\left(x\right)dx\sim\int_{0}^{R}r^{\alpha+n-1}dr.
\]
We see that for $\alpha=-n$ the integral diverges and since we are
dealing with $R<1$ the same would apply for $\alpha<-n$, because
then $\int_{0}^{R}r^{\alpha+n-1}dr=R^{-\left|\alpha+n\right|}\rightarrow\infty$
as $R\rightarrow0$, leaving us with the condition $\alpha>-n$ as
the only possibly choice for the above integral to be finite. Similarly
we get the right hand integral in \eqref{eq:ThisIsPesh} as 
\[
\int_{B}\omega_{\alpha,\beta}\left(x\right)^{-p'/p}dx\sim\int_{0}^{R}r^{-\alpha\frac{p'}{p}+n-1}dr.
\]
Here the integral obviously diverges for $\alpha=n\left(p-1\right)$.
For $\alpha>n\left(p-1\right)$ the integral becomes $\int_{0}^{R}r^{-\alpha\frac{p'}{p}+n-1}dr=R^{-\left|\alpha\frac{p'}{p}+n\right|}\rightarrow\infty$
as $R\rightarrow0$. This leaves $\alpha<n\left(p-1\right)$ as the
only admissible case in which the integral is finite. If we combine
both conditions, we are left with a certain span for $\alpha$, which
is $-n<\alpha<n\left(p-1\right)$. For this configuration we are left
with
\[
\mbox{\eqref{eq:ThisIsPesh}}\sim R^{-n}R^{\left(\alpha+n\right)/p}R^{\left(-\alpha\frac{p'}{p}+n\right)/p'}=R^{0}<\infty,
\]
thus $R<1$ and $\omega_{\alpha,\beta}\in A_{p}$ both imply $-n<\alpha<n\left(p-1\right)$. 

We proceed by having a look at the case $R>1$, while we still assume
that $\alpha$ satisfies $-n<\alpha<n\left(p-1\right)$. Again, we
will have a look at the ball of radius $R$ centered at the origin
and calculate the integrals to get 
\begin{eqnarray*}
\int_{B}\omega_{\alpha,\beta}\left(x\right)dx & = & \int_{\vert x\vert<1}\omega_{\alpha,\beta}\left(x\right)dx+\int_{1<\vert x\vert<R}\omega_{\alpha,\beta}\left(x\right)dx\\
 & \sim & \int_{0}^{1}r^{\alpha+n-1}dr+\int_{1}^{R}r^{\beta+n-1}dr\\
 & \sim & 1+\begin{cases}
R^{\beta+n}-1 & \mbox{, }\beta>-n\\
1-R^{-\left|\beta+n\right|} & \mbox{, }\beta<-n\\
\log R & \mbox{, }\beta=-n
\end{cases}\\
 & \sim & \begin{cases}
R^{\beta+n} & \beta>-n\\
1 & \beta<-n\\
\log R & \beta=-n
\end{cases},
\end{eqnarray*}
and likewise
\[
\int_{B}\omega_{\alpha,\beta}\left(x\right)^{-p'/p}dx\sim\begin{cases}
R^{-\beta\frac{p'}{p}+n} & \beta<n\left(p-1\right)\\
1 & \beta>n\left(p-1\right)\\
\log R & R=n\left(p-1\right)
\end{cases}.
\]

We can sum up these results as follows: Let $B$ be a ball and define 

\[
I:=\left(\frac{1}{\left|B\right|}\int_{B}\omega_{\alpha,\beta}\left(x\right)dx\right)^{1/p}\left(\frac{1}{\left|B\right|}\int_{B}\omega_{\alpha,\beta}\left(x\right)^{-p'/p}dx\right)^{1/p'}
\]

\newpage then we get the following $5$ cases for different choices
of $\beta$.\\
\\
Case \#1: If $\beta>n\left(p-1\right)>-n$, \\
then $I\sim R^{-n}R^{\frac{\beta+n}{p}}=R^{\frac{1}{p}\left(\beta-n\left(p-1\right)\right)}\rightarrow\infty$
as $R\rightarrow\infty$.\\
\\
Case \#2: If $\beta=n\left(p-1\right)>-n$, \\
then $I\sim R^{-n}R^{\frac{\beta+n}{p}}\left(\log R\right)^{\frac{1}{p'}}=\left(\log R\right)^{\frac{1}{p'}}\rightarrow\infty$
as $R\rightarrow\infty$.\\
\\
Case \#3: If $n\left(p-1\right)>\beta>-n$, \\
then $I\sim R^{-n}R^{\frac{\beta+n}{p}}R^{\left(-\beta\frac{p'}{p}+n\right)\frac{1}{p'}}=R^{0}=1$\\
\\
Case \#4: If $n\left(p-1\right)>\beta=-n$, \\
then $I\sim R^{-n}\left(\log R\right)^{\frac{1}{p}}R^{-\frac{\beta}{p}+\frac{n}{p'}}=\left(\log R\right)^{\frac{1}{p}}\rightarrow\infty$
as $R\rightarrow\infty.$\\
\\
Case \#5: If $n\left(p-1\right)>-n>\beta$, \\
then $I\sim R^{-n}R^{-\frac{\beta}{p}+\frac{n}{p'}}=R^{-\frac{\beta+n}{p}}\rightarrow\infty$
as $R\rightarrow\infty.$\\

However this means that the only admissible $\beta$ are found in
Case 3, concluding one direction of the proof.\\

``$\Longleftarrow$'' Let $-n<\alpha,\beta<n\left(p-1\right)$.
Now we have to find a constant such that the supremum over all balls
$B$ of $I$ is bounded from above. We already saw in the first step
of the proof that balls, centered at $0$ of arbitrary radius $R$
are bounded for this choice of parameters $\alpha$ and $\beta$.
So let $B=B_{R}\left(x_{0}\right)$ for $R>0$ and $x_{0}\in\mathbb{R}^{n}$.
We will divide the proof in three parts, beginning with\\

\emph{Part 1:} $\left|x_{\text{0}}\right|<\frac{R}{2}$.

For this choice the inclusion $B_{\frac{1}{2}R}\left(0\right)\subset B_{R}\left(x_{0}\right)\subset B_{\frac{3}{2}R}\left(0\right)$
holds, making it possible to estimate the integral from both below
and above by 
\begin{eqnarray}
 &  & \frac{c}{\left|B_{\frac{1}{2}R}\left(0\right)\right|}\int_{B_{\frac{1}{2}R}\left(0\right)}\omega_{\alpha,\beta}\left(x\right)dx\nonumber \\
 & < & \frac{1}{\left|B_{R}\left(x_{0}\right)\right|}\int_{B_{R}\left(x_{0}\right)}\omega_{\alpha,\beta}\left(x\right)dx\label{eq:BallIntegralUngleichung}\\
 & < & \frac{C}{\left|B_{\frac{3}{2}R}\left(0\right)\right|}\int_{B_{\frac{3}{2}}\left(0\right)}\omega_{\alpha,\beta}\left(x\right)dx\nonumber 
\end{eqnarray}
Now for $R<\frac{2}{3}$ we have $\frac{1}{2}R<\frac{3}{2}R<1$ and
for $R>2$ we have $\frac{3}{2}R>\frac{1}{2}R>1$ respectively. Thus
the same arguments as for balls centered at $0$ hold, meaning we
know that the supremum over all such balls $B$ of $I$ is bounded
from above (as we have shown in the first step of the proof). What
remains to show is the boundedness for $\frac{2}{3}\leq R\leq2$.
This is done by estimating $\left|B_{R}\left(x_{0}\right)\right|$
in the manner of above to get $c\left(\frac{2}{3}\right)^{n}\leq\left|B_{R}\left(x_{0}\right)\right|\leq C2^{n}$.
We estimate further to see 
\begin{eqnarray}
\frac{1}{\left|B_{R}\left(x_{0}\right)\right|}\int_{B}\omega_{\alpha,\beta}\left(x\right)dx & \geq & C\int_{B_{\frac{R}{2}}\left(0\right)}\omega_{\alpha,\beta}\left(x\right)dx\nonumber \\
 & \geq & C\int_{B_{\frac{1}{3}\left(0\right)}}\omega_{\alpha,\beta}\left(x\right)dx\label{eq:Absch=0000E4tzungObenExample}\\
 & \geq & C\nonumber 
\end{eqnarray}
and 

\begin{eqnarray}
\frac{1}{\left|B_{R}\left(x_{0}\right)\right|}\int_{B}\omega_{\alpha,\beta}\left(x\right)dx & \leq & C\int_{B_{\frac{3}{2}R}\left(0\right)}\omega_{\alpha,\beta}\left(x\right)dx\nonumber \\
 & \leq & C\int_{B_{3\left(0\right)}}\omega_{\alpha,\beta}\left(x\right)dx\label{eq:Absch=0000E4tzungUntenExample}\\
 & \leq & C\nonumber 
\end{eqnarray}
respectively. The whole process can be repeated for $\int_{B}\omega\left(x\right)^{-p'/p}dx$,
which ultimately yields $\sup_{B_{R}\left(x_{0}\right);\left|x_{0}\right|<\frac{R}{2}}I\leq A<\infty$
and brings us to \\

\emph{Part 2: }$\left|x_{0}\right|>2R$.

We begin this part by using the triangle inequality for $x\in B_{R}\left(x_{0}\right)$,
to see
\begin{eqnarray*}
\left|x\right| & \leq & \left|x-x_{0}\right|+\left|x_{0}\right|\leq\frac{3}{2}\left|x_{0}\right|\mbox{ and }\\
\left|x\right| & \geq & \left|x_{0}\right|-\left|x-x_{0}\right|\geq\frac{1}{2}\left|x_{0}\right|.
\end{eqnarray*}
This essentially means that $\omega_{\alpha,\beta}\left(x\right)\sim\omega_{\alpha,\beta}\left(x_{0}\right)$
for all $x\in B_{R}\left(x_{0}\right).$ As above, for $\left|x_{0}\right|>2$
we get $\frac{3}{2}\left|x_{0}\right|>\frac{1}{2}\left|x_{0}\right|>1$
and likewise for $\left|x_{0}\right|<\frac{2}{3}$ we see that the
inequality $\frac{1}{2}\left|x_{0}\right|<\frac{3}{2}\left|x_{0}\right|<1$
holds. So for all $x\in B_{R}\left(x_{0}\right)$ we either have $\left|x\right|<1$
or $\left|x\right|>1$. Thus
\begin{eqnarray*}
I & \sim & \left|x_{0}\right|^{\alpha\left(\frac{1}{p}-\frac{1}{p}\right)}\sim C\mbox{\,\ for }\left|x\right|<1\,\mbox{or}\\
I & \sim & \left|x_{0}\right|^{\beta\left(\frac{1}{p}-\frac{1}{p}\right)}\sim C\,\mbox{ for }\left|x\right|\geq1.
\end{eqnarray*}
Now for $\frac{2}{3}<\left|x_{0}\right|<2$ we see for every $x\in B_{R}\left(x_{0}\right)$
that $1<\left|x\right|<3$. The remaining estimate is 
\[
\min\left(\left(\frac{1}{3}\right)^{\alpha},1,3^{\beta}\right)\leq\omega_{\alpha,\beta}\left(x\right)\leq\max\left(\left(\frac{1}{3}\right)^{\alpha},1,3^{\beta}\right)
\]
for all such $x$. This step and the above yield $\sup_{B_{R}\left(x_{0}\right);\left|x_{0}\right|>2R}I\leq A<\infty$
and we conclude this proof with\newpage{} 

\emph{Part 3: }$\frac{1}{2}R<\left|x_{0}\right|<2R$.

We apply the triangle inequality again to show the boundedness of
the supremum over all balls $B$ of $I$ centered in these $x_{0}$,
to see
\[
\left|x\right|\leq\left|x-x_{0}\right|+\left|x_{0}\right|<3R,
\]
or likewise $B_{R}\left(x_{0}\right)\subset B_{3R}\left(0\right)$.
We can also find an estimate from below. Since $B_{R}\left(x_{0}\right)$
is open, there exists another ball such that $B_{R'}\left(x_{0}\right)\subset B_{R}\left(x_{0}\right)$
with $R'=\frac{R}{6}$. By assumption $\left|x_{0}\right|>\frac{1}{2}R>\frac{1}{3}R>2R'$. 

Now remembering both, the first step of this proof and part 2 we get
\begin{eqnarray}
 &  & \frac{c}{\left|B_{\frac{R}{6}}\left(x_{0}\right)\right|}\int_{B_{\frac{R}{6}}\left(x_{0}\right)}\omega_{\alpha,\beta}\left(x\right)dx\nonumber \\
 & \leq & \frac{1}{\left|B_{R}\left(x_{0}\right)\right|}\int_{B_{R}\left(x_{0}\right)}\omega_{\alpha,\beta}\left(x\right)dx\label{eq:SoaD}\\
 & \leq & \frac{C}{\left|B_{3R}\left(0\right)\right|}\int_{B_{3R}\left(x_{0}\right)}\omega_{\alpha,\beta}\left(x\right)dx.\nonumber 
\end{eqnarray}

We have shown the existence of a boundary for both the above and below
estimates, so that there also exist one for the middle term in \eqref{eq:SoaD}.
Part 1 - 3 finally yield
\[
\sup_{B}\left(\frac{1}{\left|B\right|}\int_{B}\omega_{\alpha,\beta}\left(x\right)dx\right)^{1/p}\left(\frac{1}{\left|B\right|}\int_{B}\omega_{\alpha,\beta}\left(x\right)^{-p'/p}dx\right)^{1/p'}\leq A<\infty.
\]
\end{proof}

\section{Boundedness of the Hardy-Littlewood Maximal Operator on $L_{p}\left(\omega\right)$}

As we have mentioned in the beginning, Muckenhoupt weights were a
result to the question of the boundedness of the Hardy-Littlewood
maximal operator. Our main aim in this section will now be to show,
that for $1<p<\infty$ and $\omega\in A_{p}$ the maximal operator
$\mathcal{M}$ is indeed bounded as follows

\[
\int_{\mathbb{R}^{n}}\left(\mathcal{M}f\left(x\right)\right)^{p}\omega\left(x\right)dx\leq A\int_{\mathbb{R}^{n}}\vert f\left(x\right)\vert^{p}\omega\left(x\right)dx,
\]
for all $f\in L_{p}\left(\omega\right)$. The proof of the boundedness
of the operator from $L_{p}\left(\omega\right)$ to itself contains
an application of the Marcinkiewicz interpolation theorem. However
to apply it according to our use, we first need the following proposition.
\begin{prop}
\label{prop:MaximalOperator of weak-type Lp}Let $d\mu$ be a non-negative
Borel measure and let $1\leq p<\infty$. Then the maximal operator
$\mathcal{M}:f\mapsto\mathcal{M}\left(f\right)$ is of weak-type $\left(p,p\right),$
i.e. 
\[
\mu\left\{ x:\mathcal{M}f\left(x\right)>\alpha\right\} \leq\frac{A}{\alpha^{p}}\int\vert f\left(x\right)\vert^{p}d\mu\left(x\right),\,\mbox{all }\alpha>0,
\]
if and only if $d\mu$ is absolutely continuous, $d\mu\left(x\right)=\omega\left(x\right)dx$
with $\omega\in A_{p}$.
\end{prop}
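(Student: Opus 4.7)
For the sufficient direction $(\Leftarrow)$, I start with the level set $\Omega_\alpha := \{x : \mathcal{M}f(x) > \alpha\}$ and, for each $x \in \Omega_\alpha$, pick a ball $B_x$ centred at $x$ with $\frac{1}{|B_x|}\int_{B_x}|f(y)|\,dy > \alpha$. Splitting $|f| = (|f|\omega^{1/p})\cdot\omega^{-1/p}$ and applying Hölder's inequality with exponents $p,p'$ yields
\[
\alpha^p |B_x|^p \;\leq\; \left(\int_{B_x}|f(y)|^p\omega(y)\,dy\right)\left(\int_{B_x}\omega(y)^{-p'/p}\,dy\right)^{p/p'}.
\]
Multiplying by $\omega(B_x)|B_x|^{-p}$ and invoking \eqref{eq:MuckenHouptDef} on $B_x$ gives the pointwise selection bound $\omega(B_x) \leq A\alpha^{-p}\int_{B_x}|f|^p\omega\,dy$. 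The Vitali--Wiener covering lemma extracts a disjoint subcollection $\{B_{x_k}\}\subset\{B_x\}_{x\in\Omega_\alpha}$ with $\Omega_\alpha \subset \bigcup_k 5B_{x_k}$; the doubling property $\omega(5B) \leq C\omega(B)$, itself a direct consequence of \eqref{eq:MuckenHouptDef} applied to concentric balls, then lets me sum the selection bound over the disjoint $B_{x_k}$ and recover the full weak-type estimate.

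For the necessary direction $(\Rightarrow)$, I first verify that $\mu$ is absolutely continuous with respect to Lebesgue measure. Suppose, for contradiction, that there is a Borel set $E$ with $|E|=0$ and $0 < \mu(E) < \infty$. Outer regularity of $\mu$ yields open sets $U_n \supset E$ with $\mu(U_n\setminus E) \to 0$, and I test the weak-type hypothesis against $f_n := n\chi_{U_n\setminus E}$. Since $|E|=0$ and each $x\in E$ lies in the open set $U_n$, one has $\mathcal{M}f_n(x) = n$ for every $x\in E$, hence $E \subset \{\mathcal{M}f_n > n/2\}$ and the weak-type bound collapses to $\mu(E) \leq 2^p A\mu(U_n\setminus E) \to 0$, a contradiction. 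Therefore $d\mu = \omega\,dx$ with $\omega\in L_1^{loc}$.

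To recover \eqref{eq:MuckenHouptDef}, fix a ball $B = B_R(x_0)$ and a non-negative $g$, and apply the weak-type hypothesis to $f := g\chi_B$. For $x\in B$ the ball $B_{2R}(x)$ contains $B$, so $\mathcal{M}f(x) \geq (2^n|B|)^{-1}\int_B g\,dx$, whence $B \subset \{\mathcal{M}f > \alpha\}$ for $\alpha := (2^{n+1}|B|)^{-1}\int_B g\,dx$. The weak-type inequality then reads
\[
\omega(B)\left(\int_B g(x)\,dx\right)^p \;\leq\; C|B|^p \int_B g(x)^p\omega(x)\,dx.
\]
Inserting $g = \omega^{-p'/p}$ collapses the right-hand integrand, and rearranging the resulting inequality reproduces exactly \eqref{eq:MuckenHouptDef}.

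The main obstacle in this last step is that $\omega^{-p'/p}$ need not a priori be locally integrable, so the direct substitution $g = \omega^{-p'/p}$ is not yet admissible. I would bypass this by testing with the truncations $g_N := \min(\omega^{-p'/p},N)$, which are admissible for every $N<\infty$; the pointwise inequality $g_N^{p}\,\omega \leq g_N$ (which rests on the identity $p'(p-1)/p = 1$) allows me to replace the right-hand side of the displayed bound by $\int_B g_N\,dx$ and thereby produce an $A_p$-type estimate uniform in $N$. Letting $N\to\infty$ and invoking monotone convergence simultaneously establishes $\omega^{-p'/p}\in L_1^{loc}$ and the desired bound \eqref{eq:MuckenHouptDef}.
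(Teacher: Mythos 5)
Your argument is sound and is essentially the classical Muckenhoupt--Stein proof; the paper itself gives no proof here but simply defers to \cite[Ch. V; \S 2.2]{E.M. Stein}, so you have in effect reconstructed the cited argument. Both directions are handled correctly: the selection bound $\omega(B_x)\leq A\alpha^{-p}\int_{B_x}|f|^p\omega\,dy$ follows exactly as you say from H\"older and \eqref{eq:MuckenHouptDef}, the derived test inequality $\omega(B)\bigl(\int_B g\bigr)^p\leq C|B|^p\int_B g^p\omega\,dx$ is precisely the characterization in Lemma \ref{lem:AlternativeDarstellungFuerAp}, and your truncation $g_N=\min(\omega^{-p'/p},N)$ with the pointwise bound $g_N^p\omega\leq g_N$ is the standard and correct way to avoid assuming $\omega^{-p'/p}\in L_1^{loc}$ in advance. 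Two small points deserve a sentence each if you write this out in full. First, the $5r$-covering lemma in the form you invoke requires the radii of the selected balls to be uniformly bounded, which is not automatic here; the usual repair is to prove the estimate for an arbitrary compact subset $K\subset\Omega_\alpha$ (finite subcover, then the finite Vitali selection with dilation factor $3$) and pass to the supremum using inner regularity of $\omega\,dx$. Second, your H\"older split and the substitution $g=\omega^{-p'/p}$ degenerate at $p=1$, which the proposition includes: there the sufficiency step uses $\int_B|f|\leq\Vert\omega^{-1}\vert L_\infty(B)\Vert\int_B|f|\omega$ together with the $A_1$ inequality \eqref{eq:A_1 Absch=0000E4tzung}, and the necessity step takes $g=\chi_E$ and lets $E$ shrink to Lebesgue points of $\omega$ rather than inserting a negative power of $\omega$. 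With these two routine adjustments the proof is complete.
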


\begin{proof}
This proposition was taken from \cite[Ch. V; §2.2.]{E.M. Stein},
to which we also refer for the proof. 
\end{proof}
We will also need two further preliminaries to prove the theorem and
the first of them is the reverse Hölder inequality.
\begin{prop}
\label{prop:ReverseH=0000F6lderInequality}(Reverse Hölder Inequality)
If $\omega\in A_{\infty}$ then there exist $r>1$ and $c>0$ such
that 
\begin{equation}
\left[\frac{1}{\vert B\vert}\int_{B}\omega\left(x\right)^{r}dx\right]^{1/r}\leq\frac{c}{\vert B\vert}\int_{B}\omega\left(x\right)dx,\label{eq:ReverseH=0000F6lder}
\end{equation}
for all balls $B$.
\end{prop}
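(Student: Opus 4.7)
The plan is to combine a Calderón-Zygmund-based good-$\lambda$ inequality for the level sets of $\omega$ with a key auxiliary estimate extracted from the $A_p$ condition. Since $\omega\in A_\infty=\bigcup_{p\geq1}A_p$, I first fix some $1<p<\infty$ with $\omega\in A_p$ and write $A:=A_p(\omega)$; by the standard equivalence between balls and cubes it suffices to prove \eqref{eq:ReverseH=0000F6lder} for cubes $Q$ in place of balls $B$.

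As a preparatory step I would derive, from the defining inequality \eqref{eq:MuckenHouptDef} together with one application of Hölder's inequality (Lemma \ref{lem:HoelderUndMinkowski}) to the identity $|E|=\int_Q\chi_E\,\omega^{1/p}\omega^{-1/p}\,dx$, the auxiliary estimate
\[
\left(\frac{|E|}{|Q|}\right)^p\leq A\,\frac{\omega(E)}{\omega(Q)}\qquad\text{for every cube }Q\text{ and every measurable }E\subset Q.
\]

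Next, fix a cube $Q_0$, set $\alpha_0:=\omega(Q_0)/|Q_0|$, and for each $\alpha>\alpha_0$ carry out the dyadic Calderón-Zygmund decomposition of $\omega\chi_{Q_0}$ at level $\alpha$, producing disjoint subcubes $\{Q_j^\alpha\}_j$ satisfying $\alpha<\omega(Q_j^\alpha)/|Q_j^\alpha|\leq 2^n\alpha$ and $\omega\leq\alpha$ almost everywhere outside $\Omega_\alpha:=\bigcup_j Q_j^\alpha$. If $\beta>2^n$, then any cube appearing in the level-$\beta\alpha$ decomposition is strictly contained in some $Q_j^\alpha$; the two averaging inequalities $\omega(\Omega_{\beta\alpha}\cap Q_j^\alpha)>\beta\alpha|\Omega_{\beta\alpha}\cap Q_j^\alpha|$ and $\omega(Q_j^\alpha)\leq 2^n\alpha|Q_j^\alpha|$, combined with $\omega(\Omega_{\beta\alpha}\cap Q_j^\alpha)\leq\omega(Q_j^\alpha)$, yield the Lebesgue-measure control $|\Omega_{\beta\alpha}\cap Q_j^\alpha|\leq(2^n/\beta)|Q_j^\alpha|$. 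Feeding the complement $Q_j^\alpha\setminus\Omega_{\beta\alpha}$ into the auxiliary inequality upgrades this to $\omega(\Omega_{\beta\alpha}\cap Q_j^\alpha)/\omega(Q_j^\alpha)\leq\theta:=1-A^{-1}(1-2^n/\beta)^p$, and choosing $\beta$ large enough makes $\theta\in(0,1)$. Summing over $j$ produces the good-$\lambda$ inequality
\[
\omega(\Omega_{\beta\alpha})\leq\theta\,\omega(\Omega_\alpha),\qquad\alpha>\alpha_0,
\]
which iterates to $\omega(\Omega_{\beta^k\alpha_0})\leq\theta^k\omega(Q_0)$ for all $k\in\mathbb{N}$.

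Inserting this geometric decay into the layer-cake identity
\[
\int_{Q_0}\omega^r\,dx=(r-1)\int_0^{\infty}\alpha^{r-2}\,\omega\bigl(Q_0\cap\{\omega>\alpha\}\bigr)\,d\alpha,
\]
splitting at $\alpha_0$ and using $\{\omega>\alpha\}\cap Q_0\subset\Omega_\alpha$ (up to a null set) reduces the tail integral to a geometric series in $\theta\beta^{r-1}$. Choosing $r>1$ sufficiently close to $1$ that $\theta\beta^{r-1}<1$ (possible precisely because $\theta<1$) makes the series converge and yields
\[
\frac{1}{|Q_0|}\int_{Q_0}\omega^r\,dx\leq C\left(\frac{1}{|Q_0|}\int_{Q_0}\omega\,dx\right)^r,
\]
which is exactly \eqref{eq:ReverseH=0000F6lder} after taking $r$-th roots. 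The main obstacle is the good-$\lambda$ step: extracting the multiplicative gap $\theta<1$ in $\omega$-measure from the purely Lebesgue-measure control $2^n/\beta$, which is precisely where the $A_p$ condition enters in an essential way; the Calderón-Zygmund stopping-time construction and the final layer-cake integration are routine by comparison.
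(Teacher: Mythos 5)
Your argument is correct and is precisely the classical proof given in the reference the paper cites for this proposition (Stein, Ch.\ V, \S 3): the $A_p$-derived estimate $\left(|E|/|Q|\right)^{p}\leq A\,\omega(E)/\omega(Q)$, the Calderón--Zygmund stopping-time decomposition at geometrically increasing heights yielding $\omega(\Omega_{\beta\alpha})\leq\theta\,\omega(\Omega_{\alpha})$ with $\theta<1$, and the layer-cake integration with $r-1$ small enough that $\theta\beta^{r-1}<1$. All the quantities $\beta$, $\theta$, $r$, $C$ depend only on $n$, $p$ and $A_p(\omega)$, so the estimate is uniform over cubes, and the passage back to balls is justified since the same auxiliary inequality gives $\omega(Q)\leq A\left(|Q|/|B|\right)^{p}\omega(B)$ for $B\subset Q$.
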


\begin{proof}
We refer again to \cite[Ch. V; §3 Prop.3]{E.M. Stein} for the proof. 
\end{proof}
Taking Proposition \ref{prop:ReverseH=0000F6lderInequality} and Lemma
\ref{thm:BasicApFacts} (iii) we get the following corollary taken
from \cite[Ch. V; Sect. 3 Cor.]{E.M. Stein}.
\begin{cor}
\label{cor:KorollarKleinereAp} Suppose that $\omega\in A_{p}$ for
some $1<p<\infty$ . Then there is $p_{1}<p$ such that $\omega\in A_{p_{1}}$.
\end{cor}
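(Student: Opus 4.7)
The plan is to combine the symmetry in Lemma \ref{thm:BasicApFacts} (ii) with the Reverse Hölder inequality applied to the dual weight $\sigma=\omega^{-p'/p}$. Concretely, since $\omega\in A_p$, Lemma \ref{thm:BasicApFacts} (ii) tells us $\sigma\in A_{p'}\subset A_\infty$, so Proposition \ref{prop:ReverseH=0000F6lderInequality} yields an exponent $r>1$ and a constant $c>0$ with
\[
\left[\frac{1}{|B|}\int_B \sigma(x)^r\,dx\right]^{1/r} \leq \frac{c}{|B|}\int_B \sigma(x)\,dx
\]
for every ball $B\subset\mathbb{R}^n$.

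The next step is the key algebraic choice: define $p_1$ by the relation $p_1-1=\frac{p-1}{r}$, i.e.\ $p_1=1+\frac{p-1}{r}$. Since $r>1$ we automatically have $1<p_1<p$, and a short computation shows $\frac{p_1'}{p_1}=\frac{1}{p_1-1}=\frac{r}{p-1}=r\cdot\frac{p'}{p}$, so that $\omega^{-p_1'/p_1}=\sigma^r$. This is the whole point of choosing $p_1$ in this way.

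It then remains to check the $A_{p_1}$ condition for $\omega$. Using $\omega^{-p_1'/p_1}=\sigma^r$ and raising the Reverse Hölder inequality to the power $p_1/p_1'$, we get
\[
\left[\frac{1}{|B|}\int_B \omega(x)^{-p_1'/p_1}dx\right]^{p_1/p_1'} \leq c^{\,r\,p_1/p_1'}\left[\frac{1}{|B|}\int_B \sigma(x)\,dx\right]^{r\,p_1/p_1'}.
\]
Because $r\cdot\frac{p_1}{p_1'}=r(p_1-1)=p-1=\frac{p}{p'}$, the exponent on the right collapses to $p/p'$, and so multiplying both sides by $\frac{1}{|B|}\int_B\omega(x)\,dx$ yields the $A_{p_1}$ condition with constant bounded by $c^{p/p'}A_p(\omega)<\infty$, uniformly in $B$.

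The only substantive step is the Reverse Hölder inequality, which is already granted. The expected obstacle is purely bookkeeping: keeping the exponents $p,p',p_1,p_1',r$ straight so that the power of $\sigma$ produced by the Reverse Hölder bound lines up exactly with $\omega^{-p_1'/p_1}$. This is why the substitution to the dual weight $\sigma$ at the very beginning is essential — it converts the asymmetric $A_p$ integrand $\omega^{-p'/p}$ into a weight to which the Reverse Hölder inequality is naturally applicable, and the identity $r(p_1-1)=p-1$ is what makes the two sides of the final estimate compatible.
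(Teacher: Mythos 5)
Your proof is correct and follows essentially the same route as the paper's: you pass to the dual weight $\sigma=\omega^{-p'/p}\in A_{p'}\subset A_{\infty}$, apply the reverse H\"older inequality, and choose $p_{1}$ so that $p_{1}'/p_{1}=r\,p'/p$, which is exactly the paper's argument with the exponent bookkeeping written out explicitly. The only cosmetic slip is that your displayed estimate is obtained by raising the reverse H\"older inequality to the power $r\,p_{1}/p_{1}'$ rather than $p_{1}/p_{1}'$ as stated in your prose, but the formula itself and the ensuing computation are correct.
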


\begin{proof}
With $p'$ being the dual to $p$, we have $p'=\frac{p}{p-1}$. Also
recall Lemma \ref{thm:BasicApFacts} (ii) and define $\sigma=\omega^{-p'/p}\in A_{p'}\subset A_{\infty}$.
By the above Proposition \ref{prop:ReverseH=0000F6lderInequality}
we know that $\sigma$ satisfies a reverse Hölder inequality for some
$r>1$
\begin{equation}
\left[\frac{1}{\vert B\vert}\int_{B}\sigma\left(x\right)^{r}dx\right]^{1/r}\leq\frac{c}{\vert B\vert}\int_{B}\sigma\left(x\right)dx.\label{eq: KorollarKleinereAp}
\end{equation}
Now we see that $r\cdot p'/p=r/\left(p-1\right)=1/\left(p_{1}-1\right)=p_{1}'/p_{1}$
for some $1<p_{1}<p$, because $r>1.$ Furthermore $\omega\in A_{p}$
by assumption and by rearranging the inequality in Definition  \ref{Def: Muckenhoupt},
taking the $p/p'$-th power of \eqref{eq: KorollarKleinereAp} and
multiplying both sides with $\omega\left(B\right)/\left|B\right|$
yields $\omega\in A_{p_{1}}$. 
\end{proof}
The last two proofs were preparations for the earlier stated main
aim of this section. We will now prove it in the next theorem, taken
from \cite[Ch. V; Sect. 3; Thm. 1]{E.M. Stein}.
\begin{thm}
\label{thm:MainTheoremAp}Suppose $1<p<\infty$ and $\omega\in A_{p}$.
Then\\
\begin{equation}
\int_{\mathbb{R}^{n}}\left(\mathcal{M}f(x)\right)^{p}\omega\left(x\right)dx\leq A\int_{\mathbb{R}^{n}}\vert f\left(x\right)\vert^{p}\omega\left(x\right)dx,\label{eq:MaximalInequality}
\end{equation}

for all $f\in L_{p}\left(\omega\right)$.
\end{thm}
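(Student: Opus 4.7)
The plan is to apply the Marcinkiewicz interpolation theorem (Theorem \ref{MarcinkiewiczInterpolationTheorem}) in its measure-theoretic version, with the underlying measure taken to be $d\mu(x)=\omega(x)dx$, so that the strong-type $(p,p)$ bound \eqref{eq:MaximalInequality} emerges as the endpoint interpolate between a weak-type bound below $p$ and a trivial strong-type bound at $\infty$.

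First I would invoke Corollary \ref{cor:KorollarKleinereAp}: since $\omega\in A_{p}$ with $1<p<\infty$, there is some exponent $p_{1}$ with $1<p_{1}<p$ such that $\omega\in A_{p_{1}}$ as well. With this $p_{1}$ in hand, Proposition \ref{prop:MaximalOperator of weak-type Lp}, applied to the Borel measure $d\mu(x)=\omega(x)dx$ (which is admissible precisely because $\omega\in A_{p_{1}}$), tells me that $\mathcal{M}$ is of weak-type $(p_{1},p_{1})$ relative to $\omega$, i.e.\
\[
\omega\bigl(\{x:\mathcal{M}f(x)>\alpha\}\bigr)\leq\frac{A}{\alpha^{p_{1}}}\int_{\mathbb{R}^{n}}|f(x)|^{p_{1}}\omega(x)dx.
\]
At the other endpoint, the strong-type $(\infty,\infty)$ inequality $\Vert \mathcal{M}f\vert L_{\infty}(\omega)\Vert\leq\Vert f\vert L_{\infty}(\omega)\Vert$ is immediate from the pointwise bound $|\mathcal{M}f(x)|\leq\Vert f\vert L_{\infty}\Vert$ (weights do not affect essential suprema, since $\omega>0$ a.e.), and strong-type $(\infty,\infty)$ trivially implies weak-type $(\infty,\infty)$.

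Next I would invoke the Marcinkiewicz interpolation theorem, in the version where the underlying Lebesgue measure is replaced by an arbitrary positive Borel measure (here $\omega(x)dx$); this is the ``extended version'' alluded to in the remark following Theorem \ref{MarcinkiewiczInterpolationTheorem}, and it is the form proved in \cite[Ch.~2; Thm.~2.4.]{J. Duoandikoetxea}. Sublinearity of $\mathcal{M}$ is clear. Applying it with exponents $p_{1}$ and $\infty$ yields that $\mathcal{M}$ is of strong-type $(q,q)$ on $L_{q}(\omega)$ for every $p_{1}<q<\infty$. Since $p$ lies in this range by construction, taking $q=p$ delivers the desired inequality \eqref{eq:MaximalInequality}.

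The main obstacle, conceptually, is not the interpolation step itself but the preparation that makes it available: one really must pass from $A_{p}$ to some strictly smaller class $A_{p_{1}}$ in order to have two distinct endpoints for Marcinkiewicz, and this self-improvement is exactly the content of Corollary \ref{cor:KorollarKleinereAp}, which in turn rests on the reverse Hölder inequality (Proposition \ref{prop:ReverseH=0000F6lderInequality}). Once that openness property of $A_{p}$ is in place, together with the weak-type characterisation of Muckenhoupt weights (Proposition \ref{prop:MaximalOperator of weak-type Lp}), the theorem reduces to a routine application of Marcinkiewicz interpolation with respect to the weighted measure.
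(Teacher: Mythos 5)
Your proposal is correct and follows essentially the same route as the paper's own proof: self-improvement $\omega\in A_{p_1}$ for some $p_1<p$ via Corollary \ref{cor:KorollarKleinereAp}, the weak-type $(p_1,p_1)$ bound from Proposition \ref{prop:MaximalOperator of weak-type Lp}, the trivial $L_\infty$ endpoint, and Marcinkiewicz interpolation with respect to the measure $\omega(x)dx$. Your added remark that one needs the version of Marcinkiewicz for a general Borel measure is a point the paper glosses over, but otherwise the arguments coincide.
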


\begin{proof}
Let $\omega\in A_{p}$ be given. Then by Corollary \ref{cor:KorollarKleinereAp}
there is a $1<p_{1}<p$ for which $\omega\in A_{p_{1}}$. We know
by Proposition \ref{prop:MaximalOperator of weak-type Lp} that $\mathcal{M}$
is of weak-type $\left(L_{p_{1}}\left(\omega\right),L_{p_{1}}\left(\omega\right)\right)$.
It is also obvious that $\mathcal{M}$ is bounded on $L_{\infty}$$\left(\omega\right)$.
Now together with the Marcinkiewicz interpolation theorem (Theorem
\ref{MarcinkiewiczInterpolationTheorem}) it follows that $\mathcal{M}$
is bounded on $L_{p}\left(\omega\right)$ to itself (since $1<p_{1}<p<\infty$). 
\end{proof}
By using the Marcinkiewicz interpolation theorem, we make use of two
interpolation points (i.e. $p_{1}$ and $\infty$). In Proposition
\ref{prop:MaximalOperator of weak-type Lp} we have shown that the
weak-type $\left(p_{1},p_{1}\right)$ inequality only holds for $\omega\in A_{p_{1}}$.
Hence Theorem \ref{thm:MainTheoremAp} gives us the maximal possible
class of weights one can find when we use this interpolation theorem. 
\begin{rem}
\label{BoundednessOfMomega}Another problem which can be solved is
the question of whether or not $\mathcal{M}$ is bounded from $L_{q}\left(\omega\right)$
to $L_{q}\left(\nu\right)$. However, we won't investigate this particular
result in this thesis. It turns out that $\mathcal{M}$ is bounded
from $L_{q}\left(\omega\right)$ to $L_{q}\left(\nu\right)$ for every
$q$ with $1<p<q<\infty$ if $\nu,\omega\in A_{p}$. We refer to \cite[Ch. IV; Cor. 1.13]{J. Garcia-Cuerva; J.L. Rubio de Francia}
for the proof.
\end{rem}

\section{More Results for $A_{p}$}

There are, of course, more facts which can be derived for weights
belonging to a Muckenhoupt class $A_{p}$ but only a few will be mentioned
in this thesis. A detail, which we will need later on is the fact
that every Muckenhoupt weight is also a doubling measure. We will
prove this in the following.
\begin{defn}
\label{Def: DoublingMeasure}(Doubling Measure) We say a weight $\omega$
is a doubling measure if there is a constant $C>0$ such that
\[
\omega\left(B_{2R}\left(x_{0}\right)\right)\leq C\omega\left(B_{R}\left(x_{0}\right)\right)
\]
for all $x_{0}\in\mathbb{R}^{n}$ and $R>0$.
\end{defn}

The following lemma is an alternative way of defining the class of
Muckenhoupt weights. In this way of defining it, the origin as the
class of weights for which the maximal operator is bounded is more
obvious. It is taken from \cite[Ch. V; §1; 1.4]{E.M. Stein}.
\begin{lem}
\label{lem:AlternativeDarstellungFuerAp} Let $f\in L_{1}^{loc}$
and $B$ be a ball, then $\omega\in A_{p}$ for $p>1$  if and only
if 
\begin{equation}
\left(\frac{1}{\left|B\right|}\int_{B}f\left(x\right)dx\right)^{p}\leq\frac{c}{\omega\left(B\right)}\int_{B}f\left(x\right)^{p}\omega\left(x\right)dx\label{eq:LemmaResult}
\end{equation}
holds for all non-negative functions $f$ and all balls $B$.
\end{lem}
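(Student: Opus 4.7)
The two implications are independent, so I treat them in turn.

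For $(\Rightarrow)$, assume $\omega\in A_{p}$. Apply H\"older's inequality (Lemma~\ref{lem:HoelderUndMinkowski}) to the factorization $\int_{B}f\,dx=\int_{B}\bigl(f\omega^{1/p}\bigr)\omega^{-1/p}\,dx$ with exponents $p$ and $p'$ to obtain
\[
\int_{B}f\,dx\leq\Bigl(\int_{B}f^{p}\omega\,dx\Bigr)^{1/p}\Bigl(\int_{B}\omega^{-p'/p}\,dx\Bigr)^{1/p'}.
\]
Raising to the $p$-th power, dividing by $|B|^{p}$, and regrouping using $p-1=p/p'$ (so $|B|^{p}=|B|\cdot|B|^{p/p'}$) turns the right-hand side into
\[
\frac{1}{|B|}\int_{B}f^{p}\omega\,dx\cdot\Bigl(\frac{1}{|B|}\int_{B}\omega^{-p'/p}\,dx\Bigr)^{p/p'}.
\]
Multiplying and dividing the first factor by $\omega(B)/|B|=\frac{1}{|B|}\int_{B}\omega$ produces the $A_{p}$-quantity as a factor; estimating it by $A_{p}(\omega)$ and collecting terms yields \eqref{eq:LemmaResult} with $c=A_{p}(\omega)$.

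For $(\Leftarrow)$, the natural candidate $f=\omega^{-p'/p}\chi_{B}$ may not be locally integrable, so I first work with truncations $f_{N}:=\min(\sigma,N)\chi_{B}$, where $\sigma:=\omega^{-p'/p}$. The key algebraic identity, using $1-p'=-p'/p$, is
\[
\sigma^{p}\omega=\omega^{-p'}\cdot\omega=\omega^{1-p'}=\sigma,
\]
and a short case distinction (on $\{\sigma\le N\}$ one has equality; on $\{\sigma>N\}$ one has $N^{p}\omega<N$ because $\omega<N^{-p/p'}$) gives the pointwise bound $f_{N}^{p}\omega\leq f_{N}$ on $B$. Inserting $f_{N}$ into \eqref{eq:LemmaResult} and applying this bound yields
\[
\Bigl(\frac{1}{|B|}\int_{B}f_{N}\,dx\Bigr)^{p}\leq\frac{c}{\omega(B)}\int_{B}f_{N}\,dx,
\]
and cancelling one factor of $\int_{B}f_{N}>0$ produces the $N$-independent bound $\int_{B}f_{N}\,dx\leq\bigl(c|B|^{p}/\omega(B)\bigr)^{1/(p-1)}$. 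Monotone convergence then gives $\int_{B}\sigma\,dx<\infty$.

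With $\sigma\chi_{B}$ now an admissible test function, substitute $f=\sigma\chi_{B}$ into \eqref{eq:LemmaResult}. The identity $\sigma^{p}\omega=\sigma$ collapses the right-hand side to $(c/\omega(B))\int_{B}\sigma\,dx$, and after cancelling $\int_{B}\sigma\,dx$ and splitting again $|B|^{p}=|B|\cdot|B|^{p/p'}$ one arrives at
\[
\Bigl[\frac{1}{|B|}\int_{B}\omega\,dx\Bigr]\Bigl[\frac{1}{|B|}\int_{B}\omega^{-p'/p}\,dx\Bigr]^{p/p'}\leq c,
\]
which is precisely the defining inequality of $A_{p}$. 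The only real obstacle is the possible non-integrability of $\omega^{-p'/p}$ in the converse direction; the truncation scheme above circumvents it, the identity $\sigma^{p}\omega=\sigma$ being the engine that powers both the pointwise bound $f_{N}^{p}\omega\leq f_{N}$ and the final collapse of the right-hand side.
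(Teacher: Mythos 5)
Your proof is correct and follows essentially the same route as the paper: H\"older's inequality applied to the factorization $f=(f\omega^{1/p})\,\omega^{-1/p}$ for the forward direction, and testing the inequality with a regularized version of $\omega^{-p'/p}$ followed by monotone convergence for the converse. The only difference is cosmetic: the paper regularizes by replacing $\omega$ with $\omega+\varepsilon$ and letting $\varepsilon\to0$, whereas you truncate $\sigma=\omega^{-p'/p}$ at height $N$ and let $N\to\infty$; both devices serve the identical purpose of making the formal test function admissible before passing to the limit.
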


\begin{proof}
Let $B$ be a ball and define $f_{B}:=\frac{1}{\left|B\right|}\int_{B}f\left(x\right)dx$
as the mean value of $f$ on $B$.

``$\Longrightarrow$'' Assume $\omega\in A_{p}$, then we get
\[
f_{B}=\frac{1}{\left|B\right|}\int_{B}f\left(x\right)dx=\frac{1}{\left|B\right|}\int f\left(x\right)\omega\left(x\right)^{1/p}\omega\left(x\right)^{-1/p}dx,
\]
and apply Hölder's inequality with the exponents $p$ and $p'$ to
get
\[
\left(f_{B}\right)^{p}\leq\left|B\right|^{-p}\left(\int_{B}f\left(x\right)^{p}\omega\left(x\right)dx\right)\left(\int_{B}\omega\left(x\right)^{-p'/p}\right)^{p/p'}.
\]
From there on we see that \eqref{eq:MuckenHouptDef} leads to the
desired \eqref{eq:LemmaResult}.

``$\Longleftarrow$'' Conversely, assume \eqref{eq:LemmaResult}
holds. Our first intention is to set $f=\omega^{-p'/p}$, because
then $f^{p}\omega=\omega^{-p'+1}=\omega^{-p'/p}$ and rearranging
\eqref{eq:LemmaResult} would yield exactly the wanted inequality
\eqref{eq:MuckenHouptDef}. However, we do not know whether or not
$\int_{B}\omega\left(x\right)^{-p'/p}dx$ is finite and hence we replace
$f$ by $\left(\omega+\varepsilon\right)^{-p'/p}$ with $\varepsilon>0$.
Now by assumption $\left(\frac{1}{\left|B\right|}\int_{B}\left(\omega+\varepsilon\right)\left(x\right)^{-p'/p}dx\right)^{p}\leq\frac{C}{\omega\left(B\right)}\int_{B}\left(\omega+\varepsilon\right)\left(x\right)^{-p'}\omega\left(x\right)dx$
. Next we let $\varepsilon\rightarrow0$, use the Lebesgue monotone
convergence theorem (see \cite[App. 1, Thm. 2]{H. Triebel}), and
rearrange the inequality to get
\[
\left[\frac{1}{\left|B\right|}\int_{B}\omega\left(x\right)dx\right]\left[\frac{1}{\left|B\right|}\int_{B}\omega\left(x\right)^{-p'}\omega\left(x\right)dx\right]^{p/p'}\leq C<\infty
\]
(see \cite[Sect. 9.1.1.]{L. Grafakos }). With $\omega^{-p'+1}=\omega^{-p'/p}$
this yields \eqref{eq:MuckenHouptDef}.
\end{proof}
The following small insertion uses content of \cite[p. 133 et sqq.]{J. Duoandikoetxea}.
\begin{rem}
\label{Remark: AboutDuo}Lemma \ref{lem:AlternativeDarstellungFuerAp}
tells us two interesting facts about Muckenhoupt weights. Given a
measurable subset $D$ of a ball $B$, i.e. $D\subset B$ , we take
\eqref{eq:LemmaResult} with $f\equiv\chi_{D}$, which yields
\[
\omega\left(B\right)\left(\frac{\left|D\right|}{\left|B\right|}\right)^{p}\leq\omega\left(D\right).
\]
From this inequality we deduce the two facts. Firstly $\omega\equiv0$
or $\omega>0$ a.e., for if a measurable subset $D$ (which we assume
to be bounded) of a ball $B$ has the property $\omega\vert_{D}=0$
then it follows that $\omega\left(B\right)=0$. Secondly, $\omega\in L_{1}^{loc}$
or $\omega=\infty$ a.e.. In case there is a ball $B$ for which $\omega\left(B\right)=\infty$,
the same holds for every ball containing $B$ as well as any measurable
set contained in $B$.
\end{rem}

\begin{cor}
\label{cor:ApDoublingMeasure}Let $\omega\in A_{p}$, then $\omega$
also is a doubling measure. 
\end{cor}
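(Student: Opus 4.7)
The plan is to invoke the alternative characterization of Muckenhoupt weights from Lemma \ref{lem:AlternativeDarstellungFuerAp} (or equivalently the consequence already extracted in Remark \ref{Remark: AboutDuo}) and apply it with a characteristic function to a nested pair of concentric balls. Specifically, I would fix $x_0\in\mathbb{R}^n$ and $R>0$, set $B:=B_{2R}(x_0)$ and choose the measurable subset $D:=B_{R}(x_0)\subset B$.

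Next, I would plug the test function $f\equiv\chi_{D}$ into the inequality \eqref{eq:LemmaResult}. The left-hand side becomes $\left(|D|/|B|\right)^{p}$, while the right-hand side becomes $c\,\omega(D)/\omega(B)$, so rearranging yields
\[
\omega(B)\left(\frac{|D|}{|B|}\right)^{p}\leq c\,\omega(D).
\]
This is precisely the estimate already recorded in Remark \ref{Remark: AboutDuo} (with constant $c$ depending only on $A_{p}(\omega)$). Since the balls are concentric with radii $R$ and $2R$ in $\mathbb{R}^n$, we have $|D|/|B| = 2^{-n}$, so substitution gives
\[
\omega\bigl(B_{2R}(x_0)\bigr)\leq c\cdot 2^{np}\,\omega\bigl(B_{R}(x_0)\bigr),
\]
which is exactly the doubling condition of Definition \ref{Def: DoublingMeasure} with constant $C:=c\cdot 2^{np}$. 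Since $x_0$ and $R$ were arbitrary, the proof is complete.

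There is essentially no hard step here: the entire argument is a one-line application of Lemma \ref{lem:AlternativeDarstellungFuerAp} to a characteristic function, together with the elementary volume ratio $|B_R|/|B_{2R}|=2^{-n}$. The only thing to be careful about is making explicit that the constant $c$ in \eqref{eq:LemmaResult} is controlled by $A_p(\omega)$ (which is clear from the ``$\Longrightarrow$'' direction of the lemma's proof), so that the resulting doubling constant $C=c\cdot 2^{np}$ depends only on $n$, $p$ and $A_p(\omega)$ and not on $x_0$ or $R$.
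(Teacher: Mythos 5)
Your proposal is correct and follows exactly the paper's own argument: apply Lemma \ref{lem:AlternativeDarstellungFuerAp} with $B=B_{2R}(x_{0})$ and $f=\chi_{B_{R}(x_{0})}$, compute the volume ratio $2^{-n}$, and read off the doubling constant $c\cdot2^{np}$. Your added remark on tracking the dependence of the constant on $A_{p}(\omega)$ is a welcome precision but does not change the substance.
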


\begin{proof}
Let $\omega\in A_{p}$. Now let $R>0$ and set $B:=B_{2R}\left(x_{0}\right)$
as well as $f:=\chi_{B_{R}\left(x_{0}\right)}$ to apply Lemma \ref{lem:AlternativeDarstellungFuerAp}.
Clearly \eqref{eq:LemmaResult} now turns into
\[
\left(\frac{1}{\left|B_{2R}\left(x_{0}\right)\right|}\int_{B_{2R}\left(x_{0}\right)}\chi_{B_{R}\left(x_{0}\right)}\left(x\right)dx\right)^{p}\leq\frac{c}{\omega\left(B_{2R}\left(x_{0}\right)\right)}\int_{B_{2R}\left(x_{0}\right)}\left(\chi_{B_{R}}\left(x\right)\right)^{p}\omega\left(x\right)dx.
\]
From there on we easily deduce
\[
\omega\left(B_{2R}\left(x_{0}\right)\right)\leq c'\omega\left(B_{R}\left(x_{0}\right)\right),
\]
with $c'=2^{np}c$. Hence $\omega$ is a doubling measure.
\end{proof}
The next theorem shows how to construct Muckenhoupt weights $\omega$
belonging to the class $A_{p}$ for every $1<p<\infty$, given two
weights $\omega_{1},\omega_{2}\in A_{1}$ or vice versa. The theorem
as well as its proof have been taken from \cite[Ch. V; Sect. 5.3. Prop. 9]{E.M. Stein}.
\begin{thm}
Suppose that $\omega_{1},\omega_{2}\in A_{1}$. If $1\leq p<\infty$,
then $\omega=\omega_{1}\omega_{2}^{1-p}$ belongs to $A_{p}$. Equivalently,
suppose $\omega\in A_{p}$ , then there are $\omega_{1},\omega_{2}\in A_{1}$
such that $\omega=\omega_{1}\omega_{2}^{1-p}$.
\end{thm}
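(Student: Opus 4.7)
The plan is to treat the two implications separately since only the second is substantive.

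\textbf{Forward implication ($\omega_{1},\omega_{2}\in A_{1}\Rightarrow\omega=\omega_{1}\omega_{2}^{1-p}\in A_{p}$).} This is a direct calculation. For $p=1$ the product reduces to $\omega_{1}\in A_{1}$. For $1<p<\infty$, one uses the identity $p'/p=1/(p-1)$ to rewrite
\[
\omega^{-p'/p}=\omega_{1}^{-1/(p-1)}\omega_{2},
\]
together with the pointwise form of the $A_{1}$ condition (cf.\ Remark \ref{Remark Definition A_1}): for a.e.\ $x$ in any ball $B$,
\[
\omega_{i}(x)^{-1}\leq A_{1}(\omega_{i})\Bigl(\tfrac{1}{|B|}\int_{B}\omega_{i}\,dy\Bigr)^{-1},\qquad i=1,2.
\]
Inserting these bounds (raised to the correct powers) inside the two averages appearing in the $A_{p}$ condition \eqref{eq:MuckenHouptDef} pulls the mean values of $\omega_{1}$ and $\omega_{2}$ outside the integrals; multiplying the two resulting factors then makes those means cancel, yielding the sharp estimate $A_{p}(\omega)\leq A_{1}(\omega_{1})\,A_{1}(\omega_{2})^{p-1}$.

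\textbf{Reverse implication (Jones' factorization).} This is the delicate content. My approach is Rubio de Francia's iteration (``algorithm''). Given $\omega\in A_{p}$ with $1<p<\infty$, Lemma \ref{thm:BasicApFacts}(ii) shows that the conjugate weight $\sigma:=\omega^{-p'/p}$ lies in $A_{p'}$, and Theorem \ref{thm:MainTheoremAp} then yields finite norms $C_{1},C_{2}$ of $\mathcal{M}$ on $L_{p}(\omega)$ and on $L_{p'}(\sigma)$ respectively. For a non-negative $h\in L_{p}(\omega)$ I would set
\[
R_{1}h:=\sum_{k=0}^{\infty}\frac{\mathcal{M}^{k}h}{(2C_{1})^{k}}.
\]
This geometric series converges in $L_{p}(\omega)$, we have $R_{1}h\geq h$ pointwise, and by sublinearity together with a telescoping argument $\mathcal{M}(R_{1}h)\leq 2C_{1}\,R_{1}h$; hence $R_{1}h\in A_{1}$ with $A_{1}$-constant at most $2C_{1}$. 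An analogous operator $R_{2}$ on $L_{p'}(\sigma)$ likewise produces $A_{1}$ weights out of arbitrary non-negative seeds.

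\textbf{Extraction of the factorization; main obstacle.} The final step is to choose starting data carefully so that the two resulting $A_{1}$ weights really multiply back to $\omega$. The proposed device (as in \cite[Ch.\ V; §5.3]{E.M. Stein}) is to pick non-negative initial data $h_{1},h_{2}$ obtained by truncating $\omega^{1/p}$ and $\sigma^{1/p'}$ against a large ball so that they lie in $L_{p}(\omega)$ and $L_{p'}(\sigma)$, set $\omega_{2}:=R_{2}h_{2}\in A_{1}$, and then define $\omega_{1}:=\omega\,\omega_{2}^{p-1}$, so that $\omega=\omega_{1}\omega_{2}^{1-p}$ holds automatically. The main obstacle is the verification that $\omega_{1}\in A_{1}$: unlike the forward direction it is not a purely pointwise manipulation, and one must combine the $A_{1}$-bound $\mathcal{M}(\omega_{2})\leq 2C_{2}\omega_{2}$ for the iterate with the $A_{p}$-hypothesis on $\omega$ — typically via Hölder's inequality and the reformulation in Lemma \ref{lem:AlternativeDarstellungFuerAp} — in order to estimate $\mathcal{M}(\omega\,\omega_{2}^{p-1})$ by $\omega\,\omega_{2}^{p-1}$ up to a constant. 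Once this estimate is secured, the factorization $\omega=\omega_{1}\omega_{2}^{1-p}$ with $\omega_{1},\omega_{2}\in A_{1}$ is complete.
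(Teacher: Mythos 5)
Your forward implication is correct and coincides with Step 2 of the paper's proof: the pointwise $A_{1}$ bounds on $\omega_{1}^{-1}$ and $\omega_{2}^{-1}$ pull the averages apart and cancel, giving $A_{p}(\omega)\leq A_{1}(\omega_{1})A_{1}(\omega_{2})^{p-1}$. The reverse implication, however, has a genuine gap, and it sits exactly where you flag the ``main obstacle.'' Running the Rubio de Francia iteration separately on $L_{p}(\omega)$ and on $L_{p'}(\sigma)$ does produce $A_{1}$ weights, but an $A_{1}$ weight $\omega_{2}$ obtained this way carries no information that forces $\omega_{1}:=\omega\,\omega_{2}^{p-1}$ to be $A_{1}$. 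Indeed the implication you hope to ``secure'' is false for generic inputs: take $\omega\equiv1\in A_{p}$ and $\omega_{2}(x)=\vert x\vert^{-a}$ with $0<a<n$, which is an $A_{1}$ weight; then $\omega\,\omega_{2}^{p-1}=\vert x\vert^{-a(p-1)}$ fails even to be locally integrable once $a(p-1)\geq n$. So no amount of H\"older or Lemma \ref{lem:AlternativeDarstellungFuerAp} applied to the hypotheses ``$\omega\in A_{p}$ and $\omega_{2}\in A_{1}$'' alone can close the argument; the conclusion depends on $\omega_{2}$ being coupled to $\omega$ in a very specific way.

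The missing idea is precisely that coupling, and it is the heart of the paper's Step 3. There one forms a \emph{single} sublinear operator
\[
Tf=\left[\omega^{-1/p}\mathcal{M}\left(f^{p/p'}\omega^{1/p}\right)\right]^{p'/p}+\omega^{1/p}\mathcal{M}\left(f\omega^{-1/p}\right),
\]
bounded on $L_{p}$ by Theorem \ref{thm:MainTheoremAp} applied to both $\omega$ and $\sigma=\omega^{-p'/p}\in A_{p'}$, and applies the geometric-series iteration to $T$ rather than to $\mathcal{M}$ twice. The resulting $\eta=\sum_{k\geq1}(2A)^{-k}T^{k}f$ satisfies the \emph{joint} almost-fixed-point inequality $T\eta\leq 2A\eta$, and it is this single inequality that simultaneously yields $\mathcal{M}(\omega^{1/p}\eta^{p/p'})\leq C\,\omega^{1/p}\eta^{p/p'}$ and $\mathcal{M}(\omega^{-1/p}\eta)\leq C\,\omega^{-1/p}\eta$, i.e.\ that \emph{both} $\omega_{1}=\omega^{1/p}\eta^{p/p'}$ and $\omega_{2}=\omega^{-1/p}\eta$ are $A_{1}$ while $\omega_{1}\omega_{2}^{1-p}=\omega$ holds by construction. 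Your two decoupled operators $R_{1},R_{2}$ cannot reproduce this because the two summands of $T$ must be dominated by the same function $\eta$. If you replace your $R_{1},R_{2}$ by the iteration of this combined $T$ (and handle $1<p<2$ by factorizing $\sigma\in A_{p'}$ and raising to the power $-p/p'$, as the paper does), the proof goes through.
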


\begin{proof}
\emph{Step 1 (Motivation)}

To get a first idea of how this proof works, let us consider the most
transparent case, which is $p=2$. Let $\omega_{1},\omega_{2}\in A_{1}$.
We begin by having a look at $\omega=\omega_{1}\omega_{2}^{-1}$ and
further 
\[
\int_{B}\omega\left(x\right)dx\leq\int_{B}\omega_{1}\left(x\right)dx\left[\inf_{x\in B}\omega_{2}\left(x\right)\right]^{-1}.
\]
Likewise
\[
\int_{B}\omega^{-1}\left(x\right)dx\leq\int_{B}\omega_{2}\left(x\right)dx\left[\inf_{x\in B}\omega_{1}\left(x\right)\right]^{-1},
\]
from which we conclude
\[
\left(\frac{1}{\vert B\vert}\int_{B}\omega\left(x\right)dx\right)\left(\frac{1}{\vert B\vert}\int_{B}\omega\left(x\right)^{-1}dx\right)\leq\prod_{i=1}^{2}\left(\frac{1}{\vert B\vert}\int_{B}\omega_{i}\left(x\right)dx\left[\inf_{x\in B}\omega_{i}\left(x\right)^{-1}\right]\right),
\]
where each term on the right hand side is bounded, since $\omega_{i}\in A_{1}$
for $i=1,2$. This yields $\omega\in A_{2}$.

Let us now prove the converse. We assume that $\omega\in A_{2}$ and
take the maximal operator $\mathcal{M}$. Furthermore we construct
\begin{equation}
Tf:=\omega^{-1/2}\mathcal{M}\left(\omega^{1/2}f\right)+\omega^{1/2}\mathcal{M}\left(\omega^{-1/2}f\right).\label{eq:OperatorT}
\end{equation}
By definition of $\mathcal{M}$ this operator is sublinear, i.e. $T\left(f+g\right)\leq Tf+Tg$.
It is also symmetric in $\omega$, i.e. if $\omega\in A_{2}$ then
so is $\omega^{-1}$. So we can assume that $\omega^{-1/2}\mathcal{M}\omega^{1/2}$
and $\omega^{1/2}\mathcal{M}\omega^{-1/2}$ are bounded from $L_{2}\left(\mathbb{R}^{n}\right)\rightarrow L_{2}\left(\mathbb{R}^{n}\right)$.
Together with \eqref{eq:OperatorT} we conclude
\begin{equation}
\Vert Tf\vert L_{2}\left(\mathbb{R}^{n}\right)\Vert\leq A\Vert f\vert L_{2}\left(\mathbb{R}^{n}\right)\Vert\,\mbox{ for some\,}A>0.\label{eq:Operatorabsch=0000E4tzung}
\end{equation}
Let us now fix $\Vert f\vert L_{2}\left(\mathbb{R}^{n}\right)\Vert=1$
and define 
\[
\eta:=\sum_{k=1}^{\infty}\left(2A\right)^{-k}T^{k}\left(f\right),
\]
where we used the notation for the iterated operator $T^{k}f:=T^{k-1}\left(T\left(f\right)\right)$.
From there on we easily see that $\Vert\eta\vert L_{2}\left(\mathbb{R}^{n}\right)\Vert\leq\sum_{k=1}^{\infty}2^{-k}=1$,
since it shrinks down to a geometric series together with \eqref{eq:Operatorabsch=0000E4tzung}.
Hence $\eta\in L_{2}\left(\mathbb{R}^{n}\right)$. The next fact we
need to take into consideration is the pointwise inequality
\[
T\eta=\sum_{k=1}^{\infty}\left(2A\right)^{-k}T^{k+1}\left(f\right)\leq\sum_{k=1}^{\infty}\left(2A\right)^{1-k}T^{k}\left(f\right)=\left(2A\right)\eta.
\]
Now with $\omega_{1}=\omega^{1/2}\eta$, we see
\[
\mathcal{M}\left(\omega_{1}\right)\leq T\left(\eta\right)\omega^{1/2}\leq\left(2A\right)\eta\omega^{1/2}=\left(2A\right)\omega_{1},
\]
and hence essentially $\omega_{1}\in A_{1}$ as we have seen in \eqref{eq:A_1 Absch=0000E4tzung}.
Likewise we look at $\omega_{2}=\omega^{-1/2}\eta$ and by applying
the same steps we get $\mathcal{M}\left(\omega_{2}\right)\leq A\omega_{2}$
and $\omega_{2}\in A_{1}$. We conclude 
\[
\frac{\omega_{1}}{\omega_{2}}=\frac{\omega^{1/2}\eta}{\omega^{-1/2}\eta}=\omega\in A_{2},
\]
and thus the case $p=2$ is proved.

Now that we gained some insight for the arguments needed to prove
the claim, let us proceed to general $p$.

\emph{Step 2 $"\Longrightarrow"$}

The proof of the first part is completely similar to what we did earlier
for $p=2$. So let $\omega_{1},\omega_{2}\in A_{1}$ and $\omega=\omega_{1}\omega_{2}^{1-p}$.
As before we notice 
\[
\int_{B}\omega\left(x\right)dx\leq\int\omega_{1}\left(x\right)dx\left[\inf_{x\in B}\omega_{2}\left(x\right)\right]^{1-p}\mbox{ and}
\]
\[
\int_{B}\omega\left(x\right)^{-p'/p}dx\leq\int_{B}\omega_{2}\left(x\right)dx\left[\inf_{x\in B}\omega_{1}\left(x\right)\right]^{-p'/p}
\]
and hence
\begin{eqnarray*}
 & \left[\frac{1}{\left|B\right|}\int_{B}\omega\left(x\right)dx\right]\left[\frac{1}{\left|B\right|}\int_{B}\omega\left(x\right)^{-p'/p}dx\right]^{p/p'}\\
\leq & \frac{1}{\left|B\right|}\int_{B}\omega_{1}\left(x\right)dx\left[\inf_{x\in B}\omega_{2}\left(x\right)\right]^{1-p}\left[\frac{1}{\left|B\right|}\int_{B}\omega_{2}\left(x\right)dx\right]^{p/p'}\left[\inf_{x\in B}\omega_{1}\left(x\right)\right]^{-1} & .
\end{eqnarray*}

\emph{Step 3 $"\Longleftarrow"$ }

The converse is also done equivalently to the case $p=2$. Let us
start with $p\geq2$ and $\omega\in A_{p}$. We define
\[
Tf=\left[\omega^{-1/p}\mathcal{M}\left(f^{p/p'}\omega^{1/p}\right)\right]^{p'/p}+\omega^{1/p}\mathcal{M}\left(f\omega^{-1/p}\right).
\]
Because $\omega\in A_{p}$ we know $\omega^{-p'/p}\in A_{p'}$ by
Lemma \ref{thm:BasicApFacts}. To show that $T$ is bounded on $L_{p}$
we need to recall Theorem \ref{thm:MainTheoremAp}. Minkowski's inequality
(see Lemma \ref{lem:HoelderUndMinkowski}) gives $T\left(f+g\right)\leq Tf+Tg,$
since $p\geq2$ and hence $p/p'=p/\left(p/p-1\right)\geq1$. The fact
that both $\left[\omega^{-1/p}M\left(f^{p/p'}\omega^{1/p}\right)\right]^{p'/p}$
and $\omega^{1/p}M\left(f\omega^{-1/p}\right)$ are bounded from $L_{p}\left(\mathbb{R}^{n}\right)$
to itself is again obvious if we recall Theorem \ref{thm:MainTheoremAp}.
With all these preparations, we can now form $\eta$ just like above
and write $\omega_{1}=\omega^{1/p}\eta^{p/p'}$ and $\omega_{2}=\omega^{-1/p}\eta$.
Also just like above $T\eta\leq\left(2A\right)\eta$ for some $A>0$
and proceeding $\omega_{1},\omega_{2}\in A_{1}$. Furthermore
\[
\omega_{1}\omega_{2}^{1-p}=\omega^{1/p}\eta^{p/p'}\cdot\left(\omega^{-1/p}\eta\right)^{1-p}=\omega,
\]
which concludes the proof for $p\geq2$. To finish the proof for $p<2$,
we simply use that $\omega^{-p'/p}\in A_{p'}$, factorize it just
like above and raise the product to the power $\left(-p/p'\right)$,
so that the factorization holds for $\omega$.
\end{proof}

\begin{rem}
Although it will not occur again in this thesis, we should mention
the existence of more general Muckenhoupt classes used for other purposes.
They occurred first in \cite{B. Muckenhoupt; R. L. Wheeden} as a
result to the question of boundedness for all non-negative functions
$\omega$ for the operator $T_{\gamma}f\left(x\right):=\int f\left(x-y\right)\left|y\right|^{\gamma-n}dy$,
with $n$ being the dimension. In particular the problem was to determine
all non-negative functions $\omega$ for which $\Vert T_{\gamma}f\vert L_{q}\left(\omega\right)\Vert\leq C\Vert f\vert L_{p}\left(\omega\right)\Vert$
holds. The result was a class of weight functions denoted with $A_{p,q}$.
A precise formulation of the problem, as well as a proper definition
can be found in \cite{B. Muckenhoupt; R. L. Wheeden}. The definition
and a few facts connecting $A_{p,q}$ to the regular $A_{p}$ can
also be found in \cite[Def. 2.10.]{Y. Komori; S. Shirai}. 

A weight function belongs to $A_{p,q}$ for $1<p<q<\infty$ if there
exists $C>1$ such that
\[
\left(\frac{1}{\vert B\vert}\int_{B}\omega\left(x\right)^{q}dx\right)^{1/q}\left(\frac{1}{\vert B\vert}\int_{B}\omega\left(x\right)^{-p'}dx\right)^{1/p'}\leq C.
\]
For $p=1$ we say that $\omega\in A_{1,q}$ with $1<q<\infty$ if
there is $C>1$ such that
\[
\left(\frac{1}{\vert B\vert}\int_{B}\omega\left(x\right)^{q}dx\right)^{1/q}\left(\underset{x\in B}{\mbox{ess }\sup}\frac{1}{\omega\left(x\right)}\right)\leq C.
\]
\end{rem}

\newpage{}

\part{\label{part:WeightedMorreySpaces}Weighted Morrey Spaces}

After we studied unweighted Morrey spaces and Muckenhoupt weights
we will now have a look at weighted Morrey spaces. Whereas we will
focus on general weights in Section \ref{sec:Section4}, we will mostly
combine Part \ref{part:Part1} and Part \ref{part:MuckenhouptWeights}
in Section \ref{sec:Section5} and \ref{sec:Embeddings-of-weighted}
and have a look at weighted Morrey spaces using Muckenhoupt weights. 

\section{\label{sec:Section4}Definition and Basic Properties}

We begin by defining weighted Morrey spaces and compare them to the
definition and the results in Part \ref{part:Part1}.
\begin{defn}
\label{Def:WeightedMorreySpaces}(Weighted Morrey Spaces) Let $0<p\leq u\leq\infty$,
and $\omega$ be a weight. Then the weighted Morrey spaces are defined
by 
\[
M_{u,p}\left(\omega\right)=\left\{ f\in L_{p}^{loc}\left(\omega\right):\Vert f\vert M_{u,p}\left(\omega\right)\Vert<\infty\right\} ,
\]
with their corresponding quasi-norm
\[
\Vert f\vert M_{u,p}\left(\omega\right)\Vert=\sup_{x\in\mathbb{R}^{n},R>0}\omega\left(B_{R}\left(x\right)\right)^{\frac{1}{u}-\frac{1}{p}}\left(\int_{B_{R}\left(x\right)}\vert f\left(y\right)\vert^{p}\omega\left(y\right)dy\right)^{1/p}.
\]
For $p=\infty$ we define the norm $\Vert\cdot\vert M_{\infty,\infty}\left(\omega\right)\Vert$
with respect to the norm of essentially bounded functions analogously.
\end{defn}

As usual $\Vert\cdot\vert M_{u,p}\left(\omega\right)\Vert$ is a norm
for $p\geq1$ and a quasi-norm for $p<1$, as we will show in the
next lemma.. When we have a look at the definition of weighted Morrey
spaces and recall the facts for their unweighted counterparts from
Part \ref{part:Part1}, we are able to immediately transfer certain
features without too much work. For the analogue proofs, we refer
to Lemma \ref{lem:(Embedding C)} and \ref{lem:LemmaMorreySpaces}.
\begin{lem}
\label{Re: RepeatedRemark}(i) If $u<p$, then $M_{u,p}\left(\omega\right)=\left\{ 0\right\} $.

(ii) If $u=\infty$, then $M_{\infty,p}\left(\omega\right)=L_{\infty}\left(\omega\right)$
with equivalence of the corresponding norms.

(iii) If $u=p$ we have $M_{p,p}\left(\omega\right)=L_{p}$$\left(\omega\right)$.

(iv) $\left(M_{u,p}\left(\omega\right)\vert\Vert\cdot\vert M_{u,p}\left(\omega\right)\Vert\right)$
is a quasi-Banach space.
\end{lem}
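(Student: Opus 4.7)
The plan is to mirror the unweighted proofs of Lemma~\ref{lem:(Embedding C)} and Lemma~\ref{lem:LemmaMorreySpaces}, systematically replacing the Lebesgue volume $|B_R(x)|$ by $\omega(B_R(x))$ in the prefactor $\omega(B_R(x))^{1/u-1/p}$. Since $\omega\in L_1^{loc}$ and $\omega>0$ a.e., every ball has strictly positive and finite $\omega$-measure, so every quantity below is well defined.

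For (i), assume $u<p$ and suppose $f\in M_{u,p}(\omega)$ does not vanish almost everywhere; then there is a ball $B_{R_0}(x_0)$ with $\int_{B_{R_0}(x_0)}\vert f\vert^{p}\omega>0$. For $R\geq R_0$ the integral over $B_R(x_0)$ stays bounded below by this positive constant, while $\omega(B_R(x_0))^{1/u-1/p}\to\infty$ as $R\to\infty$ because $1/u-1/p>0$, so the supremum defining $\Vert f\vert M_{u,p}(\omega)\Vert$ diverges, a contradiction. For (iii), the prefactor is constantly $1$, and monotone convergence as $R\to\infty$ identifies the supremum with $\bigl(\int_{\mathbb{R}^n}\vert f\vert^{p}\omega\bigr)^{1/p}=\Vert f\vert L_p(\omega)\Vert$.

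For (ii), when $u=\infty$ the $M_{\infty,p}(\omega)$-norm becomes the supremum over balls of the $\omega$-averaged quantity $\bigl(\frac{1}{\omega(B_R(x))}\int_{B_R(x)}\vert f\vert^{p}\omega\bigr)^{1/p}$. The inequality $\Vert f\vert M_{\infty,p}(\omega)\Vert\leq\Vert f\vert L_\infty\Vert$ is immediate from $\vert f\vert^{p}\leq\Vert f\vert L_\infty\Vert^{p}$ almost everywhere. For the reverse direction I would invoke Lebesgue's differentiation theorem simultaneously for $\vert f\vert^{p}\omega$ and for $\omega$: at a common Lebesgue point $x_0$ the ratio $\frac{1}{\omega(B_R(x_0))}\int_{B_R(x_0)}\vert f\vert^{p}\omega$ converges to $\vert f(x_0)\vert^{p}$ as $R\to 0$, yielding $\vert f(x_0)\vert\leq\Vert f\vert M_{\infty,p}(\omega)\Vert$ at almost every $x_0$ and hence the missing estimate on the $L_\infty$-norm.

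The main work is in (iv). Given a Cauchy sequence $(f_n)$ in $M_{u,p}(\omega)$, the key observation is that on any fixed ball $B$ the factor $\omega(B)^{1/u-1/p}$ is a positive finite constant, so $(f_n)$ is automatically Cauchy in $L_p(B,\omega)$. A standard diagonal-subsequence argument then produces a measurable almost everywhere pointwise limit $f$, and applying Fatou's lemma inside the supremum delivers both $\Vert f\vert M_{u,p}(\omega)\Vert\leq\liminf_n\Vert f_n\vert M_{u,p}(\omega)\Vert<\infty$ and $\Vert f_n-f\vert M_{u,p}(\omega)\Vert\to 0$. The triangle (respectively quasi-triangle) inequality for $\Vert\cdot\vert M_{u,p}(\omega)\Vert$ is inherited ball by ball from weighted Minkowski (Remark~\ref{WeightedHoelder}) when $p\geq 1$, and from $(a+b)^p\leq a^p+b^p$ when $0<p<1$. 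I expect the main, though still routine, obstacle to be exchanging the supremum over balls with the $\liminf$ in the Fatou step and handling the two regimes $p\geq 1$ and $0<p<1$ in a single argument.
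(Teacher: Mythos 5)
Your treatment of (ii), (iii) and the normed/quasi-normed structure in (iv) is sound and essentially parallels the paper, which for (i)--(iii) merely refers back to the unweighted case and for (iv) adapts Rosenthal's argument. Two points deserve comment.

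First, your proof of (i) contains a step that fails for general weights: you claim $\omega\left(B_{R}\left(x_{0}\right)\right)^{1/u-1/p}\rightarrow\infty$ as $R\rightarrow\infty$, but a weight in the sense of this thesis is only required to satisfy $\omega\in L_{1}^{loc}$ and $\omega>0$ a.e., so it may well be globally integrable (take $\omega\left(x\right)=\left(1+\left|x\right|\right)^{-2n}$). Then $\omega\left(B_{R}\left(x_{0}\right)\right)\leq\omega\left(\mathbb{R}^{n}\right)<\infty$ for every $R$, the prefactor stays bounded, and in fact $f=\chi_{B_{1}\left(0\right)}$ is a nonzero element of $M_{u,p}\left(\omega\right)$ with $\Vert f\vert M_{u,p}\left(\omega\right)\Vert\leq\omega\left(\mathbb{R}^{n}\right)^{1/u}$, so the statement itself needs the extra hypothesis $\omega\left(\mathbb{R}^{n}\right)=\infty$ for your argument (or any adaptation of the unweighted one) to go through. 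Your write-up makes this hidden assumption explicit but does not justify it; the paper glosses over the issue entirely by deferring (i) to the unweighted reference.

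Second, for completeness in (iv) you take a genuinely different, and arguably cleaner, route than the paper. The paper follows the absolutely-convergent-series scheme: it extracts a subsequence with $\Vert f_{n_{k+1}}-f_{n_{k}}\vert M_{u,p}\left(\omega\right)\Vert\leq2^{-k}$, forms $g=\sum_{k}\vert f_{n_{k+1}}-f_{n_{k}}\vert$, proves $g\in M_{u,p}\left(\omega\right)$ by monotone convergence, verifies $g<\infty$ a.e. through a countable cover by balls with rational data, and defines the limit via the telescoping series. You instead observe that for each fixed ball $B$ the factor $\omega\left(B\right)^{1/u-1/p}$ is a finite positive constant, so the sequence is Cauchy in every $L_{p}\left(B,\omega\right)$, extract an a.e.-convergent diagonal subsequence, and conclude by Fatou ball by ball. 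The interchange you worry about is harmless: Fatou gives $\omega\left(B\right)^{1/u-1/p}\left(\int_{B}\left|f-f_{n}\right|^{p}\omega\left(x\right)dx\right)^{1/p}\leq\liminf_{k}\Vert f_{n_{k}}-f_{n}\vert M_{u,p}\left(\omega\right)\Vert\leq\sup_{m\geq n}\Vert f_{m}-f_{n}\vert M_{u,p}\left(\omega\right)\Vert$ uniformly in $B$, and the supremum over $B$ is taken afterwards; both regimes $p\geq1$ and $0<p<1$ are covered by the same computation since Fatou acts on the integrals rather than on the norms. Your route avoids the paper's somewhat laborious verification that $g<\infty$ almost everywhere, while the paper's route has the advantage of exhibiting the limit explicitly as a series.
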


\begin{proof}
We will only prove (iv). As we have already mentioned in Lemma \ref{lem:(Embedding C)},
the proofs of (i) and (ii) can be found in \cite[Rem. 1.2.]{M. Rosenthal}
for the unweighted case, but can be adapted when we add a weight.
(iii) follows immediately by the definition of the norm of weighted
Morrey spaces. The proof of (iv) follows \cite[Thm. 1.3.]{M. Rosenthal}
(i) almost exactly. By Lemma \ref{Re: RepeatedRemark} (ii) it is
enough to focus on $p<\infty$.

At first we prove that $\Vert\cdot\vert M_{u,p}\left(\omega\right)\Vert$
is a norm for $p\geq1$ and a quasi-norm for $0<p<1$. It is obvious
that $\Vert f\vert M_{u,p}\left(\omega\right)\Vert\geq0$ for all
$f\in M_{u,p}\left(\omega\right)$. Now since $\omega>0$ a.e. we
have \\
\begin{eqnarray*}
\Vert f\vert M_{u,p}\left(\omega\right)\Vert=0 & \Longleftrightarrow & \Vert f\vert L_{p}\left(B,\omega\right)\Vert=0\mbox{ for all balls }B\\
 & \Longleftrightarrow & \Vert f\vert L_{p}\left(\omega\right)\Vert=0\\
 & \Longleftrightarrow & f=0\mbox{ a.e.}.
\end{eqnarray*}
The next step $\Vert\lambda f\vert M_{u,p}\left(\omega\right)\Vert=\left|\lambda\right|\Vert f\vert M_{u,p}\left(\omega\right)\Vert$
for $\lambda\in\mathbb{R}$ is obvious. At last we see that the triangle
inequality holds for $p\geq1$ by using Minkowski's inequality for
weighted Lebesgue spaces , i.e. 
\begin{eqnarray*}
\Vert f+g\vert M_{u,p}\left(\omega\right)\Vert & = & \sup_{B}\omega\left(B\right)^{1/u-1/p}\Vert f+g\vert L_{p}\left(B,\omega\right)\Vert\\
 & \overset{\tiny\mbox{Lem. }\ref{lem:HoelderUndMinkowski}}{\leq} & \sup_{B}\omega\left(B\right)^{1/u-1/p}\left(\Vert f\vert L_{p}\left(B,\omega\right)\Vert+\Vert g\vert L_{p}\left(B,\omega\right)\Vert\right)\\
 & \leq & \Vert f\vert M_{u,p}\left(\omega\right)\Vert+\Vert g\vert M_{u,p}\left(\omega\right)\Vert.
\end{eqnarray*}
For $p<1$ we use $\Vert f+g\vert L_{p}\left(\omega\right)\Vert\leq2^{1/p-1}\left(\Vert f\vert L_{p}\left(\omega\right)\Vert+\Vert g\vert L_{p}\left(\omega\right)\Vert\right)$
for $f,g\in L_{p}\left(\omega\right)$, which is obtained by adding
a weight to the proof in \cite[Thm. 1.10.]{J. Elstrodt}. Hence we
get $\Vert f+g\vert M_{u,p}\left(\omega\right)\Vert\leq2^{1/p-1}\left(\Vert f\vert M_{u,p}\left(\omega\right)\Vert+\Vert g\vert M_{u,p}\left(\omega\right)\Vert\right)$.

The next fact we need is, that $M_{u,p}\left(\omega\right)$ is a
vector space, which is easily deduced by the fact that $M_{u,p}\left(\omega\right)\subset L_{p}\left(\omega\right)$,
i.e. $M_{u,p}\left(\omega\right)$ is a quasi-normed subspace of the
weighted Lebesgue spaces, thus making it a vector space.

The last thing to show is that $M_{u,p}\left(\omega\right)$ is complete.
We will have a look at $p\geq1$ first. Therefore let $\left(f_{n}\right)_{n\in\mathbb{N}}$
be a Cauchy sequence in $M_{u,p}\left(\omega\right)$. We have to
show the existence of some $f\in M_{u,p}\left(\omega\right)$ with
$\Vert f-f_{n}\vert M_{u,p}\left(\omega\right)\Vert\overset{n\rightarrow\infty}{\longrightarrow}0$.
Now since $\left(f_{n}\right)_{n\in\mathbb{N}}$ is a Cauchy sequence,
we know that there is a subsequence $\left(f_{n_{k}}\right)_{k\in\mathbb{N}}$
such that we have $\Vert f_{n_{k+1}}-f_{n_{k}}\vert M_{u,p}\left(\omega\right)\Vert\leq\frac{1}{2^{k}}$
for $k=1,2,\ldots$. We set $g:=\sum_{k=1}^{\infty}\vert f_{n_{k+1}}-f_{n_{k}}\vert$,
which obviously is non-negative and measurable and show that $g\in M_{u,p}\left(\omega\right)$
as follows:
\begin{eqnarray}
\Vert g\vert M_{u,p}\left(\omega\right)\Vert & = & \sup_{B}\omega\left(B\right)^{1/u-1/p}\left(\int_{B}\left|g\left(x\right)\right|^{p}\omega\left(x\right)dx\right)^{1/p}\nonumber \\
 & = & \sup_{B}\omega\left(B\right)^{1/u-1/p}\left(\int_{B}\left(\sum_{k=1}^{\infty}\left|f_{n_{k+1}}\left(x\right)-f_{n_{k}}\left(x\right)\right|\right)^{p}\omega\left(x\right)dx\right)^{1/p}\nonumber \\
 & = & \sup_{B}\lim_{m\rightarrow\infty}\omega\left(B\right)^{1/u-1/p}\left(\int_{B}\left(\sum_{k=1}^{m}\left|f_{n_{k+1}}\left(x\right)-f_{n_{k}}\left(x\right)\right|\right)^{p}\omega\left(x\right)dx\right)^{1/p}\label{eq:gleqinfty}\\
 & = & \sup_{B}\lim_{m\in\mathbb{N}}\ldots=\lim_{m\in\mathbb{N}}\sup_{B}\ldots\nonumber \\
 & = & \lim_{m\rightarrow\infty}\Vert\sum_{k=1}^{m}\left|f_{n_{k+1}}\left(x\right)-f_{n_{k}}\left(x\right)\right|\vert M_{u,p}\left(\omega\right)\Vert\nonumber \\
 & \leq & \lim_{m\rightarrow\infty}\sum_{k=1}^{m}\Vert f_{n_{k+1}}\left(x\right)-f_{n_{k}}\left(x\right)\vert M_{u,p}\left(\omega\right)\Vert\label{eq:ieq1}\\
 & \leq & 1.\nonumber 
\end{eqnarray}
Since $g$ is non-negative and measurable we can use the Lebesgue
monotone convergence theorem (\cite[App. 1, Thm. 2]{H. Triebel}),
which here allows us to change positions of limes and supremum. As
next step, we show that $g<\infty$ almost everywhere. In \eqref{eq:gleqinfty}
we see that $\int_{B}\left|g\left(x\right)\right|^{p}\omega\left(x\right)dx=\int_{\mathbb{R}^{n}}\left|g\left(x\right)\right|^{p}\chi_{B}\left(x\right)\omega\left(x\right)dx<\infty$
for all balls $B$. We can deduce that $g\chi_{B}$ is finite with
respect to the Lebesgue measure almost everywhere or that there is
a set with measure zero, i.e. $\left|N_{R,x_{0}}\right|=0$, such
that $g\left(x\right)\chi_{B}\left(x\right)<\infty$ for all $x\notin N_{R,x_{0}}$
if we denote $B=B\left(R,x_{0}\right)$. Now we define
\[
N:=\bigcup_{R\in Q_{+},x_{0}\in\mathbb{Q}^{n}}N_{R,x_{0}},
\]
and see $0\leq\mu\left(N\right)\leq\sum_{R\in Q_{+},x_{0}\in\mathbb{Q}^{n}}\mu\left(N_{r,x_{0}}\right)=0$,
which makes $N$ itself a zero set with respect to the Lebesgue measure.
This however yields that $g<\infty$ almost everywhere, because for
every $y\in\mathbb{R}^{n}\backslash N$ there are $R\in\mathbb{Q}_{+}$
and $x_{0}\in\mathbb{Q}^{n}$ such that $y\in B_{R}\left(x_{0}\right)$.
By choice of $y$ and $N$ we know that $y\notin N_{R,x_{0}}$ and
thus $g\left(y\right)<\infty$. With that in mind, we define a set
$H:=\left\{ x\in\mathbb{R}^{n}:\,0\leq g<\infty\right\} $. By our
above considerations $H$ is measurable and $\left|\mathbb{R}^{n}\backslash H\right|=0$.
Since we have shown $g\in M_{u,p}\left(\omega\right)$, we also know
that $g^{*}:=g\chi_{H}\in M_{u,p}\left(\omega\right)$. Now we define
$f$ as follows
\[
f:=\begin{cases}
f_{n_{1}}+\sum_{k=1}^{\infty}\left|f_{n_{k+1}}-f_{n_{k}}\right| & x\in H\\
0 & x\notin H
\end{cases}.
\]
Clearly $f\leq\left|f_{n_{1}}\right|+g^{*}$ and thus $f\in M_{u,p}\left(\omega\right)$.
The last thing we need for the proof of the completeness of $M_{u,p}\left(\omega\right)$,
is that $f_{n}\rightarrow f$ in $M_{u,p}\left(\omega\right)$. Because
$f_{n}$ is a Cauchy sequence, it is sufficient to show that $f_{n_{k}}\rightarrow f$
in $M_{u,p}\left(\omega\right)$. During this last estimation we will
again use the Lebesgue monotone convergence theorem.
\begin{eqnarray}
 &  & \Vert f-f_{n_{m}}\vert M_{u,p}\left(\omega\right)\Vert\nonumber \\
 & = & \sup_{B}\omega\left(B\right)^{1/u-1/p}\left(\int_{B}\left|\sum_{k=m}^{\infty}f_{n_{k+1}}\left(x\right)-f_{n_{k}}\left(x\right)\right|^{p}\omega\left(x\right)dx\right)^{1/p}\nonumber \\
 & \leq & \sup_{B}\omega\left(B\right)^{1/u-1/p}\lim_{l\rightarrow\infty}\left(\int_{B}\left|\sum_{k=m}^{l}f_{n_{k+1}}\left(x\right)-f_{n_{k}}\left(x\right)\right|^{p}\omega\left(x\right)dx\right)^{1/p}\nonumber \\
 & = & \lim_{l\rightarrow\infty}\Vert\sum_{k=m}^{l}f_{n_{k+1}}-f_{n_{k}}\vert M_{u,p}\left(\omega\right)\Vert\nonumber \\
 & \leq & \lim_{l\rightarrow\infty}\sum_{k=m}^{l}\Vert f_{n_{k+1}}-f_{n_{k}}\vert M_{u,p}\left(\omega\right)\Vert\leq\sum_{k=m}^{\infty}\frac{1}{2^{k}}\overset{\tiny m\rightarrow\infty}{\longrightarrow}0.\label{eq:ieq2}
\end{eqnarray}
\end{proof}
To prove the completeness of $M_{u,p}\left(\omega\right)$ for $p<1$
we modify the proof for $p\geq1$ by replacing every use of the triangle
inequality (i.e. \eqref{eq:ieq1} and \eqref{eq:ieq2}) by using a
corresponding $\varrho$-inequality instead (i.e. using $\Vert f+g\Vert^{\varrho}\leq\Vert f\Vert^{\varrho}+\Vert g\Vert^{\varrho}$
for some $\varrho\in\left(0,1\right]$. Here we apply the fact that
for every quasi-norm with constant $C\geq1$ there exists a corresponding
$\varrho$- norm with $\varrho\in\left(0,1\right]$. See \cite[Ch. 2, Thm. 1.1.]{R. DeVore; G.G. Lorentz}
for the proof). The remaining facts of Lemma \ref{lem:LemmaMorreySpaces}
(mostly concerning embeddings) will be answered in Section \ref{sec:Embeddings-of-weighted}. 

Another fact worth transferring to the weighted case is the following
lemma. The proof of the unweighted case was taken from \cite[Lem. 1.4.]{M. Rosenthal},
where a further reference to \cite[Lem. 1.4.]{H. Kozono; M. Yamazaki}
is made. 

\newpage{}
\begin{lem}
\label{lem:PreparationLemmaForuvinMup}Let $m\in\mathbb{N}$, $\omega$
be a weight and $\left(u_{j}\right)_{j=1}^{m}$, $\left(p_{j}\right)_{j=1}^{m}$
with $0<p_{j}\leq u_{j}<\infty$ for $j=1,\ldots,m$ . Furthermore
let $\sum_{j=1}^{m}\frac{1}{p_{j}}\leq\frac{1}{p}$ and $f_{j}\in M_{u_{j},p_{j}}\left(\omega\right)$
for $j=1,\ldots,m$. We define $\frac{1}{u}:=\sum_{j=1}^{m}\frac{1}{u_{j}}$.
Then for $f_{0}\in L_{\infty}$ we have
\[
f_{0}\cdot f_{1}\cdot\ldots\cdot f_{m}\in M_{u,p}\left(\omega\right).
\]
\end{lem}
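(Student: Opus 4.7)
The plan is to reduce the statement to a weighted version of the generalized Hölder inequality applied on every ball, and then let the Morrey norms of the factors absorb the resulting powers of $\omega(B)$.

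First I would handle the $L_{\infty}$ factor trivially: for any ball $B = B_R(x)$ one has the pointwise bound $|f_0 f_1\cdots f_m| \le \Vert f_0\vert L_\infty\Vert \cdot |f_1\cdots f_m|$, so I may assume $f_0\equiv 1$ and recover the general case by multiplying with $\Vert f_0\vert L_\infty\Vert$ at the end. Next, I set $q_j := p_j/p \ge 1$ for $j=1,\dots,m$; the hypothesis $\sum_j 1/p_j \le 1/p$ translates into $\sum_j 1/q_j \le 1$, so I may introduce an auxiliary exponent $q_{m+1}$ via $1/q_{m+1} := 1 - \sum_{j=1}^m 1/q_j \ge 0$ (which corresponds to an ``extra'' factor equal to $1$). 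Writing $|f_1\cdots f_m|^p = \prod_{j=1}^m |f_j|^{p}\cdot 1$ and applying the $(m+1)$-fold weighted Hölder inequality (iterating the two-factor version from Remark \ref{WeightedHoelder}) on the ball $B$ with weight $\omega$, I obtain
\[
\int_B |f_1\cdots f_m|^p\,\omega\,dx
\;\le\;\prod_{j=1}^m\left(\int_B |f_j|^{p q_j}\omega\,dx\right)^{1/q_j}\cdot \omega(B)^{1/q_{m+1}}
\;=\;\omega(B)^{\,1-p\sum_j 1/p_j}\prod_{j=1}^m\left(\int_B |f_j|^{p_j}\omega\,dx\right)^{p/p_j}.
\]

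Taking $p$-th roots and multiplying by $\omega(B)^{1/u-1/p}$, the accumulated power of $\omega(B)$ becomes
\[
\omega(B)^{1/u-1/p}\cdot \omega(B)^{1/p - \sum_j 1/p_j}
\;=\;\omega(B)^{\,1/u - \sum_j 1/p_j}.
\]
Now I use the definition of the Morrey (quasi-)norms of the $f_j$'s, namely
\[
\left(\int_B |f_j|^{p_j}\omega\,dx\right)^{1/p_j}
\;\le\;\omega(B)^{1/p_j-1/u_j}\Vert f_j\vert M_{u_j,p_j}(\omega)\Vert.
\]
Multiplying these over $j=1,\dots,m$ and using $\sum_j 1/u_j = 1/u$, the product contributes the factor $\omega(B)^{\sum_j 1/p_j - 1/u}$, which exactly cancels the power of $\omega(B)$ computed above. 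Taking the supremum over all balls $B$ yields
\[
\Vert f_0 f_1\cdots f_m \vert M_{u,p}(\omega)\Vert
\;\le\;\Vert f_0\vert L_\infty\Vert\,\prod_{j=1}^m \Vert f_j\vert M_{u_j,p_j}(\omega)\Vert,
\]
which in particular gives $f_0 f_1\cdots f_m\in M_{u,p}(\omega)$.

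The only non-routine point is the passage from the two-factor weighted Hölder inequality of Remark \ref{WeightedHoelder} to its $m$-factor version with the \emph{sub}-critical condition $\sum 1/q_j\le 1$ instead of equality; I handle this by the standard trick of inserting a constant ``dummy'' factor with exponent $q_{m+1}$, after which a short induction on $m$ (iterating the two-factor inequality) gives exactly the estimate used above. The bookkeeping of the powers of $\omega(B)$ is the heart of the argument, and the choice of $q_{m+1}$ is what makes the exponents balance, so this is the step I would double-check most carefully.
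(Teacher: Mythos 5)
Your proposal is correct and follows essentially the same route as the paper: a generalized weighted Hölder inequality on each ball $B$ (the paper organizes it as an explicit induction peeling off one factor at a time, you via a dummy constant factor with exponent $q_{m+1}$, which is the same thing), followed by the identical bookkeeping that cancels the powers of $\omega(B)$ against the factors $\omega(B)^{1/p_j-1/u_j}$ absorbed into the Morrey norms of the $f_j$. The only point the paper spells out that you leave implicit is the membership $f_0\cdots f_m\in L_p^{loc}(\omega)$, but as the paper itself notes this follows at once from the finiteness of the Morrey norm, so nothing is missing.
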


\begin{proof}
At first, we have $p\leq u$, since 
\[
\frac{1}{u}=\sum_{j=1}^{m}\frac{1}{u_{j}}\leq\sum_{j=1}^{m}\frac{1}{p_{j}}\leq\frac{1}{p}.
\]
With this choice of parameters $p$ and $u$ the space $M_{u,p}\left(\omega\right)$
is non-trivial (also recall Lemma \ref{Re: RepeatedRemark} (i)).
Now let $B$ be a ball. We have to show both $f_{0}\cdot f_{1}\cdot\ldots\cdot f_{m}\in M_{u,p}\left(\omega\right)$
and $f_{0}\cdot f_{1}\cdot\ldots\cdot f_{m}\in L_{p}^{loc}\left(\omega\right)$,
though by showing the first, the latter is implied. We begin by showing
that applying Hölder's inequality $n$-times yields the following:
\begin{equation}
\Vert f_{0}\cdot\ldots\cdot f_{n}\vert L_{p}\left(B\right)\Vert\leq\Vert f_{0}\vert L_{\infty}\left(B\right)\Vert\left(\prod_{i=1}^{m}\Vert f_{i}\vert L_{p_{i}}\left(B\right)\Vert\right)\Vert1\vert L_{1}\left(B\right)\Vert^{1/p-\sum_{j=1}^{m}\frac{1}{p_{j}}}.\label{eq:inequalityforLp1andsoonandsoforth}
\end{equation}
We do so, by induction. Clearly for $m=1$ we have 
\[
\Vert f_{0}\cdot f_{1}\vert L_{p}\left(B\right)\Vert\leq\Vert f_{0}\vert L_{\infty}\left(B\right)\Vert\Vert f_{1}\vert L_{p_{1}}\left(B\right)\Vert\left(\int_{B}1dx\right)^{1/p\cdot1/r'},
\]
where $r'$ is the dual to $r=\frac{p_{1}}{p}$ and hence $r'=\frac{p_{1}}{p_{1}-p}$.
The exponent then is
\[
\frac{1}{p}\cdot\frac{1}{r'}=\frac{1}{p}\cdot\frac{p_{1}-p}{p_{1}}=\frac{1}{p}-\frac{1}{p_{1}},
\]
and as a result $\left(\int_{B}1dx\right)^{1/p\cdot1/r'}=\left|B\right|^{1/p-1/p_{1}}$.
Let us now assume that \eqref{eq:inequalityforLp1andsoonandsoforth}
is true for a fixed $m\in\mathbb{N}$. We  show, that it also holds
for $m+1$:
\begin{eqnarray*}
 &  & \Vert f_{0}\cdot\ldots\cdot f_{m+1}\vert L_{p}\left(B\right)\Vert\\
 & \leq & \Vert f_{0}\vert L_{\infty}\left(B\right)\Vert\prod_{j=1}^{m}\Vert f_{j}\vert L_{p_{j}}\left(B\right)\Vert\left(\int_{B}\vert f_{m+1}\vert^{\left(\frac{1}{p}-\sum_{j=1}^{m}\frac{1}{p_{j}}\right)^{-1}}dx\right)^{\frac{1}{p}-\sum_{j=1}^{m}\frac{1}{p_{j}}}.
\end{eqnarray*}
By that, we get the parameter $r$ for the Hölder inequality as $r=p_{m+1}\left(\frac{1}{p}-\sum_{j=1}^{m}\frac{1}{p_{j}}\right)$.
Its dual then is $r'=\frac{p_{m+1}\left(\frac{1}{p}-\sum_{j=1}^{m}\frac{1}{p_{j}}\right)}{p_{m+1}\left(\frac{1}{p}-\sum_{j=1}^{m}\frac{1}{p_{j}}\right)-1}$.
By having a look at the outside exponent of the integral, we see that
\[
\frac{p_{m+1}\left(\frac{1}{p}-\sum_{j=1}^{m}\frac{1}{p_{j}}\right)-1}{p_{m+1}\left(\frac{1}{p}-\sum_{j=1}^{m}\frac{1}{p_{j}}\right)}\cdot\left(\frac{1}{p}-\sum_{j=1}^{m}\frac{1}{p_{j}}\right)=\frac{1}{p}-\sum_{j=1}^{m+1}\frac{1}{p_{j}},
\]
which concludes the induction. 

The Hölder inequality also holds for weighted $L_{p}$ spaces (see
Remark \ref{WeightedHoelder}). Hence, we can also use the upper induction
when we integrate the weighted volume of the ball, i.e. replacing
$\int_{B}1dx$ with $\int_{B}\omega\left(x\right)dx$. Doing so yields
the following chain of inequalities
\begin{eqnarray*}
 &  & \Vert f_{0}\cdot\ldots\cdot f_{m}\vert M_{u,p}\left(\omega\right)\Vert\\
 & = & \sup_{B}\omega\left(B\right)^{1/u-1/p}\cdot\left(\int_{B}\vert f_{0}\cdot\ldots\cdot f_{m}\vert^{p}\omega\left(x\right)dx\right)^{1/p}\\
 & \leq & \Vert f_{0}\vert L_{\infty}\Vert\sup_{B}c_{p}\cdot\omega\left(B\right)^{\left(\sum_{j=1}^{n}1/u_{j}\right)-1/p}\cdot\prod_{i=1}^{m}\Vert f_{i}\vert L_{p_{i}}\left(B,\omega\right)\Vert\omega\left(B\right)^{1/p-\sum_{j=1}^{m}1/p_{j}}\\
 & = & C_{p}\Vert f\vert L_{\infty}\Vert\sup_{B}\omega\left(B\right)^{\left(\sum_{j=1}^{m}1/u_{j}-1/p_{j}\right)}\cdot\prod_{i=1}^{m}\Vert f_{i}\vert L_{p_{i}}\left(B,\omega\right)\Vert\\
 & \leq & C\Vert f_{0}\vert L_{\infty}\Vert\prod_{i=1}^{m}\Vert f_{i}\vert M_{u_{i},p_{i}}\left(\omega\right)\Vert,
\end{eqnarray*}
which concludes the first step of the proof.

The next step is to show that $f_{0}\cdot f_{1}\cdot\ldots\cdot f_{m}\in L_{p}^{loc}\left(\omega\right)$.
Let $K$ be any compact set, then we can choose $R^{*}>0$ and $x^{*}\in\mathbb{R}^{n}$,
such that $K\subset B_{R^{*}}\left(x^{*}\right)$. Furthermore we
have
\begin{eqnarray*}
 &  & \omega\left(B_{R^{*}}\left(x^{*}\right)\right)^{1/u-1/p}\left(\int_{B_{R^{*}}\left(x^{*}\right)}\left|f_{0}\left(x\right)\cdot\ldots\cdot f_{m}\left(x\right)\right|^{p}\omega\left(x\right)dx\right)^{1/p}\\
 & \leq & \sup_{B}\omega\left(B\right)^{1/u-1/p}\left(\int_{B_{R}}\left|f_{0}\left(x\right)\cdot\ldots\cdot f_{m}\left(x\right)\right|^{p}\omega\left(x\right)dx\right)^{1/p}\\
 & \leq & \Vert f_{0}\cdot\ldots\cdot f_{m}\vert M_{u,p}\left(\omega\right)\Vert\leq C\Vert f_{0}\vert L_{\infty}\Vert\Vert f_{1}\vert M_{u_{1},p_{1}}\left(\omega\right)\Vert\cdot\ldots\cdot\Vert f_{m}\vert M_{u_{m},p_{m}}\left(\omega\right)\Vert,
\end{eqnarray*}
which we have just shown in the previous step. This chain of inequalities
however, shows that 
\[
\int_{K}\left|f_{0}\left(x\right)\cdot\ldots\cdot f_{m}\left(x\right)\right|^{p}\omega\left(x\right)dx\leq\int_{B_{R^{*}\left(x^{*}\right)}}\left|f_{0}\left(x\right)\cdot\ldots\cdot f_{m}\left(x\right)\right|^{p}\omega\left(x\right)dx<\infty,
\]
which concludes the second step and the proof.
\end{proof}
To prevent misunderstandings in Lemma \ref{lem:PreparationLemmaForuvinMup},
we quote from \cite[p. 137]{J. Duoandikoetxea} in the following remark.
\begin{rem}
Lemma \ref{lem:PreparationLemmaForuvinMup} sets $f_{0}\in L_{\infty}$
as a condition, despite the fact that $f_{1},\ldots,f_{n}$ belong
to the weighted spaces $L_{p_{i}}\left(\omega\right)$ for $i=1,\ldots,m$.
Surprisingly the space of essentially bounded functions is equal to
its weighted counterpart, i.e. $L_{\infty}=L_{\infty}\left(\omega\right)$
(including equivalence of norms) for Muckenhoupt weight. To see this,
we only to recall Remark \ref{Remark: AboutDuo}, where we showed
that $\omega\left(Q\right)=0$ if and only if $\left|Q\right|=0$.\newpage{}
\end{rem}

\begin{cor}
\label{cor:DatJanzeHochR}Let $0<r\leq p\leq u$, $\omega$ be a weight
and $f\in M_{u,p}$$\left(\omega\right)$, then 
\[
\left|f\right|^{r}\in M_{\frac{u}{r},\frac{p}{r}}\left(\omega\right)\,\mbox{ and }\,\Vert\left|f\right|^{r}|M_{\frac{u}{r},\frac{p}{r}}\left(\omega\right)\Vert=\Vert f|M_{u,p}\left(\omega\right)\Vert^{r}.
\]
\end{cor}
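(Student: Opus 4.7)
The plan is that this corollary should follow by direct, essentially bookkeeping computation from the definition of the quasi-norm on $M_{u,p}(\omega)$, without invoking any deeper result. I do not expect any real obstacle; the only things to be careful about are that the indices $p/r$ and $u/r$ are admissible and that $|f|^r$ actually lies in $L_{p/r}^{loc}(\omega)$.

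First I would observe that because $0 < r \leq p \leq u$ we have $0 < p/r \leq u/r$, so $M_{u/r,p/r}(\omega)$ is a well-defined (non-trivial) weighted Morrey space in the sense of Definition \ref{Def:WeightedMorreySpaces}. The local integrability $|f|^r \in L_{p/r}^{loc}(\omega)$ is immediate: for every compact $K$ we have $\int_K (|f|^r)^{p/r}\omega\,dx = \int_K |f|^p \omega\,dx < \infty$ because $f \in L_p^{loc}(\omega)$.

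The heart of the argument is then a one-line manipulation. Starting from the definition and using that the exponents simply rescale, I would write
\begin{align*}
\bigl\Vert |f|^r \,\bigl|\, M_{u/r,p/r}(\omega)\bigr\Vert
 &= \sup_{x\in\mathbb R^n,\,R>0} \omega(B_R(x))^{r/u-r/p}\left(\int_{B_R(x)} |f(y)|^{r\cdot p/r}\omega(y)\,dy\right)^{r/p} \\
 &= \sup_{x\in\mathbb R^n,\,R>0} \left[\omega(B_R(x))^{1/u-1/p}\left(\int_{B_R(x)} |f(y)|^{p}\omega(y)\,dy\right)^{1/p}\right]^{r} \\
 &= \Bigl(\sup_{x\in\mathbb R^n,\,R>0}\omega(B_R(x))^{1/u-1/p}\|f\,|\,L_p(B_R(x),\omega)\|\Bigr)^{r} \\
 &= \bigl\Vert f\,\bigl|\,M_{u,p}(\omega)\bigr\Vert^{r},
\end{align*}
where in the third equality I use that the map $t\mapsto t^r$ is monotone on $[0,\infty)$, so it commutes with the supremum. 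Since $f \in M_{u,p}(\omega)$ the right-hand side is finite, which in particular yields $|f|^r \in M_{u/r,p/r}(\omega)$, proving both claims simultaneously. The case $p = \infty$ (hence $u = \infty$) reduces to the trivial identity $\||f|^r\,|\,L_\infty(\omega)\| = \|f\,|\,L_\infty(\omega)\|^r$ via Lemma \ref{Re: RepeatedRemark} (ii).
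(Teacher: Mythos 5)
Your proof is correct and follows essentially the same route as the paper: the norm identity is an immediate exponent-rescaling from the definition, and the only substantive point is checking $|f|^r\in L_{p/r}^{loc}(\omega)$. Your verification of that last point is in fact slightly more direct than the paper's (you use $f\in L_p^{loc}(\omega)$ straight from the definition of $M_{u,p}(\omega)$, whereas the paper re-derives it from the Morrey norm via a ball containing the compact set), but both amount to the same trivial observation.
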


\begin{proof}
The equivalence of the norms $\Vert\left|f\right|^{r}|M_{\frac{u}{r},\frac{p}{r}}\left(\omega\right)\Vert=\Vert f|M_{u,p}\left(\omega\right)\Vert^{r}$
immediately follows by Definition \ref{Def:WeightedMorreySpaces}.
Also $\Vert\left|f\right|^{r}|M_{\frac{u}{r},\frac{p}{r}}\left(\omega\right)\Vert<\infty$
is deduced from that, so that the only thing left to prove is $\left|f\right|^{r}\in L_{\frac{p}{r}}^{loc}\left(\omega\right)$,
which we will show like in Lemma \ref{lem:PreparationLemmaForuvinMup}.
Let $K$ be a compact set, for which we choose $R^{*}>0$ and $x^{*}\in\mathbb{R}^{n}$
such that $K\subset B_{R^{*}}\left(x^{*}\right)$. Thus
\begin{eqnarray*}
 &  & \omega\left(B_{R^{*}\left(x^{*}\right)}\right)^{\frac{r}{u}-\frac{r}{p}}\left(\int_{B_{R^{*}}\left(x^{*}\right)}\left(\left|f\left(x\right)\right|^{r}\right)^{\frac{p}{r}}\omega\left(x\right)dx\right)^{\frac{r}{p}}\\
 & \leq & \sup_{B}\omega\left(B\right)^{\frac{r}{u}-\frac{r}{p}}\left(\int_{B}\left(\left|f\left(x\right)\right|^{r}\right)^{\frac{p}{r}}\omega\left(x\right)dx\right)^{\frac{r}{p}}\\
 & = & \Vert f\vert M_{u,p}\left(\omega\right)\Vert^{r},
\end{eqnarray*}
from where we know that 
\[
\left(\int_{K}\left(\left|f\left(x\right)\right|^{r}\right)^{\frac{p}{r}}\omega\left(x\right)dx\right)^{\frac{r}{p}}\leq\left(\int_{B_{R^{*}}\left(x^{*}\right)}\left(\left|f\left(x\right)\right|^{r}\right)^{\frac{p}{r}}\omega\left(x\right)dx\right)<\infty
\]
and thus $\left|f\right|^{r}\in L_{\frac{p}{r}}^{loc}\left(\omega\right)$.
Hence $\left|f\right|^{r}\in M_{\frac{u}{r},\frac{p}{r}}\left(\omega\right)$.
\end{proof}
\begin{rem}
Some of the papers we quote and use for the following text refer to
weighted Morrey spaces when using cubes $Q=Q_{r}\left(x_{0}\right)$
instead of balls $B=B_{R}\left(x_{0}\right)$ (e.g. \cite{Y. Komori; S. Shirai}).
In some articles and papers, (weighted) Morrey spaces are introduced
with the quasi-norm
\[
\Vert f\vert M_{u,p}\left(\omega\right)\Vert^{*}=\sup_{x\in\mathbb{R}^{n},r>0}\omega\left(Q_{r}\left(x\right)\right)^{1/u-1/p}\left(\int_{Q_{r}\left(x\right)}\left|f\left(y\right)\right|^{p}\omega\left(y\right)dy\right)^{1/p}
\]
(again $\Vert\cdot\vert M_{u,p}\left(\omega\right)\Vert^{*}$ is a
norm for $p\geq1$ and a quasi-norm for $0<p<1$). We will quickly
show that this quasi-norm is equivalent to the previously defined
one, i.e. $\Vert f\vert M_{u,p}\left(\omega\right)\Vert\sim\Vert f\vert M_{u,p}\left(\omega\right)\Vert^{*}$.
Recall that $\int_{Q_{r}\left(x\right)}1dx=r^{n}$ and $\int_{B_{R}\left(x\right)}1dx=\frac{\pi^{n/2}}{\Gamma\left(\frac{n}{2}+1\right)}R^{n},$
where $\Gamma$ is the gamma function. Obviously we will always find
$c,C>0$ such that $cR^{n}\leq r^{n}\leq CR^{n}$, since the dimension
$n$ is fixed. Roughly speaking, this means that we can always find
two $n$-dimensional balls, such that a fixed $n$-dimensional cube
contains a smaller ball and is contained in a bigger ball or vice
versa. From here it is easy to see that this implies $\omega\left(Q_{r}\left(x\right)\right)\sim\omega\left(B_{R}\left(x\right)\right)$
and $\int_{Q_{r}\left(x\right)}\left|f\left(y\right)\right|^{p}\omega\left(y\right)dy\sim\int_{B_{R}\left(x\right)}\left|f\left(y\right)\right|^{p}\omega\left(y\right)dy$
respectively, since both $\omega\geq0$ and $\left|f\right|\geq0$
a.e.. Henceforth we shall use both norms (i.e. $\Vert\cdot\vert M_{u,p}\left(\omega\right)\Vert$
and $\Vert\cdot\vert M_{u,p}\left(\omega\right)\Vert^{*}$) equivalently,
depending on which one suits the current purpose better.\newpage{}
\end{rem}

\section{\label{sec:Section5}Boundedness of the Hardy-Littlewood Maximal
Operator on $M_{u,p}\left(\omega\right)$}

In analogy to Theorem \ref{thm:MainTheoremAp} we will now have a
look at the Hardy-Littlewood maximal operator (and the weighted maximal
operator) and show its boundedness as a mapping from a weighted Morrey
space to itself. 
\begin{defn}
\label{Def: weightedMaximalOperator} Let $\omega$ be a weight. Then
we define the weighted maximal operator as
\[
\mathcal{M}_{\omega}f\left(x\right)=\sup_{Q\ni x}\frac{1}{\omega\left(Q\right)}\int_{Q}\vert f\left(y\right)\vert\omega\left(y\right)dy.
\]
Now, as we have seen before in Corollary \ref{cor:ApDoublingMeasure},
we know that every Muckenhoupt weight $\omega\in A_{p}$ is also a
doubling measure. There is one more property of doubling measures
we will need in the main proof of this section. It is taken from \cite[Lem. 4.1.]{Y. Komori; S. Shirai}.
\end{defn}

\begin{lem}
\label{lem:LemmaDoubling}Let $\omega$ be a doubling measure, then
there is a constant $D>1$ such that
\[
D\omega\left(Q\right)\leq\omega\left(2Q\right).
\]
\end{lem}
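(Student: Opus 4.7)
The proof plan is to establish this ``reverse doubling'' inequality by extracting a uniformly positive share of the $\omega$-mass of $Q$ from the shell $2Q\setminus Q$. The doubling hypothesis provides the upper bound $\omega(2Q)\le C\omega(Q)$ for some constant $C>1$, and since $Q\subseteq 2Q$ one has trivially $\omega(2Q)\ge \omega(Q)$; the task is to improve this to a strict gap with a constant independent of $Q$.

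First I would locate a cube $Q'\subset 2Q\setminus Q$ whose size and position depend only on $n$. Writing $Q=Q_{s}(x_{0})$, a convenient choice is
\[
Q':=Q_{s/2}\!\left(x_{0}+\left(\tfrac{3s}{4},\ldots,\tfrac{3s}{4}\right)\right),
\]
the ``corner cube'' of $2Q$ of side $s/2$ placed at the outer corner furthest from the center of $Q$. A direct $\ell_{\infty}$ check shows that $Q'\subset 2Q\setminus Q$ up to a common boundary of Lebesgue measure zero, and that the $\ell_{\infty}$-distance from the center of $Q'$ to the farthest point of $Q$ equals $5s/4$.

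Next I would bound $\omega(Q')$ from below by iterating the doubling property. Applied to cubes --- which is equivalent to the ball formulation in Definition \ref{Def: DoublingMeasure} up to a dimensional factor, since any cube is sandwiched between two concentric balls of comparable radii --- doubling reads $\omega(2A)\le C\omega(A)$ for every cube $A$, so by iteration $\omega(2^{k}Q')\le C^{k}\omega(Q')$ for every integer $k\ge 1$. The requirement $Q\subset 2^{k}Q'$ translates into $2^{k-2}s\ge 5s/4$, i.e. $2^{k}\ge 5$, which already holds for $k=3$ (an absolute constant independent of $Q$). Consequently
\[
\omega(Q)\le \omega(2^{k}Q')\le C^{k}\omega(Q'),
\]
whence $\omega(Q')\ge C^{-k}\omega(Q)$.

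Finally, since $Q$ and $Q'$ are essentially disjoint subsets of $2Q$, additivity of $\omega$ yields
\[
\omega(2Q)\ge \omega(Q)+\omega(Q')\ge \left(1+C^{-k}\right)\omega(Q),
\]
so the lemma holds with $D:=1+C^{-k}>1$. The main delicate point I expect is making sure the number $k$ of doublings can be chosen uniformly in $Q$; this is guaranteed because the construction is scale invariant --- both the side length of $Q'$ and its distance to $Q$ scale linearly with $s$ --- so the required $k$ is an absolute dimensional constant, and the resulting $D$ is independent of $Q$.
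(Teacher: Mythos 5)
Your proposal is correct and follows essentially the same route as the paper: both arguments place a subcube of side half that of $Q$ inside $2Q\setminus Q$, use additivity to get $\omega(2Q)\ge\omega(Q)+\omega(Q')$, and then apply the doubling condition three times (the paper via $Q\subset 5Q'$, you via $Q\subset 2^{3}Q'$) to obtain $\omega(Q')\ge C^{-3}\omega(Q)$ and hence $D=1+C^{-3}$. Your version is slightly more careful in making the corner-cube geometry explicit and in noting the passage from the ball formulation of doubling to cubes, which the paper's proof glosses over.
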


\begin{proof}
We begin by fixing a cube $Q:=Q_{r}\left(x_{0}\right)$. Then we can
choose a cube $K\subset2Q$ with side length $\frac{r}{2}$, which
is disjoint from $Q$. We get
\[
\omega\left(Q\right)+\omega\left(K\right)\leq\omega\left(2Q\right).
\]
For that same $K$, we get $Q\subset5K$, which yields $\omega\left(Q\right)\leq\omega\left(5K\right)\leq C^{3}\omega\left(K\right)$,
where the doubling constant is $C>0$ (see Definition \ref{Def: DoublingMeasure}).
Inserting this in the above estimation yields
\begin{eqnarray*}
\left(1+\frac{1}{C^{3}}\right)\omega\left(Q\right) & = & \omega\left(Q\right)+\frac{\omega\left(Q\right)}{C^{3}}\\
 & \leq & \omega\left(Q\right)+\omega\left(K\right)\\
 & \leq & \omega\left(2Q\right),
\end{eqnarray*}
where $D:=\left(1+\frac{1}{C^{3}}\right)>1$ is the desired constant.

We will also need another lemma to prove the main theorem of this
chapter, . This fact is taken from \cite[Lem. 4.2. (2)]{Y. Komori; S. Shirai}
with further reference to \cite[Sect. 9.1.1.]{L. Grafakos }.
\end{proof}
\begin{lem}
\label{lem:MunweightMweight} Let $\omega\in A_{p}$ for some $1<p<\infty$.
Then
\[
\mathcal{M}f\left(x\right)\leq C\mathcal{M}_{\omega}\left(\vert f\vert^{p}\right)\left(x\right)^{1/p},\,x\in\mathbb{R}^{n}.
\]
\end{lem}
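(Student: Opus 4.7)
The plan is to exploit Hölder's inequality together with the $A_p$ condition (Definition \ref{Def: Muckenhoupt}) to compare the unweighted average of $|f|$ over a set with the weighted average of $|f|^p$. The passage from balls (used in $\mathcal{M}$) to cubes (used in $\mathcal{M}_\omega$) is essentially cosmetic: every ball $B_R(x)$ is contained in a paraxial cube $Q \ni x$ with $|Q| \sim |B_R(x)|$ and, by Corollary \ref{cor:ApDoublingMeasure} (the doubling property), also $\omega(Q) \sim \omega(B_R(x))$. Hence it suffices to estimate the mean of $|f|$ on an arbitrary cube $Q$ containing $x$ and then take the supremum.

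First I would fix such a cube $Q \ni x$ and split the integrand in the unweighted mean using the weight:
\[
\frac{1}{|Q|} \int_Q |f(y)|\,dy \;=\; \frac{1}{|Q|} \int_Q \bigl(|f(y)|\,\omega(y)^{1/p}\bigr)\bigl(\omega(y)^{-1/p}\bigr)\,dy.
\]
Applying Hölder's inequality with the dual exponents $p$ and $p'$ yields
\[
\frac{1}{|Q|} \int_Q |f(y)|\,dy \;\leq\; \frac{1}{|Q|}\Bigl(\int_Q |f(y)|^p \omega(y)\,dy\Bigr)^{1/p} \Bigl(\int_Q \omega(y)^{-p'/p}\,dy\Bigr)^{1/p'}.
\]

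Next I would feed in the $A_p$ condition \eqref{eq:MuckenHouptDef}, which can be rewritten as
\[
\Bigl(\int_Q \omega(y)^{-p'/p}\,dy\Bigr)^{1/p'} \;\leq\; A_p(\omega)^{1/p}\,\frac{|Q|}{\omega(Q)^{1/p}}.
\]
Substituting this bound into the previous display, the factors of $|Q|$ cancel and I obtain
\[
\frac{1}{|Q|} \int_Q |f(y)|\,dy \;\leq\; A_p(\omega)^{1/p}\Bigl(\frac{1}{\omega(Q)}\int_Q |f(y)|^p \omega(y)\,dy\Bigr)^{1/p} \;\leq\; A_p(\omega)^{1/p}\,\mathcal{M}_\omega(|f|^p)(x)^{1/p},
\]
where in the last step I use that $Q$ contains $x$, so the inner expression is dominated by the supremum defining $\mathcal{M}_\omega(|f|^p)(x)$.

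Finally I would take the supremum on the left over all balls centered at $x$ (equivalently, all cubes containing $x$, up to a constant from the ball/cube comparison above), giving the desired pointwise inequality with some constant $C = C(n,p,A_p(\omega))$. The only mild obstacle is bookkeeping the ball-cube passage, but this amounts to absorbing a dimensional constant into $C$ and is harmless because $\omega$ is doubling.
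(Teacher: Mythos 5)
Your argument is correct, but it follows a genuinely different route from the one in the thesis. You derive the pointwise bound directly from the defining inequality \eqref{eq:MuckenHouptDef}: Hölder's inequality with exponents $p$ and $p'$ followed by the $A_{p}$ condition gives
\[
\frac{1}{\left|Q\right|}\int_{Q}\left|f\left(y\right)\right|dy\leq A_{p}\left(\omega\right)^{1/p}\left(\frac{1}{\omega\left(Q\right)}\int_{Q}\left|f\left(y\right)\right|^{p}\omega\left(y\right)dy\right)^{1/p},
\]
which is precisely the ``$\Longrightarrow$'' direction of Lemma \ref{lem:AlternativeDarstellungFuerAp} applied to $\left|f\right|$ --- you have in effect re-proved that lemma inline. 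The thesis instead fixes a ball $B$, dominates the average $\frac{1}{\left|B\right|}\int_{B}\left|f\right|$ pointwise on $B$ by $\mathcal{M}\left(f\chi_{B}\right)\left(y\right)$, integrates the $p$-th power against $\omega$ over $B$, and invokes the weighted maximal theorem (Theorem \ref{thm:MainTheoremAp}) before dividing by $\omega\left(B\right)$. Your version is the more elementary one: it uses only Hölder and the definition of $A_{p}$, rather than the reverse Hölder inequality and Marcinkiewicz interpolation hiding behind Theorem \ref{thm:MainTheoremAp}, and it yields the explicit constant $A_{p}\left(\omega\right)^{1/p}$ up to a dimensional factor; the paper's proof is shorter only because the maximal theorem is already in place. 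One small bookkeeping remark: \eqref{eq:MuckenHouptDef} is stated for balls, so applying it to a cube $Q$ tacitly uses the standard equivalence of the ball and cube formulations of $A_{p}$. Alternatively, run Hölder and the $A_{p}$ condition on the ball $B_{R}\left(x\right)$ itself and pass to a circumscribed cube only at the last step, where doubling (Corollary \ref{cor:ApDoublingMeasure}) gives $\omega\left(Q\right)\leq C\omega\left(B_{R}\left(x\right)\right)$ and hence $\frac{1}{\omega\left(B\right)}\int_{B}\left|f\right|^{p}\omega\leq C\,\mathcal{M}_{\omega}\left(\left|f\right|^{p}\right)\left(x\right)$. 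Either way the constants are harmless and the proof stands.
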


\begin{proof}
At first we fix a ball $B$ and let $f\in L_{p}\left(\omega\right)$.
We know that \eqref{eq:MaximalInequality} holds for $f$. Also we
clearly have
\begin{equation}
\frac{1}{\left|B\right|}\int_{B}\left|f\left(x\right)\right|dx\leq\mathcal{M}\left(f\chi_{B}\right)\left(y\right)\label{eq:Inequality?}
\end{equation}
for all $y\in B$. We weight-integrate the above to the $p$-th power
over all $y\in B$ and apply Theorem \ref{thm:MainTheoremAp} to get
\begin{eqnarray*}
\int_{B}\left(\frac{1}{\left|B\right|}\int_{B}\left|f\left(x\right)\right|dx\right)^{p}\omega\left(y\right)dy & = & \omega\left(B\right)\left(\frac{1}{\left|B\right|}\int_{B}\left|f\left(x\right)\right|dx\right)^{p}\\
 & \overset{\tiny\eqref{eq:Inequality?}}{\leq} & \int_{B}\mathcal{M}\left(f\chi_{B}\right)^{p}\left(y\right)dy\\
 & \overset{\tiny\mbox{Thm. }\ref{thm:BasicApFacts}}{\leq} & C\int_{B}\left|f\left(y\right)\right|^{p}\omega\left(y\right)dy.
\end{eqnarray*}
The lemma follows after dividing by $\omega\left(B\right)$, taking
the $1/p$-th power of the inequality and afterwards taking the supremum
over all such balls $B\ni y$.
\end{proof}
Another result that we need in particular is stated in the following
proposition. It is similar to Theorem \ref{thm:MainTheoremAp} and
shows that the weighted maximal operator is bounded from $L_{p}\left(\omega\right)$
to itself, whenever $\omega$ is a doubling measure. However we won't
prove it but rather refer to \cite[Ch. II; Thm. 2.6.]{J. Garcia-Cuerva; J.L. Rubio de Francia}
for further references.
\begin{prop}
\label{prop:BoundednessOfMweight}Let $\omega$ be a regular positive
Borel measure in $\mathbb{R}^{n}$ which also is a doubling measure.
Then for every $1<p<\infty$, there is a constant $C_{p}>0$ such
that for every $f\in L_{p}\left(\omega\right)$
\[
\int_{\mathbb{R}^{n}}\left(\mathcal{M}_{\omega}f\left(x\right)\right)^{p}\omega\left(x\right)dx\leq C_{p}\left(\int_{\mathbb{R}^{n}}\left|f\left(x\right)\right|^{p}\omega\left(x\right)dx\right).
\]
\end{prop}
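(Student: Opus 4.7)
The plan is to follow the template used for Theorem \ref{thm:MainTheoremAp}: establish two endpoint mapping properties of $\mathcal{M}_{\omega}$ and interpolate. Specifically, I would first verify that $\mathcal{M}_{\omega}$ is trivially bounded from $L_{\infty}\left(\omega\right)$ to itself and that it is of weak type $\left(1,1\right)$ with respect to the measure $d\mu\left(x\right)=\omega\left(x\right)dx$. Then applying the Marcinkiewicz interpolation theorem (Theorem \ref{MarcinkiewiczInterpolationTheorem}, in the version for abstract measure spaces alluded to in the remark after its statement) with $p_{0}=1$ and $p_{1}=\infty$ yields the strong-type $\left(p,p\right)$ estimate for every $1<p<\infty$.

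The $L_{\infty}\left(\omega\right)$ endpoint is immediate, since for every cube $Q$
\[
\frac{1}{\omega\left(Q\right)}\int_{Q}\left|f\left(y\right)\right|\omega\left(y\right)dy\leq\Vert f\vert L_{\infty}\left(\omega\right)\Vert,
\]
hence $\Vert\mathcal{M}_{\omega}f\vert L_{\infty}\left(\omega\right)\Vert\leq\Vert f\vert L_{\infty}\left(\omega\right)\Vert$. For the weak $\left(1,1\right)$ bound, set $E_{\alpha}:=\left\{ x\in\mathbb{R}^{n}:\mathcal{M}_{\omega}f\left(x\right)>\alpha\right\}$ for $\alpha>0$ and, for each $x\in E_{\alpha}$, select a cube $Q_{x}\ni x$ with $\int_{Q_{x}}\left|f\right|\omega>\alpha\,\omega\left(Q_{x}\right)$. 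A Vitali-type covering argument then extracts a countable pairwise disjoint subfamily $\left\{ Q_{k}\right\} $ with $E_{\alpha}\subset\bigcup_{k}3Q_{k}$ up to an $\omega$-null set. Iterating the doubling hypothesis gives $\omega\left(3Q_{k}\right)\leq C\,\omega\left(Q_{k}\right)$ uniformly in $k$, whence
\[
\omega\left(E_{\alpha}\right)\leq\sum_{k}\omega\left(3Q_{k}\right)\leq C\sum_{k}\omega\left(Q_{k}\right)\leq\frac{C}{\alpha}\sum_{k}\int_{Q_{k}}\left|f\right|\omega\leq\frac{C}{\alpha}\int_{\mathbb{R}^{n}}\left|f\left(y\right)\right|\omega\left(y\right)dy,
\]
using the pairwise disjointness of $\left\{ Q_{k}\right\} $ in the last step. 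This is the desired weak-type $\left(1,1\right)$ inequality.

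The main obstacle is the Vitali selection itself, because a priori the family $\left\{ Q_{x}\right\} _{x\in E_{\alpha}}$ need not have uniformly bounded side length. One resolves this by noting that for $f\in L_{1}\left(\omega\right)$ and fixed $\alpha>0$, any cube $Q$ with $\omega\left(Q\right)>\alpha^{-1}\Vert f\vert L_{1}\left(\omega\right)\Vert$ automatically satisfies $\omega\left(Q\right)^{-1}\int_{Q}\left|f\right|\omega\leq\alpha$, so the selected cubes $Q_{x}$ have $\omega$-mass, and therefore (by Remark \ref{Remark: AboutDuo}) side lengths, bounded by a constant depending only on $\alpha$ and $\Vert f\vert L_{1}\left(\omega\right)\Vert$. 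After intersecting with an exhaustion of $\mathbb{R}^{n}$ by large cubes and passing to the limit via monotone convergence if necessary, the standard $3r$-covering lemma applies and the chain of inequalities above goes through. Combining the two endpoint bounds through Marcinkiewicz interpolation then concludes the proof.
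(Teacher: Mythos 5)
The paper itself does not prove this proposition -- it defers entirely to Garc\'ia-Cuerva and Rubio de Francia -- so your attempt at a self-contained argument is welcome, and the overall strategy (trivial $L_{\infty}\left(\omega\right)$ bound, weak-type $\left(1,1\right)$ via a covering argument and doubling, then Marcinkiewicz interpolation for the measure $d\mu=\omega\,dx$) is exactly the standard route and is sound in outline. The $L_{\infty}$ endpoint, the sublinearity of $\mathcal{M}_{\omega}$, the iteration of the doubling condition to get $\omega\left(3Q_{k}\right)\leq C\omega\left(Q_{k}\right)$, and the final summation over a disjoint family are all correct.

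The genuine gap is in your justification of the Vitali selection. You claim that the selected cubes, having $\omega$-mass at most $\alpha^{-1}\Vert f\vert L_{1}\left(\omega\right)\Vert$, therefore have uniformly bounded side length ``by Remark \ref{Remark: AboutDuo}''. This fails twice over: that remark concerns $A_{p}$ weights, whereas here $\omega$ is only assumed doubling; and even for an $A_{1}$ weight the implication is false -- for $\omega\left(x\right)=\left|x\right|^{-\gamma}$ with $0<\gamma<n$ (which is doubling), a cube of side length $\ell$ centered at distance $M\gg\ell$ from the origin has $\omega$-mass $\sim\ell^{n}M^{-\gamma}$, so cubes of mass $1$ can have arbitrarily large side length. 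A bound on $\omega\left(Q\right)$ controls the side length of $Q$ only among cubes meeting a fixed bounded set, and then via the \emph{reverse} doubling inequality of Lemma \ref{lem:LemmaDoubling} (iterated), not via the mass bound alone. Your parenthetical ``intersecting with an exhaustion of $\mathbb{R}^{n}$'' is in fact the essential step, not an optional afterthought: fix $R$, restrict to $E_{\alpha}\cap Q_{R}\left(0\right)$, observe that the cubes meeting $Q_{R}\left(0\right)$ with bounded mass have side length bounded in terms of $R$ by reverse doubling, run the covering argument to get $\omega\left(E_{\alpha}\cap Q_{R}\left(0\right)\right)\leq C\alpha^{-1}\Vert f\vert L_{1}\left(\omega\right)\Vert$ with $C$ independent of $R$, and let $R\rightarrow\infty$. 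Cleaner still, and the reason the proposition assumes $\omega$ is a \emph{regular} Borel measure: bound $\omega\left(K\right)$ for an arbitrary compact $K\subset E_{\alpha}$ by passing to a finite subcover and applying the finite greedy Vitali selection (which needs no bound on the radii), then take the supremum over such $K$ using inner regularity. With either repair the proof goes through.
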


\begin{proof}
See \cite[Ch. II; Thm. 2.6.]{J. Garcia-Cuerva; J.L. Rubio de Francia}
for the proof. 
\end{proof}
In the following theorems we will have a look at the boundedness of
the maximal operator from $M_{u,p}\left(\omega\right)$ to itself.
We follow \cite[Sect. 3]{Y. Komori; S. Shirai} closely (though differ
in notation) with the first theorem we quote being \cite[Thm. 3.1.]{Y. Komori; S. Shirai}. 
\begin{thm}
\label{thm:MaximalDoubling}If $1<p<u<\infty$, and $\omega$ a doubling-measure,
then the operator $\mathcal{M}_{\omega}$ is bounded on $M_{u,p}\left(\omega\right)$
to itself. 
\end{thm}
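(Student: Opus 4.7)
The plan is to fix an arbitrary ball (equivalently cube) $B = B_R(x_0)$ and estimate
\[
I(B) := \omega(B)^{1/u - 1/p} \left( \int_B |\mathcal{M}_\omega f(y)|^p \omega(y)\,dy \right)^{1/p}
\]
by $C\,\Vert f\vert M_{u,p}(\omega)\Vert$ with a constant independent of $B$. The standard device is the splitting $f = f_1 + f_2$ with $f_1 := f\chi_{2B}$ and $f_2 := f\chi_{(2B)^c}$. Since $\mathcal{M}_\omega$ is sublinear, $\mathcal{M}_\omega f \leq \mathcal{M}_\omega f_1 + \mathcal{M}_\omega f_2$, so it suffices to treat each piece separately.

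For the local part $f_1$, I would apply Proposition \ref{prop:BoundednessOfMweight} (valid because $\omega$ is doubling) to get
\[
\int_B |\mathcal{M}_\omega f_1|^p \omega\,dy \leq \int_{\mathbb{R}^n} |\mathcal{M}_\omega f_1|^p \omega\,dy \leq C_p \int_{2B} |f|^p \omega\,dy.
\]
Multiplying by $\omega(B)^{1/u - 1/p}$ and using the doubling inequality $\omega(2B) \leq C\,\omega(B)$ (which, since $1/u - 1/p < 0$, gives $\omega(B)^{1/u - 1/p} \leq C'\,\omega(2B)^{1/u - 1/p}$), the local contribution is bounded by $C\,\Vert f\vert M_{u,p}(\omega)\Vert$.

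The harder piece is the global part $f_2$: I would establish a pointwise bound for $x \in B$. The key geometric observation is that any ball $B' \ni x$ which meets $(2B)^c$ must have radius $\gtrsim R$ (comparing the distances of $x$ and of any point of $(2B)^c$ to the center $x_0$). Hence, for such $B'$, one can find $k \geq 1$ with $B' \subset 2^k B$ and, conversely, $2^k B \subset cB'$ for some universal $c$; iterated doubling then yields $\omega(2^k B) \sim \omega(B')$. Consequently
\[
\frac{1}{\omega(B')} \int_{B'} |f_2|\,\omega\,dy \leq \frac{C}{\omega(2^k B)} \int_{2^k B} |f|\,\omega\,dy,
\]
and taking the supremum over $B' \ni x$ gives
\[
\mathcal{M}_\omega f_2(x) \leq C \sup_{k \geq 1} \frac{1}{\omega(2^k B)} \int_{2^k B} |f(y)|\,\omega(y)\,dy.
\]
Weighted Hölder's inequality (Remark \ref{WeightedHoelder}) applied to each annular average yields
\[
\frac{1}{\omega(2^k B)} \int_{2^k B} |f|\,\omega \leq \omega(2^k B)^{-1/u}\,\Vert f\vert M_{u,p}(\omega)\Vert,
\]
and since $\omega(2^k B)$ is increasing in $k$ while $1/u > 0$, the supremum is attained (up to constants) at $k = 1$:
\[
\mathcal{M}_\omega f_2(x) \leq C\,\omega(2B)^{-1/u}\,\Vert f\vert M_{u,p}(\omega)\Vert.
\]

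Integrating this pointwise estimate, the global contribution is
\[
\omega(B)^{1/u - 1/p}\left(\int_B |\mathcal{M}_\omega f_2|^p \omega\,dy\right)^{1/p} \leq C\,\omega(B)^{1/u - 1/p}\,\omega(B)^{1/p}\,\omega(2B)^{-1/u}\Vert f\vert M_{u,p}(\omega)\Vert,
\]
and a final application of the doubling inequality $\omega(2B) \sim \omega(B)$ collapses the weight factors to a constant, yielding the required bound. Taking the supremum over all balls $B$ finishes the proof. The main obstacle is the geometric/doubling argument underlying the pointwise estimate for $\mathcal{M}_\omega f_2$: without an $A_p$ condition here, we rely solely on the doubling property both to control $\omega(2^k B)$ relative to $\omega(B')$ for shifted balls and, crucially, to cancel the factor $\omega(2B)^{-1/u}$ against $\omega(B)^{1/u}$ at the end.
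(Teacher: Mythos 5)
Your proposal is correct and follows essentially the same route as the paper's proof: the same near/far decomposition of $f$, Proposition \ref{prop:BoundednessOfMweight} together with the doubling property for the local part, and a pointwise weighted H\"older estimate combined with doubling for the far part. The only cosmetic difference is that you organize the geometry of the far part via dyadic dilates $2^{k}B$, whereas the paper takes the supremum directly over cubes $R$ with $Q\subset 3R$.
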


\begin{proof}
First we fix a cube $Q\subset\mathbb{R}^{n}$ and decompose $f=f_{1}+f_{2}$.
We choose $f_{1}:=f\chi_{3Q}$ and $f_{2}$ accordingly. It is easy
to check that $\mathcal{M}_{\omega}$ is a sublinear operator, since
\begin{eqnarray*}
\mathcal{M}_{\omega}\left(f+g\right)\left(y\right) & = & \sup_{Q\ni y}\frac{1}{\omega\left(Q\right)}\int_{Q}\left|f+g\right|\left(x\right)\omega\left(x\right)dx\\
 & \leq & \sup_{Q\ni y}\frac{1}{\omega\left(Q\right)}\int_{Q}\left(\left|f\right|+\left|g\right|\right)\left(x\right)\omega\left(x\right)dx\\
 & \leq & \sup_{Q\ni y}\frac{1}{\omega\left(Q\right)}\int_{Q}\left|f\right|\left(x\right)\omega\left(x\right)dx\\
 &  & +\sup_{Q\ni y}\frac{1}{\omega\left(Q\right)}\int_{Q}\left|g\right|\left(x\right)\omega\left(x\right)dx\\
 & = & \mathcal{M}_{\omega}\left(f\right)\left(y\right)+\mathcal{M}_{\omega}\left(g\right)\left(y\right).
\end{eqnarray*}
With that in mind and Minkowski's inequality (Lemma \ref{lem:HoelderUndMinkowski})
this yields
\begin{eqnarray*}
 &  & \left(\int_{Q}\mathcal{M}_{\omega}f\left(x\right)^{p}\omega\left(x\right)dx\right)^{1/p}\\
 & \leq & \underset{=:T_{1}}{\underbrace{\left(\int_{Q}\mathcal{M}_{\omega}f_{1}\left(x\right)^{p}\omega\left(x\right)dx\right)^{1/p}}}+\underset{=:T_{2}}{\underbrace{\left(\int_{Q}\mathcal{M}_{\omega}f_{2}\left(x\right)^{p}\omega\left(x\right)dx\right)^{1/p}}}.
\end{eqnarray*}
We will now have a look at $T_{1}$ and $T_{2}$ separately and begin
with the former. It is known that $\mathcal{M}_{\omega}$ is a bounded
operator on $L_{p}\left(\omega\right)$ (see Proposition \ref{prop:BoundednessOfMweight})
and since $p>1$
\begin{eqnarray}
T_{1} & \leq & C\left(\int_{\mathbb{R}^{n}}\mathcal{M}_{\omega}f_{1}\left(x\right)^{p}\omega\left(x\right)dx\right)^{1/p}\leq C\left(\int_{3Q}\left|f\left(x\right)\right|{}^{p}\omega\left(x\right)dx\right)^{1/p}\nonumber \\
 & \leq & C\Vert f\vert M_{u,p}\left(\omega\right)\Vert\omega\left(Q\right)^{1/p-1/u},\label{eq:T_1}
\end{eqnarray}
where the last step follows from the doubling measure properties of
$\omega$.

Let us now have a look at $T_{2}$ by considering some geometric properties.
For any $x\in Q$
\[
\mathcal{M}_{\omega}f_{2}\left(x\right)\leq\sup_{R:\,Q\subset3R}\frac{1}{\omega\left(R\right)}\int_{R}\left|f\left(y\right)\right|\omega\left(y\right)dy.
\]
Also for bounded domains, we have
\begin{eqnarray*}
 &  & \frac{1}{\omega\left(R\right)}\int_{R}\left|f\left(y\right)\right|\omega\left(y\right)dy\\
 & \leq & \left(\omega\left(R\right)^{\frac{p}{u}-1}\int_{R}\left|f\left(y\right)\right|{}^{p}\omega\left(y\right)dy\right)^{1/p}\left(\omega\left(R\right)^{1-\frac{p}{u}}\omega\left(R\right)^{-1}\right)^{1/p}\\
 & \leq & C\Vert f\vert M_{u,p}\left(\mathbb{R}^{n},\omega\right)\Vert\omega\left(Q\right)^{-1/u}
\end{eqnarray*}
whenever $Q\subset3R$. Again, we used that $\omega$ is a doubling
measure. This finally yields
\begin{eqnarray*}
T_{2} & \leq & C\cdot\Vert f\vert M_{u,p}\left(\mathbb{R}^{n},\omega\right)\Vert\omega\left(Q\right)^{-1/u}\left(\int_{Q}\omega\left(y\right)dy\right)^{1/p}\\
 & = & C\cdot\Vert f\vert M_{u,p}\left(\mathbb{R}^{n},\omega\right)\Vert\omega\left(Q\right)^{1/p-1/u}.
\end{eqnarray*}
This and \eqref{eq:T_1} ultimately complete the proof, after multiplying
both sides of the inequality with $\omega\left(Q\right)^{1/u-1/p}$
and taking the supremum over all $Q$.
\end{proof}
As we have mentioned before, the next theorem is taken from \cite[Thm. 3.2.]{Y. Komori; S. Shirai}.
\begin{thm}
If $1<p\leq u<\infty$, and $\omega\in A_{p}$, then the Hardy-Littlewood
maximal operator $\mathcal{M}$ is bounded on $M_{u,p}\left(\omega\right)$
to itself. If $1=p<u$ and $\omega\in A_{1}$, then $\mathcal{M}$
is of weak-type $\left(1,1\right)$.\newpage{}
\end{thm}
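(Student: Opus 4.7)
The plan is to mirror the proof of Theorem~\ref{thm:MaximalDoubling}, but with $\mathcal{M}_{\omega}$ replaced by the unweighted Hardy--Littlewood operator $\mathcal{M}$. Fix a cube $Q$ and decompose $f=f_{1}+f_{2}$ where $f_{1}:=f\chi_{3Q}$. By sublinearity of $\mathcal{M}$ and Minkowski's inequality in $L_{p}(Q,\omega)$,
\[
\left(\int_{Q}(\mathcal{M}f)^{p}\omega\right)^{1/p}\leq T_{1}+T_{2},\qquad T_{j}:=\left(\int_{Q}(\mathcal{M}f_{j})^{p}\omega\right)^{1/p}.
\]
For $T_{1}$ I would invoke Theorem~\ref{thm:MainTheoremAp}, which gives $\mathcal{M}:L_{p}(\omega)\rightarrow L_{p}(\omega)$ since $\omega\in A_{p}$; this yields $T_{1}\leq C\left(\int_{3Q}\vert f\vert^{p}\omega\right)^{1/p}\leq C\Vert f\vert M_{u,p}(\omega)\Vert\,\omega(3Q)^{1/p-1/u}$, and Corollary~\ref{cor:ApDoublingMeasure} then lets me replace $\omega(3Q)$ by $\omega(Q)$ up to a constant.

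The core of the argument is the off-diagonal term $T_{2}$, and this is the step I expect to be the main obstacle. The key geometric observation is that if $x\in Q$ and a cube $B\ni x$ intersects $(3Q)^{c}$, then necessarily $Q\subset 3B$: the side length of such a $B$ must exceed that of $Q$, and then the sup-norm triangle inequality does the rest. For such $B$, Hölder's inequality together with the $A_{p}$ condition~\eqref{eq:MuckenHouptDef} gives
\[
\frac{1}{\vert B\vert}\int_{B}\vert f\vert\,dy\leq\left(\frac{1}{\vert B\vert}\int_{B}\vert f\vert^{p}\omega\right)^{1/p}\left(\frac{1}{\vert B\vert}\int_{B}\omega^{-p'/p}\right)^{1/p'}\leq \frac{A_{p}(\omega)^{1/p}}{\omega(B)^{1/p}}\left(\int_{B}\vert f\vert^{p}\omega\right)^{1/p}.
\]
Bounding the last factor by the Morrey norm of $f$ and using the doubling property (from $Q\subset 3B$ one obtains $\omega(B)\geq c'\omega(Q)$, again by Corollary~\ref{cor:ApDoublingMeasure}) produces the uniform pointwise bound $\mathcal{M}f_{2}(x)\leq C\Vert f\vert M_{u,p}(\omega)\Vert\,\omega(Q)^{-1/u}$ for every $x\in Q$. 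Integrating over $Q$ yields $T_{2}\leq C\Vert f\vert M_{u,p}(\omega)\Vert\,\omega(Q)^{1/p-1/u}$; multiplying both estimates by $\omega(Q)^{1/u-1/p}$ and taking the supremum over $Q$ closes the first half.

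For the second assertion ($p=1$, $\omega\in A_{1}$) I would carry out the same splitting but replace Minkowski by a union bound on super-level sets: for $\alpha>0$,
\[
\omega(\{x\in Q:\mathcal{M}f(x)>\alpha\})\leq\omega(\{\mathcal{M}f_{1}>\alpha/2\})+\omega(\{x\in Q:\mathcal{M}f_{2}>\alpha/2\}).
\]
The first summand is controlled by the weak-type $(1,1)$ bound of Proposition~\ref{prop:MaximalOperator of weak-type Lp} applied with $d\mu=\omega\,dx$, together with doubling, giving a bound of order $\alpha^{-1}\Vert f\vert M_{u,1}(\omega)\Vert\,\omega(Q)^{1-1/u}$. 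For the second I would exploit the defining $A_{1}$ inequality $\omega(B)/\vert B\vert\leq A_{1}(\omega)\omega(y)$ a.e.~on $B$, which is the replacement of the Hölder step above and leads to $\frac{1}{\vert B\vert}\int_{B}\vert f\vert\,dy\leq\frac{A_{1}(\omega)}{\omega(B)}\int_{B}\vert f\vert\omega$. This again produces the same uniform pointwise bound $\mathcal{M}f_{2}(x)\leq C\Vert f\vert M_{u,1}(\omega)\Vert\,\omega(Q)^{-1/u}$ on $Q$, so the super-level set of $\mathcal{M}f_{2}$ inside $Q$ is either empty (for large $\alpha$) or has $\omega$-measure at most $\omega(Q)\leq(2C\Vert f\vert M_{u,1}(\omega)\Vert/\alpha)^{u}$, yielding $\alpha\,\omega(Q)^{1/u-1}\omega(\{\mathcal{M}f_{2}>\alpha/2\})\lesssim\Vert f\vert M_{u,1}(\omega)\Vert$ and hence the weak-type $(1,1)$ bound on the Morrey scale.
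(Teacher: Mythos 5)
Your argument is correct, but it takes a genuinely different route from the one in the thesis. For $1<p\leq u<\infty$ the thesis does not redo the local/far decomposition for $\mathcal{M}$: it first self-improves $\omega\in A_{p}$ to $\omega\in A_{r}$ with $1<r<p$ (Corollary \ref{cor:KorollarKleinereAp}), then uses the pointwise domination $\mathcal{M}f\leq C\,\mathcal{M}_{\omega}\left(\vert f\vert^{r}\right)^{1/r}$ of Lemma \ref{lem:MunweightMweight}, and finally reduces everything to the already established boundedness of $\mathcal{M}_{\omega}$ on $M_{u/r,p/r}\left(\omega\right)$ (Theorem \ref{thm:MaximalDoubling}) together with Corollary \ref{cor:DatJanzeHochR}. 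Your direct route --- $T_{1}$ via Theorem \ref{thm:MainTheoremAp}, and $T_{2}$ via Hölder plus the $A_{p}$ condition (your displayed estimate is exactly inequality \eqref{eq:LemmaResult} of Lemma \ref{lem:AlternativeDarstellungFuerAp} applied to $\vert f\vert$) combined with doubling --- avoids both the reverse Hölder self-improvement and the auxiliary operator $\mathcal{M}_{\omega}$, at the cost of repeating the geometry already used in the proof of Theorem \ref{thm:MaximalDoubling}; the key geometric fact you rely on, namely that a cube containing a point of $Q$ and meeting $\left(3Q\right)^{c}$ has side length at least that of $Q$ and hence satisfies $Q\subset3B$ and $\omega\left(B\right)\geq c\,\omega\left(Q\right)$, is valid and worth stating explicitly. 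For $p=1$ the thesis again proceeds differently, applying the Fefferman--Stein inequality with $\varphi=\omega\chi_{Q}$ and summing the decay of $\mathcal{M}\left(\omega\chi_{Q}\right)$ over the annuli $3^{j+1}Q\setminus3^{j}Q$; your substitute --- the weak-$\left(1,1\right)$ bound of Proposition \ref{prop:MaximalOperator of weak-type Lp} with $d\mu=\omega\,dx$ for the local piece, the uniform pointwise bound on $\mathcal{M}f_{2}$ for the far piece, and the closing dichotomy that the level set is either empty or contained in a cube with $\omega\left(Q\right)\leq\left(2C\Vert f\vert M_{u,1}\left(\omega\right)\Vert/\alpha\right)^{u}$ --- is a correct and rather more elementary way to arrive at the same Morrey-scale weak inequality $t\,\omega\left(Q\right)^{1/u-1}\omega\left(\left\{ x\in Q:\mathcal{M}f\left(x\right)>t\right\} \right)\leq C\Vert f\vert M_{u,1}\left(\omega\right)\Vert$. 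What the thesis's approach buys is reuse of Theorem \ref{thm:MaximalDoubling} and a template that transfers to other operators via pointwise domination; what yours buys is a self-contained proof that needs neither $\mathcal{M}_{\omega}$ nor the Fefferman--Stein inequality.
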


\begin{proof}
First let $1<p<u<\infty$. For easier notation during this proof,
we will write $\kappa:=1-\frac{p}{u}$. We see that $0<\kappa<1$.
Since $\omega\in A_{p}$ we know because of the reverse Hölder inequality
(Corollary \ref{cor:KorollarKleinereAp}) that there is $r$ such
that $1<r<p$ and $\omega\in A_{r}$ and together with Lemma \ref{lem:MunweightMweight}
and Theorem \ref{thm:MaximalDoubling} we get 
\begin{eqnarray*}
 &  & \left(\omega\left(Q\right)^{-\kappa}\int_{Q}\mathcal{M}f\left(x\right)^{p}\omega\left(x\right)dx\right)^{1/p}\\
 & \overset{\tiny\mbox{Lem.\,}\ref{lem:MunweightMweight}}{\leq} & C\cdot\left(\omega\left(Q\right)^{-\kappa}\int_{Q}\mathcal{M}_{\omega}\left(\vert f\vert^{r}\right)\left(x\right)^{p/r}\omega\left(x\right)dx\right)^{1/p}\\
 & \leq & C\cdot\Vert\mathcal{M}_{\omega}\left(\left|f\right|{}^{r}\right)|M_{u/r,p/r}\left(\omega\right)\Vert^{1/r}\\
 & \overset{\tiny\mbox{Thm. }\ref{thm:MaximalDoubling}}{\leq} & C\cdot\Vert\left|f\right|^{r}\vert M_{u/r,p/r}\left(\omega\right)\Vert^{1/r}\\
 & \overset{\tiny\mbox{Cor. }\ref{cor:DatJanzeHochR}}{\leq} & C\cdot\Vert f\vert M_{u,p}\left(\omega\right)\Vert.
\end{eqnarray*}

Now let $p=1$ and recall the Fefferman-Stein inequality from Proposition
\ref{FeffermanSteinInequality} 
\[
\int_{\left\{ x\,:\,Mf\left(x\right)>t\right\} }\varphi\left(x\right)dx\leq\frac{C}{t}\int_{\mathbb{R}^{n}}\left|f\left(x\right)\right|M\varphi\left(x\right)dx,
\]
for any $f$ and $\varphi\geq0$. Now choose $\varphi\left(x\right)=\omega\left(x\right)\chi_{3Q}\left(x\right)$
for a fixed cube $Q$, and note that
\begin{eqnarray*}
 &  & \int_{\left\{ x\,:\,Mf\left(x\right)>t\right\} }\chi_{Q}\left(x\right)\omega\left(x\right)dx\\
 & \leq & \frac{C}{t}\int_{\mathbb{R}^{n}}\left|f\left(x\right)\right|\mathcal{M}\left(\omega\chi_{Q}\right)\left(x\right)dx\\
 & = & \frac{C}{t}\left(\int_{3Q}\left|f\left(x\right)\right|\mathcal{M}\left(\omega\chi_{Q}\right)\left(x\right)dx+\int_{3Q^{c}}\left|f\left(x\right)\right|\mathcal{M}\left(\omega\chi_{Q}\right)\left(x\right)dx\right)\\
 & = & \frac{C}{t}\left(T_{1}+T_{2}\right)\,\,\,\,\mbox{for all }t.
\end{eqnarray*}

As we did in Theorem \ref{thm:MaximalDoubling}, we will first have
a look at $T_{1}$ and in a next step at $T_{2}$. Now we estimate
the former, by using the fact, that $\omega\in A_{1}$, and hence
\[
\mathcal{M}\left(\omega\chi_{Q}\right)\left(x\right)\leq\mathcal{M}\left(\omega\right)\left(x\right)\leq C\omega\left(x\right).
\]
So after just a few steps we get 
\[
T_{1}\leq C\omega\left(3Q\right)^{\kappa}\Vert f\vert M_{u,1}\left(\omega\right)\Vert\leq C\omega\left(Q\right)^{\kappa}\Vert f\vert M_{u,1}\left(\omega\right)\Vert.
\]
For the second term $T_{2}$ we consider the form 
\[
\frac{1}{\vert R\vert}\int_{R\cap Q}\omega\left(y\right)dy
\]
for $x\in3Q^{c}$ we have $x\in R$ and $R\cap Q\neq\emptyset$. Also
recall, that $Q=Q_{r}\left(x_{0}\right)$. Geometrically speaking,
we have 
\[
\frac{1}{\vert R\vert}\int_{R\cap Q}\omega\left(y\right)dy\leq C_{n}\left(\frac{1}{\vert x-x_{0}\vert^{n}}\int_{Q}\omega\left(y\right)dy\right)\leq C_{n}\left|x-x_{0}\right|^{-n}\omega\left(Q\right).
\]
Hence $\mathcal{M}\left(\omega\chi_{Q}\right)\left(x\right)\leq C_{n}\left|x-x_{0}\right|^{-n}\omega\left(Q\right)$.
Since $\omega\in A_{1}$ we know, that it is a doubling measure with
constant $D>1$. Another estimation yields 
\begin{eqnarray*}
T_{2} & \leq & C\omega\left(Q\right)\int_{\left(3Q\right)^{c}}\frac{\vert f\left(x\right)\vert}{\vert x-x_{0}\vert^{n}}dx\\
 & \leq & C\omega\left(Q\right)\sum_{j=1}^{\infty}\frac{1}{\vert3^{j}Q\vert}\int_{3^{j+1}Q}\vert f\left(x\right)\vert dx\\
 & \leq & C\omega\left(Q\right)\sum_{j=1}^{\infty}\frac{1}{\vert3^{j}Q\vert}\frac{\vert3^{j+1}Q\vert}{\omega\left(3^{j+1}Q\right)}\int_{3^{j+1}Q}\vert f\left(x\right)\vert\omega\left(x\right)dx\\
 & = & C\omega\left(Q\right)^{\kappa}\sum_{i=1}^{\infty}\frac{\omega\left(Q\right)^{1-\kappa}}{\omega\left(3^{j+1}Q\right)^{1-\kappa}}\frac{1}{\omega\left(3^{j+1}Q\right)^{\kappa}}\int_{3^{j+1}Q}\vert f\left(x\right)\vert\omega\left(x\right)dx\\
 & \leq & C\omega\left(Q\right)^{\kappa}\Vert f\vert M_{u,1}\left(\mathbb{R}^{n},\omega\right)\Vert\sum_{j=1}^{\infty}\frac{\omega\left(Q\right)^{1-\kappa}}{\omega\left(3^{j+1}Q\right)^{1-\kappa}}\\
 & \leq & C\omega\left(Q\right)^{\kappa}\Vert f\vert M_{u,1}\left(\mathbb{R}^{n},\omega\right)\Vert
\end{eqnarray*}
Throughout this estimation we used a variety of geometric properties
and the last estimation is gained through the convergence of the series
(recall Lemma \ref{lem:LemmaDoubling}), i.e. $\frac{\omega\left(Q\right)}{\omega\left(3^{j+1}Q\right)}\leq\left(\frac{1}{D}\right)^{j+1}<1$,
since $D>1$. This completes the proof.
\end{proof}
\begin{rem}
The preceding two theorems were taken from \cite{Y. Komori; S. Shirai},
where more operators were introduced and treated the way we did with
the Hardy-Littlewood maximal operator and the weighted maximal operator.
The paper additionally deals with the Calderón-Zygmund operator 
\[
Tf\left(x\right)=\mbox{p.v.}\int_{\mathbb{R}^{n}}K\left(x-y\right)f\left(y\right)dy,
\]
where p.v. is the principle value of the integral. Furthermore for
given $0<\alpha<n$ with the fractional integral operator $I_{\alpha}$
defined by
\[
I_{\alpha}f\left(x\right)=\int_{\mathbb{R}^{n}}\frac{f\left(y\right)}{\vert x-y\vert^{n-\alpha}}dy,
\]
and with the commutator operator given by 
\[
\left[b,T\right]f\left(x\right)=b\left(x\right)Tf\left(x\right)-T\left(bf\right)\left(x\right).
\]

Here we only included the results for the maximal operators, since
they are obviously most related to Muckenhoupt weights, which we will
use for embeddings of weighted Morrey spaces. For further results
refer to \cite[Sect. 3]{Y. Komori; S. Shirai}. \newpage{}
\end{rem}

\section{\label{sec:Embeddings-of-weighted}Embeddings of Weighted Morrey
Spaces}

In this section we study whether or not the embedding

\[
M_{u_{1},q}\left(\omega\right)\hookrightarrow M_{u_{2},p}\left(\nu\right)
\]
holds or more precisely for which choice of parameters $u_{1},u_{2},q,p$
and weighs $\omega,\nu$. We will divert Muckenhoupt weights from
their originally intended use and equip Morrey spaces with them to
show that, in this case, certain embeddings can be proved. From now
on we will assume $0<q\leq u_{1}<\infty$ and $0<p\leq u_{2}$. If
$\omega$ belongs to some Muckenhoupt class, i.e. $\omega\in A_{\infty}$,
we will use the notation
\[
r_{\omega}=\inf\left\{ r\geq1\,|\,\omega\in\mathcal{A}_{r}\right\} 
\]
in this section. 
\begin{rem}
This convention is useful if we recall Corollary \ref{cor:KorollarKleinereAp}.
We showed that if $\omega\in A_{p}$ for some $p>1$, then there is
always $1<p_{1}<p$ such that $\omega\in A_{p_{1}}$, which justifies
the occurrence of the infimum term. For instance, it follows from
$\omega\in A_{1}$ that $r_{\omega}=1$. The converse, however, is
generally not true, i.e. $r_{\omega}=1$ does not imply $\omega\in A_{1}$.
Also recall the weight $\omega_{\alpha,\beta}$ from Example \ref{example1}.
We showed $\omega_{\alpha,\beta}\in A_{r}$ if and only if $-n<\alpha,\beta<n\left(r-1\right)$
or after rearranging $r>\frac{\max\left(\alpha,\beta,0\right)}{n}+1$,
and hence we have the limit case $r_{\omega_{\alpha,\beta}}=\frac{\max\left(\alpha,\beta,0\right)}{n}+1$. 
\end{rem}

What follows are some results of embeddings, if we alter $u$ and
$p$ separately.

\begin{cor}
Let $0<p_{1}\leq u_{1}<\infty$ and $0<p_{2}\leq u_{2}<\infty$. For
a given weight $\omega$ let $f\in M_{u_{1},p_{1}}\left(\omega\right)$
and $g\in M_{u_{2},p_{2}}\left(\omega\right)$. We then have $f\cdot g\in M_{u,p}\left(\omega\right)$,
if we define $\frac{1}{p}:=\frac{1}{p_{1}}+\frac{1}{p_{2}}$ and $\frac{1}{u}:=\frac{1}{u_{1}}+\frac{1}{u_{2}}$.

\end{cor}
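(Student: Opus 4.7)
The plan is to reduce the statement to the weighted Hölder inequality from Remark \ref{WeightedHoelder} applied locally on each ball and then to reassemble the resulting factors into Morrey norms by means of a clean splitting of the exponent on $\omega(B)$. First I would fix an arbitrary ball $B=B_R(x)\subset\mathbb{R}^n$ and rewrite $|fg|^p=|f|^p|g|^p$. Since $\tfrac1p=\tfrac1{p_1}+\tfrac1{p_2}$ and the corollary forces $p<p_1$ and $p<p_2$ (otherwise one of the $p_i$ would have to equal $\infty$, contradicting $p_i\le u_i<\infty$), the exponents $r:=p_1/p>1$ and $r':=p_2/p>1$ are dual in the sense that $r^{-1}+(r')^{-1}=1$. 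Applying the weighted Hölder inequality of Remark \ref{WeightedHoelder} with these exponents therefore yields
\[
\Bigl(\int_B|f(y)g(y)|^p\omega(y)\,dy\Bigr)^{1/p}
\le
\Bigl(\int_B|f(y)|^{p_1}\omega(y)\,dy\Bigr)^{1/p_1}
\Bigl(\int_B|g(y)|^{p_2}\omega(y)\,dy\Bigr)^{1/p_2}.
\]

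The second step is the arithmetic identity that makes the proof go through: from $\tfrac1u=\tfrac1{u_1}+\tfrac1{u_2}$ and $\tfrac1p=\tfrac1{p_1}+\tfrac1{p_2}$ we obtain
\[
\tfrac1u-\tfrac1p=\bigl(\tfrac1{u_1}-\tfrac1{p_1}\bigr)+\bigl(\tfrac1{u_2}-\tfrac1{p_2}\bigr),
\]
so the prefactor $\omega(B)^{1/u-1/p}$ distributes as $\omega(B)^{1/u_1-1/p_1}\cdot\omega(B)^{1/u_2-1/p_2}$. Multiplying the Hölder bound above by $\omega(B)^{1/u-1/p}$ and regrouping produces
\[
\omega(B)^{\tfrac1u-\tfrac1p}\Vert fg\vert L_p(B,\omega)\Vert
\le
\omega(B)^{\tfrac1{u_1}-\tfrac1{p_1}}\Vert f\vert L_{p_1}(B,\omega)\Vert
\cdot
\omega(B)^{\tfrac1{u_2}-\tfrac1{p_2}}\Vert g\vert L_{p_2}(B,\omega)\Vert.
\]
Taking the supremum over all balls $B$ on the left and bounding the two factors on the right independently by their Morrey suprema gives $\Vert fg\vert M_{u,p}(\omega)\Vert\le\Vert f\vert M_{u_1,p_1}(\omega)\Vert\cdot\Vert g\vert M_{u_2,p_2}(\omega)\Vert<\infty$.

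To finish, I still need $fg\in L_p^{loc}(\omega)$ and $p\le u$ so that $M_{u,p}(\omega)$ is non-trivial; the latter is immediate from $\tfrac1u=\tfrac1{u_1}+\tfrac1{u_2}\le\tfrac1{p_1}+\tfrac1{p_2}=\tfrac1p$, while local $p$-integrability with respect to $\omega$ can be read off exactly as in the final step of Lemma \ref{lem:PreparationLemmaForuvinMup}: for any compact $K$ choose a ball $B_{R^\ast}(x^\ast)\supset K$, drop the prefactor $\omega(B_{R^\ast}(x^\ast))^{1/u-1/p}$, and use the Morrey bound just derived. The only point that requires mild care, and hence is the main (rather modest) obstacle, is ensuring that the Hölder exponents are admissible, i.e.\ that the edge case $p\in\{p_1,p_2\}$ does not occur; as observed this is automatic under the finiteness hypotheses on $u_1$ and $u_2$, so no separate treatment of a boundary situation is needed.
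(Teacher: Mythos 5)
Your proof is correct and is essentially the paper's argument: the paper simply invokes Lemma \ref{lem:PreparationLemmaForuvinMup} with $f_{0}\equiv1$, $f_{1}=f$, $f_{2}=g$, and what you have written is precisely the $m=2$ case of that lemma unpacked — weighted Hölder on each ball with the conjugate pair $p_{1}/p$, $p_{2}/p$, followed by the exact splitting of the exponent on $\omega(B)$ (which here is even cleaner than in the lemma, since equality $\tfrac1p=\tfrac1{p_1}+\tfrac1{p_2}$ makes the leftover factor $\omega(B)^{1/p-\sum 1/p_j}$ disappear). Your observations that $p<p_{1},p_{2}$ follows from $p_i\le u_i<\infty$ and that $p\le u$ and local integrability come for free are also exactly how the paper handles these points.
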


\begin{proof}
The proof is easily deduced from Lemma \ref{lem:PreparationLemmaForuvinMup}
if we choose $f_{0}\equiv1$, $f_{1}=f$ and $f_{2}=g$.
\end{proof}
\begin{cor}
\label{cor:MuckenhouptH=0000F6lder}Let $0<p\leq q\leq u<\infty$
and $\omega$ be a weight, then we have 
\[
M_{u,q}\left(\omega\right)\hookrightarrow M_{u,p}\left(\omega\right).
\]
\end{cor}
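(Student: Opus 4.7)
The plan is to apply Hölder's inequality on each ball, treating the exponent pair $(q/p, (q/p)')$ with respect to the measure $dx$, applied to the decomposition $|f|^{p}\omega = |f|^{p}\omega^{p/q}\cdot\omega^{1-p/q}$. Fix a ball $B=B_{R}(x)$. Since $p\leq q$ we have $q/p\geq 1$ with dual exponent $(q/p)'=q/(q-p)$, and one computes $(1-p/q)\cdot q/(q-p)=1$, so Hölder gives
\[
\int_{B}|f(y)|^{p}\omega(y)\,dy\;\leq\;\left(\int_{B}|f(y)|^{q}\omega(y)\,dy\right)^{p/q}\omega(B)^{(q-p)/q}.
\]

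Taking the $1/p$-th power and using the identity $(q-p)/(pq)=1/p-1/q$, this becomes
\[
\left(\int_{B}|f(y)|^{p}\omega(y)\,dy\right)^{1/p}\;\leq\;\omega(B)^{1/p-1/q}\left(\int_{B}|f(y)|^{q}\omega(y)\,dy\right)^{1/q}.
\]
Multiplying both sides by $\omega(B)^{1/u-1/p}$, the exponents of $\omega(B)$ combine to $1/u-1/q$, so
\[
\omega(B)^{1/u-1/p}\left(\int_{B}|f|^{p}\omega\,dy\right)^{1/p}\;\leq\;\omega(B)^{1/u-1/q}\left(\int_{B}|f|^{q}\omega\,dy\right)^{1/q}\;\leq\;\Vert f\vert M_{u,q}(\omega)\Vert.
\]
Taking the supremum over all balls $B$ on the left yields $\Vert f\vert M_{u,p}(\omega)\Vert\leq\Vert f\vert M_{u,q}(\omega)\Vert$, which gives the continuous embedding with constant $1$.

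What remains is to check that $f\in L_{p}^{loc}(\omega)$ whenever $f\in L_{q}^{loc}(\omega)$, so that the containment $M_{u,q}(\omega)\subset M_{u,p}(\omega)$ is meaningful at the level of the underlying local spaces. This follows by the same Hölder argument applied to a compact set $K\subset B_{R^{*}}(x^{*})$, as was done in Lemma \ref{lem:PreparationLemmaForuvinMup} and Corollary \ref{cor:DatJanzeHochR}: the local integrability of $|f|^{q}\omega$ together with $\omega(B_{R^{*}}(x^{*}))<\infty$ yields the local integrability of $|f|^{p}\omega$.

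I expect no substantial obstacle in this argument; the only point requiring a bit of care is the bookkeeping of the exponents to check that the powers of $\omega(B)$ cancel correctly so that the final bound is expressed in terms of $\Vert f\vert M_{u,q}(\omega)\Vert$, and the degenerate case $p=q$ (where the statement is trivial) is accommodated by interpreting $q/p=1$ and $(q/p)'=\infty$ in the standard way.
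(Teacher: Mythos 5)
Your argument is correct: the exponent bookkeeping checks out ($(1-p/q)\cdot\frac{q}{q-p}=1$, the powers of $\omega(B)$ combine to $1/u-1/q$), the Hölder step only needs $q/p\geq1$ and $\omega(B)<\infty$ (guaranteed since a weight is by definition in $L_{1}^{loc}$), so the argument covers the whole range $0<p\leq q\leq u<\infty$ including $p<1$, and you correctly remember to verify $f\in L_{p}^{loc}(\omega)$. However, your route differs from the paper's proof of this corollary: the paper does not redo Hölder here but simply invokes the product Lemma \ref{lem:PreparationLemmaForuvinMup} with $m=1$ and $f_{0}\equiv1$, where the same weighted Hölder computation is hidden inside the induction; that approach buys economy by reusing already-established machinery, at the cost of obscuring that the embedding constant is exactly $1$, which your direct computation makes explicit. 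Amusingly, the paper itself then presents essentially your computation one statement later, as the ``alternative proof technique'' in Corollary \ref{cor:MuckenhouptH=0000F6lder2} (there written with the slightly more baroque factorization $\omega^{p/q}\omega^{-p/q}\omega$ and an extra $\omega(B)^{1/u_{1}-1/u_{1}}$ inserted to accommodate possibly different $u_{1},u_{2}$), so your proof is in effect the paper's second proof specialized to $u_{1}=u_{2}$.
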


\begin{proof}
We use Lemma \ref{lem:PreparationLemmaForuvinMup} with the choice
of $n=1$ and $f_{0}=1$. For $f\in M_{u,q}\left(\omega\right)$ we
quickly get that $f\in M_{u,p}\left(\omega\right)$ if we choose $\frac{1}{q}\leq\frac{1}{p}$
or $p\leq q$ respectively. The continuity of the embedding can be
seen in the last steps of the proof of the lemma. 
\end{proof}
Naturally Lemma \ref{Re: RepeatedRemark} (iii) also hints that (just
like their unweighted counterparts) weighted Morrey spaces are a generalization
of weighted Lebesgue spaces. However, there are functions that exclusively
belong to weighted Morrey spaces. We refer to \cite[Rem. 2.3. (3)]{Y. Komori; S. Shirai}
for this example:\newpage{}
\begin{example}
For this example let $n=1$ and $\omega\left(x\right)=\left|x\right|^{\alpha}$
for some negative parameter $-\frac{1}{2}<\alpha<0$. By Example \ref{example1}
we know by the choice of parameters that $\omega\in A_{p}$. If we
have a look at the function $f\left(x\right)=\chi_{\left(0,1\right)}\left|x\right|^{-\frac{1}{2}}$,
we can indeed verify that
\[
f\in M_{2\left(\alpha+1\right),1}\left(\omega\right)\backslash M_{2\left(\alpha+1\right),2\left(\alpha+1\right)}\left(\omega\right)=M_{2\left(\alpha+1\right),1}\left(\omega\right)\backslash L_{2\left(\alpha+1\right)}\left(\omega\right)
\]
By Corollary \ref{cor:MuckenhouptH=0000F6lder} we know that $M_{2\left(\alpha+1\right),q}\left(\omega\right)\hookrightarrow M_{2\left(\alpha+1\right),1}\left(\omega\right)$
for $1<q\leq2\left(\alpha+1\right)$, such that $f$ exclusively belongs
to weighted Morrey spaces.
\end{example}

\begin{cor}
\label{cor:MuckenhouptH=0000F6lder2}Let $0<q\leq u_{1}<\infty$ and
$\omega$ be a weight. If $u_{1}=u_{2}$ and $p\leq q$, then
\[
M_{u_{1},q}\left(\omega\right)\hookrightarrow M_{u_{2},p}\left(\omega\right).
\]

.
\end{cor}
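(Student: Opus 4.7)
The plan is to recognize that this corollary is essentially a restatement of Corollary \ref{cor:MuckenhouptH=0000F6lder}: the hypothesis $u_{1}=u_{2}$ collapses the two scale parameters to a single $u$, and $p\leq q\leq u$ is exactly the previous setup. So the cleanest approach is simply to invoke Corollary \ref{cor:MuckenhouptH=0000F6lder} with $u:=u_{1}=u_{2}$, and there is nothing further to do.

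For the reader's convenience, and because the statement has been restated, I would also sketch the direct argument (which is what underlies Corollary \ref{cor:MuckenhouptH=0000F6lder} via Lemma \ref{lem:PreparationLemmaForuvinMup}). Fix an arbitrary ball $B=B_{R}(x)$ and apply the weighted Hölder inequality (Remark \ref{WeightedHoelder}) to the functions $|f|^{p}$ and $1$ with exponents $q/p$ and its dual $(q/p)'=q/(q-p)$, against the measure $\omega(y)dy$. This yields
\[
\int_{B}|f(y)|^{p}\omega(y)dy\leq\left(\int_{B}|f(y)|^{q}\omega(y)dy\right)^{p/q}\omega(B)^{1-p/q}.
\]
Taking $p$-th roots and multiplying both sides by $\omega(B)^{1/u_{2}-1/p}=\omega(B)^{1/u_{1}-1/p}$ gives, after a brief exponent count,
\[
\omega(B)^{1/u_{2}-1/p}\Vert f\vert L_{p}(B,\omega)\Vert\leq\omega(B)^{1/u_{1}-1/q}\Vert f\vert L_{q}(B,\omega)\Vert\leq\Vert f\vert M_{u_{1},q}(\omega)\Vert.
\]
Taking the supremum over all balls $B$ on the left-hand side produces $\Vert f\vert M_{u_{2},p}(\omega)\Vert\leq\Vert f\vert M_{u_{1},q}(\omega)\Vert$, which is the desired continuous embedding (with embedding constant $1$). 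The local integrability condition $f\in L_{p}^{\mathrm{loc}}(\omega)$ follows for free from this bound applied to any compact set contained in a sufficiently large ball, exactly as in the second step of the proof of Lemma \ref{lem:PreparationLemmaForuvinMup}.

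There is no real obstacle here: the whole content is the single application of the weighted Hölder inequality together with the fact that the exponents on $\omega(B)$ combine cleanly precisely because $u_{1}=u_{2}$. The condition $u_{1}=u_{2}$ is essential; if $u_{1}\neq u_{2}$ the residual power of $\omega(B)$ would not vanish uniformly in $B$ and the supremum on the left would be uncontrollable, which is consistent with the sharpness statements in Part \ref{part:Part1} (Theorem \ref{thm:Sch=0000F6nesTheorem} and Corollary \ref{cor:Sch=0000F6nesKorollar}).
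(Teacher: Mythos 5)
Your proposal is correct and follows essentially the same route as the paper: the paper likewise notes that the statement is already contained in Corollary \ref{cor:MuckenhouptH=0000F6lder} and then gives exactly your ``alternative'' direct argument, applying Hölder's inequality with exponents $q/p$ and $(q/p)'$ against the measure $\omega(y)\,dy$ so that the exponents on $\omega(B)$ cancel precisely when $u_{1}=u_{2}$. Nothing is missing.
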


\begin{proof}
We have already seen this statement before in Corollary \ref{cor:MuckenhouptH=0000F6lder},
although we did not try to alter $u$ there. Let us have a look at
an alternative proof technique. For $f\in M_{u_{1},q}\left(\omega\right)$
and $B$ a ball we see the following 
\begin{eqnarray*}
 &  & \omega\left(B\right)^{1/u_{2}-1/p}\left(\int_{B}\left|f\left(x\right)\right|^{p}\omega\left(x\right)dx\right)^{1/p}\\
 & = & \omega\left(B\right)^{1/u_{2}-1/p}\left(\int_{B}\left|f\left(x\right)\right|^{p}\omega\left(x\right)^{p/q}\omega\left(x\right)^{-p/q}\omega\left(x\right)dx\right)^{1/p}\\
 & \leq & \omega\left(B\right)^{1/u_{2}-1/p}\left(\int_{B}\left|f\left(x\right)\right|^{q}\omega\left(x\right)dx\right)^{1/q}\left(\int_{B}\omega\left(x\right)^{-\frac{p}{q-p}}\omega\left(x\right)^{\frac{q}{q-p}}dx\right)^{1/p-1/q}\\
 & = & \omega\left(B\right)^{1/u_{2}-1/p+1/p-1/q+1/u_{1}-1/u_{1}}\left(\int_{B}\left|f\left(x\right)\right|^{q}\omega\left(x\right)dx\right)^{1/q}\\
 & = & \omega\left(B\right)^{1/u_{2}-1/u_{1}}\omega\left(B\right)^{1/u_{1}-1/q}\left(\int_{B}\left|f\left(x\right)\right|^{q}\omega\left(x\right)dx\right)^{1/q}\\
 & \leq & \sup_{B}\omega\left(B\right)^{1/u_{2}-1/u_{1}}\Vert f\vert M_{u,q}\left(\omega\right)\Vert,
\end{eqnarray*}
where we used Hölder's inequality for $p<q$. Now if we assume that
$u_{1}=u_{2}$ the last estimate obviously holds.
\end{proof}
\begin{rem}
Corollary \ref{cor:MuckenhouptH=0000F6lder2} does not yield any new
information when we compare it to Corollary \ref{cor:MuckenhouptH=0000F6lder}.
It uses $u_{1}=u_{2}$ as a sufficient condition for the embedding
to hold and this different proof technique leaves us with no evidence
that it could also be a necessary condition. However, considering
the weight $\omega\equiv1$ (which classifies for the conditions in
Corollary \ref{cor:MuckenhouptH=0000F6lder2}) brings us back to Theorem
\ref{thm:Sch=0000F6nesTheorem} and gives us an idea that $u_{1}=u_{2}$
could indeed be mandatory.\newpage{}
\end{rem}

\begin{prop}
\label{prop:GewichtInUngewicht}Let $0<q\leq u_{1}<\infty$ and $0<p\leq u_{2}<\infty$.
Furthermore let $\omega\in A_{\infty}$ with $r_{\omega}>1$ and the
following two conditions be true
\begin{equation}
\sup_{B}\frac{\left|B\right|^{1/u_{2}}}{\omega\left(B\right)^{1/u_{1}}}\leq C<\infty\label{eq:GewichtInUngewichtErsteBedingung}
\end{equation}
\begin{equation}
r_{\omega}<\frac{q}{p}.\label{eq:GewichtInUngewichtZweiteBedingung}
\end{equation}
Then the following embedding holds
\[
M_{u_{1},q}\left(\omega\right)\hookrightarrow M_{u_{2},p}.
\]
Furthermore, if $\omega\in A_{1}$ then $p=q$ is admitted.
\end{prop}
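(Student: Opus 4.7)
My plan is to fix an arbitrary ball $B$ and estimate $|B|^{n(1/u_2 - 1/p)}\left(\int_B |f|^p\,dy\right)^{1/p}$ by a constant multiple of $\omega(B)^{1/u_1 - 1/q}\left(\int_B |f|^q\omega\,dy\right)^{1/q}$. Taking the supremum over $B$ at the end will then give the embedding. Since $R^n \sim |B_R(x)|$, I will write $|B|^{1/u_2 - 1/p}$ instead of $R^{n(1/u_2-1/p)}$ throughout, absorbing the dimensional constant into $C$.

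The core step uses hypothesis \eqref{eq:GewichtInUngewichtZweiteBedingung}: because $r_\omega < q/p$, the weight $\omega$ lies in $A_{q/p}$. Writing $s := q/p > 1$, I would apply Hölder's inequality to the splitting $|f|^p = \bigl(|f|^p \omega^{p/q}\bigr)\cdot\omega^{-p/q}$ with exponents $s$ and $s' = q/(q-p)$, obtaining
\[
\int_B |f|^p\,dy \;\leq\; \left(\int_B |f|^q\omega\,dy\right)^{p/q}\left(\int_B \omega^{-p/(q-p)}\,dy\right)^{(q-p)/q}.
\]
The exponent $-p/(q-p)$ is precisely $-s'/s$, so the $A_{q/p}$-condition of Definition \ref{Def: Muckenhoupt} applied in the form
\[
\left(\frac{1}{|B|}\int_B \omega^{-p/(q-p)}\,dy\right)^{(q-p)/p} \;\leq\; A\,\frac{|B|}{\omega(B)}
\]
bounds the second factor. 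After taking the $p$-th root and tracking exponents, a short computation yields
\[
|B|^{1/u_2 - 1/p}\left(\int_B |f|^p\,dy\right)^{1/p} \;\leq\; C\,\frac{|B|^{1/u_2}}{\omega(B)^{1/u_1}}\cdot\omega(B)^{1/u_1-1/q}\left(\int_B |f|^q\omega\,dy\right)^{1/q}.
\]
At this point the second hypothesis \eqref{eq:GewichtInUngewichtErsteBedingung} takes over and bounds $|B|^{1/u_2}/\omega(B)^{1/u_1}$ uniformly in $B$, delivering the embedding.

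For the supplementary assertion that $p = q$ is admissible when $\omega \in A_1$, the Hölder step collapses since $s' = q/(q-p)$ becomes infinite. Instead I would invoke the pointwise $A_1$-estimate recalled in \eqref{eq:A_1 Absch=0000E4tzung}, namely $\omega(x)^{-1} \leq C\,|B|/\omega(B)$ for almost every $x \in B$. Inserting this into $\int_B |f|^p\,dy = \int_B |f|^p\omega\cdot\omega^{-1}\,dy$ gives the same key inequality $\int_B |f|^p\,dy \leq C\,\tfrac{|B|}{\omega(B)}\int_B |f|^p\omega\,dy$ that the Hölder/Muckenhoupt route produced in the generic case, after which the argument proceeds identically.

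The main obstacle is ensuring that the two hypotheses interlock correctly: \eqref{eq:GewichtInUngewichtZweiteBedingung} is used purely to transfer between $\omega(x)\,dx$ and $dx$ locally on each ball, while \eqref{eq:GewichtInUngewichtErsteBedingung} is used only to reconcile the global scaling factors $|B|^{1/u_2}$ versus $\omega(B)^{1/u_1}$. Neither hypothesis alone controls both aspects, so the accounting of exponents --- verifying that the factor produced by the $A_{q/p}$ inequality combines with $|B|^{1/u_2-1/p}$ to yield exactly the right power of $\omega(B)$ and a residual factor of $|B|^{1/u_2}\omega(B)^{-1/u_1}$ --- is the step I expect to require the most care.
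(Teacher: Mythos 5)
Your proposal is correct and follows essentially the same route as the paper's proof: Hölder's inequality with exponent $q/p$ applied to the splitting $|f|^p\omega^{p/q}\cdot\omega^{-p/q}$, the $A_{q/p}$ condition (available since $r_\omega<q/p$) to bound $\bigl(\int_B\omega^{-p/(q-p)}\,dx\bigr)^{1/p-1/q}$ by $C\,|B|^{1/p}\omega(B)^{-1/q}$, and condition \eqref{eq:GewichtInUngewichtErsteBedingung} to absorb the residual factor $|B|^{1/u_2}\omega(B)^{-1/u_1}$; the $A_1$ case via the pointwise bound $\omega^{-1}\leq A|B|/\omega(B)$ a.e.\ on $B$ likewise matches the paper's use of $\Vert\omega^{-1}\vert L_\infty(B)\Vert$. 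The exponent accounting you flag as the delicate step does work out exactly as you state.
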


\begin{proof}
Let $f\in M_{u_{1},q}\left(\omega\right)$ and $B$ be a ball, then
\begin{eqnarray*}
\left(\int_{B}\left|f\left(x\right)\right|^{p}dx\right)^{1/p} & = & \left(\int_{B}\left|f\left(x\right)\right|^{p}\omega\left(x\right)^{p/q}\omega\left(x\right)^{-p/q}dx\right)^{1/p}\\
 & \leq & \left(\int_{B}\left|f\left(x\right)\right|^{q}\omega\left(x\right)dx\right)^{1/q}\left(\int_{B}\omega\left(x\right)^{-\frac{p}{q-p}}dx\right)^{1/p-1/q}\\
 & = & \omega\left(B\right)^{1/u_{1}-1/q}\left(\int_{B}\left|f\left(x\right)\right|^{q}\omega\left(x\right)dx\right)^{1/q}\\
 &  & \omega\left(B\right)^{1/q-1/u_{1}}\left(\int_{B}\omega\left(x\right)^{-\frac{p}{q-p}}dx\right)^{1/p-1/q}.
\end{eqnarray*}
Here we used Hölder's inequality in the first estimation, which holds,
since $\frac{q}{p}>r_{\omega}>1$ by assumption \eqref{eq:GewichtInUngewichtZweiteBedingung}.
After multiplying both sides of the inequality with $\left|B\right|^{1/u_{2}-1/p}$,
we see that $f\in M_{u_{2},p}$ if 
\begin{equation}
\sup_{B}\left|B\right|^{1/u_{2}-1/p}\omega\left(B\right)^{1/q-1/u_{1}}\left(\int_{B}\omega\left(x\right)^{-\frac{p}{q-p}}dx\right)^{1/p-1/q}<\infty.\label{eq:MuckenhouptInAction1}
\end{equation}
Now by assumption \eqref{eq:GewichtInUngewichtZweiteBedingung} $r_{\omega}<\frac{q}{p}$,
we know that $\omega\in A_{\frac{q}{p}}$ and hence
\begin{equation}
\left(\frac{1}{\left|B\right|}\int_{B}\omega\left(x\right)dx\right)\left(\frac{1}{\left|B\right|}\int_{B}\omega\left(x\right)^{-\frac{p}{q-p}}dx\right)^{\frac{q-p}{p}}\leq C.\label{eq:MuckenhouptInAction100}
\end{equation}
 holds for some $C>0$. So after taking \eqref{eq:MuckenhouptInAction100}
to the $1/q$-th power, we see that \eqref{eq:MuckenhouptInAction1}
yields
\begin{eqnarray*}
 &  & \left|B\right|^{1/u_{2}-1/p}\omega\left(B\right)^{1/q-1/u_{1}}\left(\int_{B}\omega\left(x\right)^{-\frac{p}{q-p}}dx\right)^{1/p-1/q}\\
 & = & \frac{\left|B\right|^{1/u_{2}}}{\omega\left(B\right)^{1/u_{1}}}\underset{\leq C}{\underbrace{\left|B\right|^{-1/p}\omega\left(B\right)^{1/q}\left(\int_{B}\omega\left(x\right)^{-\frac{p}{q-p}}dx\right)^{1/p-1/q}}}.
\end{eqnarray*}
After taking the supremum over all such balls $B$, we see that $\sup_{B}\frac{\left|B\right|^{1/u_{2}}}{\omega\left(B\right)^{1/u_{1}}}\leq C<\infty$
by assumption \eqref{eq:GewichtInUngewichtErsteBedingung}. 

We will now have a look at the case $\omega\in A_{1}$ in which $p=q$
is allowed. We have
\begin{eqnarray}
 &  & \left|B\right|^{1/u_{2}-1/p}\left(\int_{B}\left|f\left(x\right)\right|^{p}dx\right)^{1/p}\nonumber \\
 & = & \left|B\right|^{1/u_{2}-1/q}\left(\int_{B}\left|f\left(x\right)\right|^{q}\omega\left(x\right)\omega\left(x\right)^{-1}dx\right)^{1/q}\nonumber \\
 & \leq & \Vert\omega^{-1}\vert L_{\infty}\left(B\right)\Vert^{1/q}\left|B\right|^{1/u_{2}-1/q}\left(\int_{B}\left|f\left(x\right)\right|^{q}\omega\left(x\right)dx\right)^{1/q}\nonumber \\
 & = & \Vert\omega^{-1}\vert L_{\infty}\left(B\right)\Vert^{1/q}\omega\left(B\right)^{1/q-1/u_{1}}\left|B\right|^{1/u_{2}-1/q}\label{eq:SpecialCase1}\\
 &  & \cdot\omega\left(B\right)^{1/u_{1}-1/q}\left(\int_{B}\left|f\left(x\right)\right|^{q}\omega\left(x\right)dx\right)^{1/q}.\nonumber 
\end{eqnarray}
Since $\omega\in A_{1}$, we conclude $\Vert\omega^{-1}\vert L_{\infty}\left(B\right)\Vert\leq A\left|B\right|\left(\int_{B}\omega\left(x\right)dx\right)^{-1}$
for all balls $B$. Taking this inequality to the $1/q$-th power
and inserting it in \eqref{eq:SpecialCase1} leaves us with
\begin{eqnarray*}
 &  & \frac{\left|B\right|^{1/u_{2}}}{\omega\left(B\right)^{1/u_{1}}}\omega\left(B\right)^{1/u_{1}-1/q}\left(\int_{B}\left|f\left(x\right)\right|^{q}\omega\left(x\right)dx\right)^{1/q}\\
 & \leq & \sup_{B}\frac{\left|B\right|^{1/u_{2}}}{\omega\left(B\right)^{1/u_{1}}}\Vert f\vert M_{u_{1},q}\left(\omega\right)\Vert,
\end{eqnarray*}
where the first term is finite by assumption.
\end{proof}
\begin{example}
Consider $\omega\equiv1$, then $\omega\in A_{s}$ for every $1\leq s\leq\infty$.
So for $\omega\equiv1$, condition \eqref{eq:GewichtInUngewichtErsteBedingung}
becomes $\sup_{B}\left|B\right|^{1/u_{2}-1/u_{1}}\leq C<\infty$ and
hence necessarily $u_{1}=u_{2}$. Condition \eqref{eq:GewichtInUngewichtZweiteBedingung}
then turns into $p\leq q$. Through this choice of parameters coincides
with Theorem \ref{thm:Sch=0000F6nesTheorem} or Corollary \ref{cor:Sch=0000F6nesKorollar}
respectively and is what we expected for unweighted Morrey spaces.
\end{example}

\begin{rem}
Note that the requirement $0<p\leq u_{2}$ is not explicitly needed
if we additionally demand $\sup_{B_{1}}\omega\left(B_{1}\right)\leq A<\infty$
for balls of radius $1$, (i.e. $B_{1}$), but is rather implied by
$\sup_{B}\frac{\left|B\right|^{1/u_{2}}}{\omega\left(B\right)^{1/u_{1}}}\leq C<\infty$
and $0<q\leq u_{1}$. To see this, we have a look at all balls $B$
with $R>1$ and recall
\[
\frac{\left|B_{1}\right|}{\left|B_{R}\right|}\leq C\left(\frac{\omega\left(B_{1}\right)}{\omega\left(B_{R}\right)}\right)^{1/r}\leq CA^{1/r}\cdot\omega\left(B_{R}\right)^{-1/r},
\]
for some $r>1$ (see Remark \ref{Remark: AboutDuo} and also \cite[Ch. V, 1.7]{E.M. Stein})
from which immediately follows that
\[
\frac{1}{\omega\left(B_{R}\right)}\geq\frac{c}{\left|B_{R}\right|^{r}}.
\]
Now the chain of inequalities is as follows:
\[
\infty>C\geq\sup_{B}\frac{\left|B\right|^{1/u_{2}}}{\omega\left(B\right)^{1/u_{1}}}\geq\sup_{B:\left|B\right|>1}\frac{\left|B\right|^{1/u_{2}}}{\omega\left(B\right)^{1/u_{1}}}\geq c\sup_{B:\left|B\right|>1}\left|B\right|^{1/u_{2}-r/u_{1}}.
\]
Naturally we need $\frac{1}{u_{2}}-\frac{r}{u_{1}}\leq0$ and with
condition \eqref{eq:GewichtInUngewichtZweiteBedingung} of Proposition
\ref{prop:GewichtInUngewicht} we get $\frac{u_{1}}{u_{2}}\leq r\leq\frac{q}{p}$.
This turns into $\frac{p}{u_{2}}\leq\frac{q}{u_{1}}\leq1$ by assumption
and thus $p\leq u_{2}$.
\end{rem}

\begin{cor}
\label{cor: ExampleWeight1} Let $-n<\alpha,\beta<n\left(p-1\right)$
and $\omega\left(x\right)=\omega_{\alpha,\beta}\left(x\right)=\begin{cases}
\left|x\right|^{\alpha} & ,\,\left|x\right|\leq1\\
\left|x\right|^{\beta} & ,\,\left|x\right|>1
\end{cases}$. Assume
\begin{equation}
\frac{1}{u_{1}}\left(1+\frac{\beta}{n}\right)\geq\frac{1}{u_{2}}\geq\frac{1}{u_{1}}\left(1+\frac{\max\left(\alpha,0\right)}{n}\right)\,\mbox{ and}\label{eq:ErstesBeispielErsteBedingung}
\end{equation}
\begin{equation}
\frac{1}{p}\geq\frac{1}{q}\left(1+\frac{\max\left(\alpha,\beta\right)}{n}\right).\label{eq:ErstesBeispielZweiteBedingung}
\end{equation}
Then the following embedding holds
\[
M_{u_{1},q}\left(\omega_{\alpha,\beta}\right)\hookrightarrow M_{u_{2},p}.
\]
\end{cor}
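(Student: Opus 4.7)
The strategy is to apply Proposition~\ref{prop:GewichtInUngewicht} to $\omega = \omega_{\alpha,\beta}$, so it suffices to check its two hypotheses. The second one, $r_{\omega} < q/p$, follows directly from the formula $r_{\omega_{\alpha,\beta}} = 1 + \max(\alpha,\beta,0)/n$ recorded in the Remark preceding the corollary, combined with hypothesis \eqref{eq:ErstesBeispielZweiteBedingung}, which rearranges to $q/p \geq 1 + \max(\alpha,\beta)/n$. (A small $\varepsilon$-reduction of $q$ or inflation of $p$, or a direct appeal to Corollary~\ref{cor:KorollarKleinereAp}, takes care of the strict inequality if needed.)

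The heart of the proof is thus to verify
\[
\sup_{B} \frac{|B|^{1/u_{2}}}{\omega_{\alpha,\beta}(B)^{1/u_{1}}} \leq C < \infty,
\]
which I would carry out by a case analysis on a ball $B = B_{R}(x_{0})$, exactly mirroring the three-part decomposition used in Example~\ref{example1}: Part~1 $|x_{0}|<R/2$, Part~2 $|x_{0}|>2R$, Part~3 $R/2\leq|x_{0}|\leq2R$. In each case one uses the inclusions $B_{R/2}(0)\subset B\subset B_{3R/2}(0)$, respectively the fact that $|x|\sim|x_{0}|$ on $B$, to replace $\omega_{\alpha,\beta}(B)$ by the explicit power-type estimates computed there: for $R\leq 1$ and ball at the origin, $\omega(B)\sim R^{\alpha+n}$; for $R\geq 1$ and ball at the origin, $\omega(B)\sim R^{\beta+n}$; for a ball away from the origin inside $\{|x|<1\}$, $\omega(B)\sim|x_{0}|^{\alpha}R^{n}$; and analogously $\sim|x_{0}|^{\beta}R^{n}$ outside the unit ball. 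Boundedness as $R\to 0$ of $R^{n/u_{2}-(\alpha+n)/u_{1}}$ gives $\frac{1}{u_{2}}\geq\frac{1}{u_{1}}(1+\alpha/n)$ (trivial when $\alpha\leq 0$, genuine when $\alpha>0$); boundedness as $R\to\infty$ of $R^{n/u_{2}-(\beta+n)/u_{1}}$ gives $\frac{1}{u_{2}}\leq\frac{1}{u_{1}}(1+\beta/n)$; while the non-centred small-ball regime gives the plain condition $\frac{1}{u_{2}}\geq\frac{1}{u_{1}}$, which is subsumed in $\frac{1}{u_{2}}\geq\frac{1}{u_{1}}(1+\max(\alpha,0)/n)$. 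Hypothesis \eqref{eq:ErstesBeispielErsteBedingung} is precisely the conjunction of these three requirements, so the supremum is finite.

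The intermediate ranges $\frac{2}{3}\leq R\leq 2$ in Part~1 and $\frac{1}{2}|x_{0}|<R<2|x_{0}|$ of Part~3 are handled, as in Example~\ref{example1}, by comparing $B$ to a concrete ball of radius $\tfrac{1}{3}$ or $3$ on which both $|B|$ and $\omega(B)$ are comparable to fixed positive constants; hence the quotient stays uniformly bounded. With both conditions of Proposition~\ref{prop:GewichtInUngewicht} verified, the embedding $M_{u_{1},q}(\omega_{\alpha,\beta})\hookrightarrow M_{u_{2},p}$ follows at once.

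The main obstacle is purely bookkeeping: the weight changes form across the sphere $|x|=1$, so every position-vs.-radius regime of $B$ must be checked separately and the resulting exponent conditions combined. One needs to be careful that in the exterior regime of Part~2 with $|x_{0}|\to\infty$, the factor $|x_{0}|^{-\beta/u_{1}}$ stays bounded — this is automatic because \eqref{eq:ErstesBeispielErsteBedingung} together with $\frac{1}{u_{2}}\geq\frac{1}{u_{1}}$ forces $\beta\geq\max(\alpha,0)\geq 0$, so no additional assumption is needed.
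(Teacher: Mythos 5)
Your plan matches the paper's proof essentially verbatim: it applies Proposition~\ref{prop:GewichtInUngewicht} to $\omega_{\alpha,\beta}$, verifies condition \eqref{eq:GewichtInUngewichtErsteBedingung} by the same position-versus-radius case analysis borrowed from Example~\ref{example1} (yielding exactly the conjunction \eqref{eq:ErstesBeispielErsteBedingung}, including the observation that $\beta\geq0$ is forced), and reads condition \eqref{eq:GewichtInUngewichtZweiteBedingung} off the formula $r_{\omega_{\alpha,\beta}}=1+\max\left(\alpha,\beta,0\right)/n$. The only divergence is that you flag the mismatch between the strict inequality $r_{\omega}<q/p$ demanded by the proposition and the non-strict hypothesis \eqref{eq:ErstesBeispielZweiteBedingung} — a point the paper silently elides — though your proposed $\varepsilon$-adjustment does not actually rescue the boundary case $q/p=r_{\omega}$, since $\omega_{\alpha,\beta}\notin A_{r_{\omega}}$ there.
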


\begin{proof}
We want to apply Proposition \ref{prop:GewichtInUngewicht} and therefore
need to check conditions \eqref{eq:GewichtInUngewichtErsteBedingung}
and \eqref{eq:GewichtInUngewichtZweiteBedingung} for the specific
weight $\omega_{\alpha,\beta}$. We already know that $\omega_{\alpha,\beta}\in A_{r}$
if and only if $-n<\alpha,\beta<n\left(r-1\right)$ (see Example \ref{example1}).

(i) At first we will have a look at the requirement $\sup_{B}\frac{\left|B\right|^{1/u_{2}}}{\omega\left(B\right)^{1/u_{1}}}\leq C<\infty$.
Since all balls $B$ are admissible for taking the supremum, we have
to differ between all the possible cases. The different parts we will
consider, are similar to the ones we checked earlier on in Example
\ref{example1}. For all considered cases, we keep in mind that the
parameters are set as $-n<\alpha,\beta<n\left(p-1\right)$.\\

\emph{Part 1} : Let $x_{0}=0$. Then for $R<1$
\[
\sup_{B}\frac{\left|B\right|^{1/u_{2}}}{\omega\left(B\right)^{1/u_{1}}}\leq C\sup_{R<1}R^{n/u_{2}-\left(\alpha+n\right)/u_{1}},
\]
which we find to be finite for $u_{2}\leq$$\frac{nu_{1}}{\left(\alpha+n\right)}$.
For $R>1$ we are in the situation
\[
\sup_{B}\frac{\left|B\right|^{1/u_{2}}}{\omega\left(B\right)^{1/u_{1}}}\leq C\sup_{R>1}R^{n/u_{2}-\left(\beta+n\right)/u_{1}},
\]
where we need $u_{2}\geq\frac{nu_{1}}{\left(\beta+n\right)}$. \newpage{}

\emph{Part 2 }: Now let $R>0$ and $0<\left|x_{0}\right|<\frac{R}{2}$. 

As before the following inclusion holds: $B_{\frac{R}{2}}\left(0\right)\subset B_{R}\left(x_{0}\right)\subset B_{\frac{3}{2}R}\left(0\right)$.
Also, we have for some $c,C>0$ that $c\left|B_{\frac{3}{2}R}\left(0\right)\right|\leq\left|B_{R}\left(x_{0}\right)\right|\leq C\left|B_{\frac{R}{2}}\left(0\right)\right|$,
from which (with the help of \eqref{eq:BallIntegralUngleichung})
we now deduce
\begin{eqnarray*}
 &  & c\left|B_{\frac{3}{2}R}\left(0\right)\right|^{1/u_{2}}\left(\int_{B_{\frac{3}{2}R}\left(0\right)}\omega_{\alpha,\beta}\left(x\right)dx\right)^{-1/u_{1}}\\
 & \leq & \left|B_{R}\left(x_{0}\right)\right|^{1/u_{2}}\left(\int_{B_{R}\left(x_{0}\right)}\omega_{\alpha,\beta}\left(x\right)dx\right)^{-1/u_{1}}\\
 & \leq & C\left|B_{\frac{1}{2}R}\left(0\right)\right|^{1/u_{2}}\left(\int_{B_{\frac{1}{2}R}\left(0\right)}\omega_{\alpha,\beta}\left(x\right)dx\right)^{-1/u_{1}}.
\end{eqnarray*}
For $R<\frac{2}{3}$ or $R>2$ we get $\frac{1}{2}R<\frac{3}{2}R<1$
and $\frac{3}{2}R>\frac{R}{2}>1$ respectively, meaning that we can
apply part 1 to get the same conditions for $u_{2}$. For the remaining
case $\frac{2}{3}<R<2$, we go back to \eqref{eq:Absch=0000E4tzungObenExample}
and \eqref{eq:Absch=0000E4tzungUntenExample}. Together with the estimation
$c\left(\frac{2}{3}\right)^{n}\leq\left|B_{R}\left(x_{0}\right)\right|\leq C2^{n}$
we get that $\left|B_{R}\left(x_{0}\right)\right|^{1/u_{2}}\left(\int_{B_{R}\left(x_{0}\right)}\omega_{\alpha,\beta}\left(x\right)dx\right)^{1/u_{1}}\sim C$,
which gives us no new information about the choice of values for $u_{2}$.\\

\emph{Part 3} : Let $R>0$ and $\left|x_{0}\right|>2R$.

In this case, we use the triangle inequality for all $x\in B_{R}\left(x_{0}\right)$
to get 
\begin{eqnarray}
\left|x\right| & \leq & \left|x_{0}\right|+\left|x-x_{0}\right|\leq\frac{3}{2}\left|x_{0}\right|\,\mbox{and}\label{eq:Triangle1}\\
\left|x\right| & \geq & \left|x_{0}\right|-\left|x-x_{0}\right|\geq\frac{1}{2}\left|x_{0}\right|,\label{eq:Triangle2}
\end{eqnarray}
which is essentially $\omega_{\alpha,\beta}\left(x_{0}\right)\sim\omega_{\alpha,\beta}\left(x\right)$
for all $x\in B_{R}\left(x_{0}\right)$. We subdivide part 3 into
smaller steps, namely $\left|x_{0}\right|<\frac{2}{3}$, $\frac{2}{3}<\left|x_{0}\right|<2$
and $2<\left|x_{0}\right|$.

Let us first check $\left|x_{0}\right|>2$, for which we get $\left|x\right|>1$
for all $x\in B_{R}\left(x_{0}\right)$ by \eqref{eq:Triangle2} .
We have to find an assignment of parameters, such that
\begin{eqnarray*}
\sup_{\left|x_{0}\right|>2}\sup_{0<R\leq1}\left|B_{R}\left(x_{0}\right)\right|^{1/u_{2}}\omega_{\alpha,\beta}\left(B_{R}\left(x_{0}\right)\right)^{-1/u_{1}} & \leq & A<\infty\mbox{ and}\\
\sup_{R>1}\sup_{\left|x_{0}\right|>2R}\left|B_{R}\left(x_{0}\right)\right|^{1/u_{2}}\omega_{\alpha,\beta}\left(B_{R}\left(x_{0}\right)\right)^{-1/u_{1}} & \leq & A<\infty
\end{eqnarray*}
hold. Now since $\int_{B}\omega_{\alpha,\beta}\left(x\right)dx\sim\int_{B}\omega_{\alpha,\beta}\left(x_{0}\right)dx$=$\left|B\right|\omega_{\alpha,\beta}\left(x_{0}\right)$,
the above inequalities are equivalent to 
\begin{eqnarray}
\sup_{\left|x_{0}\right|>2}\sup_{0<R\leq1}R^{n\left(1/u_{2}-1/u_{1}\right)}\left|x_{0}\right|^{-\beta/u_{1}} & \leq & A<\infty\mbox{ and}\label{eq:Triangle3}\\
\sup_{R>1}\sup_{\left|x_{0}\right|>2R}R^{n\left(1/u_{2}-1/u_{1}\right)}\left|x_{0}\right|^{-\beta/u_{1}} & \leq & A<\infty.\label{eq:Triangle4}
\end{eqnarray}
Inequality \eqref{eq:Triangle3} now requires $u_{1}\geq u_{2}$ and
$\beta\geq0$ to be bounded whereas \eqref{eq:Triangle4} leads to
\[
\sup_{R>1}R^{n\left(1/u_{2}-1/u_{1}\right)-\beta/u_{1}}\leq A<\infty,
\]
from which we again get the condition $\frac{\beta+n}{nu_{1}}\geq\frac{1}{u_{2}}$. 

We move on to the next case, namely $\left|x_{0}\right|<\frac{2}{3}$,
for which we get $\left|x\right|<1$ for all $x\in B_{R}\left(x_{0}\right)$
by \eqref{eq:Triangle1}. Similarly to the above, we need to check
for which choice of parameters 
\begin{equation}
\sup_{0<R<\frac{1}{3}}\sup_{2R<\left|x_{0}\right|<\frac{2}{3}}R^{n\left(1/u_{2}-1/u_{1}\right)}\left|x_{0}\right|^{-\alpha/u_{1}}\leq A<\infty.\label{eq:MaximumCaseForBalls}
\end{equation}
As we can see, this is only finite, if $\frac{1}{u_{2}}\geq\frac{\max\left(\alpha,0\right)+n}{nu_{1}}$.
The maximum occurs, because we are considering $\left|x_{0}\right|\leq\frac{2}{3}$,
which are bounded for negative $\alpha$ anyway.

The last remaining case is $\frac{2}{3}\leq\left|x_{0}\right|\leq2$,
which converts to $\frac{1}{3}\leq\left|x\right|\leq3$ for all $x\in B_{R}\left(x_{0}\right)$.
Here we rather quickly get
\[
\min\left(\left(\frac{1}{3}\right)^{\alpha},1,3^{\beta}\right)\leq\omega_{\alpha,\beta}\left(x\right)\leq\max\left(\left(\frac{1}{3}\right)^{\alpha},1,3^{\beta}\right),
\]
and thus $\left|B_{R}\left(x_{0}\right)\right|^{1/u_{2}}\omega_{\alpha,\beta}\left(B_{R}\left(x_{0}\right)\right)^{-1/u_{1}}\sim C$
in this case.\\

\emph{Part 4: }Let \emph{$\frac{1}{2}R\leq\left|x_{0}\right|\leq2R$.}

We reduce this last remaining part to the previous ones in two steps.
At first recall the triangle inequality once more
\[
\left|x\right|\leq\left|x_{0}\right|+\left|x-x_{0}\right|\le3R,\mbox{ for all }x\in B_{R}\left(x_{0}\right),
\]
i.e. $B_{R}\left(x_{0}\right)\subset B_{3R}\left(0\right)$, which
gives an upper bound and brings us back to Part 1. As for the second
step, recall that $B_{R}\left(x_{0}\right)$ is an open set, thus
enclosing another open ball with smaller radius, say $R'=\frac{1}{6}R$.
With $\left|x_{0}\right|\geq\frac{1}{2}R>\frac{1}{3}R=2R'$ and $B_{R'}\left(x_{0}\right)\subset B_{R}\left(x_{0}\right)$,
we get a lower bound and return to Part 3. Thus Part 4 does not yield
any new relevant information about any of the parameters. As a conclusion
we get \eqref{eq:ErstesBeispielErsteBedingung}.

(ii) As we have already seen in Example \ref{example1}, we know that
$\omega_{\alpha,\beta}\in A_{r}$ if and only if $-n<\alpha,\beta<n\left(r-1\right)$.
Since we are dealing with Muckenhoupt classes, we are interested in
$r\geq1$, so by rearranging the previous inequality, we find $r>\frac{\max\left(\alpha,\beta\right)}{n}+1$
to be a valid requirement (note that \eqref{eq:ErstesBeispielErsteBedingung}
indirectly implies $\beta\geq0$.). Lastly we have $r_{\omega_{\alpha,\beta}}=\frac{\max\left(\alpha,\beta\right)}{n}+1$.
This finally yields \eqref{eq:ErstesBeispielZweiteBedingung} and
concludes the proof.
\end{proof}
\newpage{}
\begin{prop}
\label{prop:UngewichtInGewicht}Let $0<q\leq u_{1}$ and $0<p\leq u_{2}$.
Furthermore let $\nu^{-1}\in A_{\infty}$ and the following two conditions
be true:
\begin{equation}
\sup_{B}\left(\frac{\left|B\right|}{\nu^{-1}\left(B\right)}\right)^{1/p}\left(\frac{\nu\left(B\right)}{\left|B\right|}\right)^{1/u_{2}-1/p}\left|B\right|^{1/u_{2}-1/u_{1}}\leq C<\infty\label{eq:UngewichtInGewichtErsteBedingung}
\end{equation}
\begin{equation}
r_{\nu^{-1}}<2-\frac{p}{q}.\label{eq:UngewichtInGewichtZweiteBedingung}
\end{equation}
Then the following embedding holds
\[
M_{u_{1},q}\hookrightarrow M_{u_{2},p}\left(\nu\right).
\]
Furthermore, if $\nu^{-1}\in A_{1}$ then $p=q$ is admitted.
\end{prop}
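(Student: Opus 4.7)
The plan is to mirror the strategy of Proposition~\ref{prop:GewichtInUngewicht}, but with the roles of the weight and its reciprocal swapped: Hölder's inequality will now be used to \emph{insert} a power of $\nu$ into an unweighted integral, and the Muckenhoupt condition \eqref{eq:UngewichtInGewichtZweiteBedingung} on $\nu^{-1}$ will be used to estimate that power. Fix $f\in M_{u_{1},q}$ and a ball $B\subset\mathbb{R}^{n}$.

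First I would treat the generic case $p<q$. Applying the unweighted Hölder inequality with conjugate exponents $q/p$ and $q/(q-p)$ to the product $|f|^{p}\cdot\nu$ gives
\begin{equation*}
\Bigl(\int_{B}|f(x)|^{p}\nu(x)\,dx\Bigr)^{1/p}\leq\Bigl(\int_{B}|f(x)|^{q}\,dx\Bigr)^{1/q}\Bigl(\int_{B}\nu(x)^{q/(q-p)}\,dx\Bigr)^{1/p-1/q}.
\end{equation*}
The first factor is at most $|B|^{1/q-1/u_{1}}\Vert f\vert M_{u_{1},q}\Vert$ directly from the definition of the unweighted Morrey norm. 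For the second factor, condition \eqref{eq:UngewichtInGewichtZweiteBedingung} together with the monotonicity of the Muckenhoupt classes (Lemma~\ref{thm:BasicApFacts} (iii)) yields $\nu^{-1}\in A_{r}$ for $r=2-p/q$; since $-r'/r=-q/(q-p)$, the defining inequality of $A_{r}$ immediately rephrases as
\begin{equation*}
\nu^{-1}(B)\Bigl(\int_{B}\nu(x)^{q/(q-p)}\,dx\Bigr)^{(q-p)/q}\leq A\,|B|^{(2q-p)/q},
\end{equation*}
which is exactly the bound needed.

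Combining the two estimates, multiplying by $\nu(B)^{1/u_{2}-1/p}$, and collecting the exponents of $|B|$, $\nu(B)$ and $\nu^{-1}(B)$, one finds that the left-hand side of the desired embedding is controlled, up to a constant, by
\begin{equation*}
\Bigl(\frac{|B|}{\nu^{-1}(B)}\Bigr)^{1/p}\Bigl(\frac{\nu(B)}{|B|}\Bigr)^{1/u_{2}-1/p}|B|^{1/u_{2}-1/u_{1}}\cdot\Vert f\vert M_{u_{1},q}\Vert.
\end{equation*}
Taking the supremum over all balls $B$ and invoking \eqref{eq:UngewichtInGewichtErsteBedingung} completes the case $p<q$.

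For the endpoint $p=q$ under the sharper hypothesis $\nu^{-1}\in A_{1}$, Hölder's inequality is unavailable since $q/(q-p)$ becomes undefined. Instead I would use the $A_{1}$-characterisation from Remark~\ref{Remark Definition A_1}, which rearranges into $\nu(y)\leq A\,|B|/\nu^{-1}(B)$ for almost every $y\in B$; inserting this pointwise bound directly into $\int_{B}|f|^{p}\nu\,dx$ removes the weight and reduces the estimate to the unweighted Morrey norm of $f$, collapsing to the same expression as before. The main obstacle I expect is purely arithmetic bookkeeping: after the Muckenhoupt estimate is applied, one must verify that the several factors of $|B|$, $\nu(B)$ and $\nu^{-1}(B)$ reassemble exactly into the quantity appearing in \eqref{eq:UngewichtInGewichtErsteBedingung}. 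This is a routine exponent check (the essential identity being $1/q-1/u_{1}+(2q-p)/(pq)=2/p-1/u_{1}$) and presents no conceptual difficulty beyond being careful with signs.
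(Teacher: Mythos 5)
Your proposal is correct and follows essentially the same route as the paper's proof: Hölder's inequality with exponents $q/p$ and $q/(q-p)$, the observation that $r_{\nu^{-1}}<2-p/q$ places $\nu^{-1}$ in $A_{2-p/q}$ whose defining inequality controls $\int_{B}\nu^{q/(q-p)}dx$, reassembly of the exponents into the quantity in \eqref{eq:UngewichtInGewichtErsteBedingung}, and the pointwise $A_{1}$ bound $\nu(y)\leq A\,|B|/\nu^{-1}(B)$ for the endpoint $p=q$. Your exponent check ($1/q-1/u_{1}+(2q-p)/(pq)=2/p-1/u_{1}$) matches the paper's computation, so no gap remains.
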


\begin{proof}
Let $f\in M_{u_{1},q}$ then by using Hölder's inequality with $\frac{q}{p}>1$
we gain
\begin{eqnarray*}
 &  & \nu\left(B\right)^{1/u_{2}-1/p}\left(\int_{B}\left|f\left(x\right)\right|^{p}\nu\left(x\right)dx\right)^{1/p}\\
 & \leq & \nu\left(B\right)^{1/u_{2}-1/p}\left(\int_{B}\left|f\left(x\right)\right|^{q}dx\right)^{1/q}\left(\int_{B}\nu\left(x\right)^{\frac{q}{q-p}}dx\right)^{\frac{q-p}{qp}}\\
 & = & \left|B\right|^{1/u_{1}-1/q}\left(\int_{B}\left|f\left(x\right)\right|^{q}dx\right)^{1/q}\left|B\right|^{1/q-1/u_{1}}\\
 &  & \cdot\nu\left(B\right)^{1/u_{2}-1/p}\left(\int_{B}\nu\left(x\right)^{\frac{q}{q-p}}dx\right)^{\frac{q-p}{qp}}.
\end{eqnarray*}
We see that $f\in M_{u_{2},p}\left(\nu\right)$ if 
\begin{equation}
\sup_{B}\left|B\right|^{1/q-1/u_{1}}\nu\left(B\right)^{1/u_{2}-1/p}\left(\int_{B}\nu\left(x\right)^{\frac{q}{q-p}}dx\right)^{\frac{q-p}{qp}}\leq C<\infty.\label{eq:ConditionUngewichtInGewicht}
\end{equation}
Now similarly to Proposition \ref{prop:GewichtInUngewicht}, we want
to use the Muckenhoupt properties of $\nu^{-1}$. By assumption \eqref{eq:UngewichtInGewichtZweiteBedingung},
i.e. $r_{\nu^{-1}}<2-\frac{p}{q}$, we know that $\nu^{-1}\in A_{2-\frac{p}{q}}$
and thus
\[
\left(\frac{1}{\left|B\right|}\int_{B}\nu^{-1}\left(x\right)dx\right)\left(\frac{1}{\left|B\right|}\int_{B}\left(\nu^{-1}\right)^{-\frac{q}{q-p}}dx\right)^{\frac{q-p}{q}}\leq C<\infty.
\]
Of course this inequality holds after taking it to the $1/p$-th power.
Now we see that \eqref{eq:ConditionUngewichtInGewicht} turns into
\begin{eqnarray*}
 &  & \left|B\right|^{1/q-1/u_{1}}\nu\left(B\right)^{1/u_{2}-1/p}\left(\int_{B}\nu\left(x\right)^{\frac{q}{q-p}}dx\right)^{\frac{q-p}{qp}}\\
 & \leq & \left|B\right|^{1/q-1/u_{1}}\nu\left(B\right)^{1/u_{2}-1/p}\left|B\right|^{1/p}\nu^{-1}\left(B\right)^{-1/p}\\
 &  & \cdot\left|B\right|^{\frac{q-p}{qp}}\left(\frac{1}{\left|B\right|}\nu^{-1}\left(B\right)\right)^{1/p}\left(\frac{1}{\left|B\right|}\int_{B}\nu^{\frac{q}{q-p}}dx\right)^{\frac{q-p}{qp}}\\
 & \leq & C\left|B\right|^{1/q-1/u_{1}}\cdot\nu\left(B\right)^{1/u_{2}-1/p}\left|B\right|^{1/p}\\
 &  & \cdot\nu^{-1}\left(B\right)^{-1/p}\left|B\right|^{1/p-1/q}\\
 & = & C\left(\frac{\left|B\right|}{\nu^{-1}\left(B\right)}\right)^{1/p}\left(\frac{\nu\left(B\right)}{\left|B\right|}\right)^{1/u_{2}-1/p}\left|B\right|^{1/u_{2}-1/u_{1}}.
\end{eqnarray*}
By assumption the last term is bounded after taking the supremum over
all balls $B$ and thus $f\in M_{u_{2},p}\left(\nu\right)$. 

At last, let us have a look at the case $\nu^{-1}\in A_{1}$ in which
$p=q$ is allowed. Let $f\in M_{u_{1},q}$ to see
\begin{eqnarray}
 &  & \nu\left(B\right)^{1/u_{2}-1/p}\left(\int_{B}\left|f\left(x\right)\right|^{p}\nu\left(x\right)dx\right)^{1/p}\nonumber \\
 & = & \nu\left(B\right)^{1/u_{2}-1/p}\left(\int_{B}\left|f\left(x\right)\right|^{q}\nu\left(x\right)dx\right)^{1/q}\nonumber \\
 & \leq & \nu\left(B\right)^{1/u_{2}-1/p}\Vert\nu\vert L_{\infty}\left(B\right)\Vert\left(\int_{B}\left|f\left(x\right)\right|^{q}\nu\left(x\right)dx\right)^{1/q}\nonumber \\
 & = & \nu\left(B\right)^{1/u_{2}-1/p}\Vert\nu\vert L_{\infty}\left(B\right)\Vert\left|B\right|^{1/p-1/u_{1}}\nonumber \\
 &  & \cdot\left|B\right|^{1/u_{1}-1/q}\left(\int_{B}\left|f\left(x\right)\right|^{q}\nu\left(x\right)dx\right)^{1/q}.\label{eq:EquationBarsch}
\end{eqnarray}
Now since $\nu^{-1}\in A_{1}$, we know that $\frac{1}{\left|B\right|}\int_{B}\nu\left(x\right)^{-1}dx\leq C\nu\left(y\right)^{-1}$
for all $y\in B$. Rearranging and taking the $1/p$-th power yields
$C\left(\nu\left(y\right)^{-1}\right)^{-1/p}\leq\left|B\right|^{1/p}\left(\int_{B}\nu\left(x\right)^{-1}dx\right)^{-1/p}$.
Thus taking the supremum over all balls $B$ in \eqref{eq:EquationBarsch}
leaves us with 
\begin{eqnarray*}
 &  & \sup_{B}\nu\left(B\right)^{1/u_{2}-1/p}\Vert\nu\vert L_{\infty}\left(B\right)\Vert\left|B\right|^{1/p-1/u_{1}}\Vert f\vert M_{u_{1},q}\Vert\\
 & \leq & \sup_{B}\left(\frac{\left|B\right|}{\nu^{-1}\left(B\right)}\right)^{1/p}\left(\frac{\nu\left(B\right)}{\left|B\right|}\right)^{1/u_{2}-1/p}\left|B\right|^{1/u_{2}-1/u_{1}}\Vert f\vert M_{u_{1},q}\Vert,
\end{eqnarray*}
which is finite by assumption \eqref{eq:UngewichtInGewichtErsteBedingung}.
This concludes the proof.
\end{proof}
\newpage{}
\begin{example}
Let us again take the distinctive example $\nu\equiv1$ as an application
for Proposition \ref{prop:UngewichtInGewicht}. By doing so, we see
that the first condition shrinks down to $\sup_{B}\left|B\right|^{1/u_{2}-1/u_{1}}\leq C<\infty$,
which indeed can only be satisfied if and only if $u_{1}=u_{2}$.
Furthermore $\nu\equiv1\in A_{1}$ and thus $p\leq q$ is allowed,
hence we return to Corollary \ref{cor:Sch=0000F6nesKorollar} again.
\end{example}

\begin{cor}
\label{cor:ExampleWeight2}Let $\left|\alpha\right|,\left|\beta\right|<n$
and $\omega\left(x\right)=\omega_{\alpha,\beta}\left(x\right)=\begin{cases}
\left|x\right|^{\alpha} & ,\,\left|x\right|\leq1\\
\left|x\right|^{\beta} & ,\,\left|x\right|>1
\end{cases}$. \\
Assume
\begin{equation}
\frac{1}{u_{2}}\left(1+\frac{\beta}{n}\right)\leq\frac{1}{u_{1}}\leq\frac{1}{u_{2}}\left(1+\frac{\min\left(\alpha,0\right)}{n}\right)\,\mbox{ and}\label{eq:ZweitesBeispielErsteBedingung}
\end{equation}
\begin{equation}
\frac{1}{q}\leq\frac{1}{p}\left(1+\frac{\min\left(\alpha,\beta\right)}{n}\right).\label{eq:ZweitesBeispielZweiteBedingung}
\end{equation}
Then the following embedding holds
\[
M_{u_{1},q}\hookrightarrow M_{u_{2},p}\left(\omega_{\alpha,\beta}\right).
\]
\end{cor}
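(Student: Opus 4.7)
The plan is to invoke Proposition~\ref{prop:UngewichtInGewicht} with the choice $\nu=\omega_{\alpha,\beta}$, so that $\nu^{-1}=\omega_{-\alpha,-\beta}$. The structure of the argument is completely parallel to the proof of Corollary~\ref{cor: ExampleWeight1}, only with the roles of the weight and the Lebesgue measure interchanged; the hypothesis $|\alpha|,|\beta|<n$ is exactly what is needed so that both $\nu$ and $\nu^{-1}$ fall under the scope of Example~\ref{example1}.

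First I would verify the assumption $\nu^{-1}\in A_\infty$ together with the computation of $r_{\nu^{-1}}$. By Example~\ref{example1}, $\omega_{-\alpha,-\beta}\in A_r$ if and only if $-n<-\alpha,-\beta<n(r-1)$; the left inequality is automatic from $\alpha,\beta<n$, while the right one gives the threshold
\[
r_{\nu^{-1}}=1+\max\!\left(\tfrac{-\alpha}{n},\tfrac{-\beta}{n},0\right)=1-\tfrac{\min(\alpha,\beta,0)}{n}.
\]
Plugging this into condition~\eqref{eq:UngewichtInGewichtZweiteBedingung}, $r_{\nu^{-1}}<2-p/q$, rearranges exactly into $\frac{1}{q}<\frac{1}{p}\bigl(1+\frac{\min(\alpha,\beta,0)}{n}\bigr)$, which is implied by the stated assumption~\eqref{eq:ZweitesBeispielZweiteBedingung}; the boundary case (and in particular $p=q$) is covered by the addendum of Proposition~\ref{prop:UngewichtInGewicht} whenever $\nu^{-1}\in A_1$, i.e.\ when $\min(\alpha,\beta)\geq 0$.

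The heart of the proof is the verification of~\eqref{eq:UngewichtInGewichtErsteBedingung}, namely the uniform bound
\[
\sup_B \left(\frac{|B|}{\nu^{-1}(B)}\right)^{1/p}\left(\frac{\nu(B)}{|B|}\right)^{1/u_2-1/p}|B|^{1/u_2-1/u_1}\leq C<\infty.
\]
I would carry out the same four-part geometric case analysis used in Corollary~\ref{cor: ExampleWeight1}, distinguishing the ball $B=B_R(x_0)$ by the position of its center relative to its radius: (1) $x_0=0$ with $R<1$ and $R>1$ separately, (2) $|x_0|<R/2$ (reducing via $B_{R/2}(0)\subset B\subset B_{3R/2}(0)$ to Part~1), (3) $|x_0|>2R$ where the triangle inequality yields $\omega_{\pm\alpha,\pm\beta}(x)\sim\omega_{\pm\alpha,\pm\beta}(x_0)$ uniformly on $B$, and (4) $R/2<|x_0|<2R$ (reduced to Parts~1 and~3 by enclosing a smaller ball $B_{R/6}(x_0)$ and being enclosed by $B_{3R}(0)$). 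In each case the integrals $\nu(B)\sim\int_0^R r^{(\alpha\text{ or }\beta)+n-1}dr$ and $\nu^{-1}(B)\sim\int_0^R r^{-(\alpha\text{ or }\beta)+n-1}dr$ (or their centered analogues) are explicit, and matching the exponent of $R$ on the resulting power $R^{\,\gamma}$ with the requirement $\gamma\leq 0$ for $R\to\infty$ and $\gamma\geq 0$ for $R\to 0$ yields exactly the two-sided condition~\eqref{eq:ZweitesBeispielErsteBedingung}.

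The main obstacle is precisely this case analysis: keeping track of when the ``inner'' exponent $\alpha$ or the ``outer'' exponent $\beta$ dominates, and in which direction the resulting power of $R$ must be controlled. The appearance of $\min(\alpha,0)$ rather than just $\alpha$ in~\eqref{eq:ZweitesBeispielErsteBedingung} comes from the step analogous to~\eqref{eq:MaximumCaseForBalls}, where for small balls near the unit sphere the weight $\nu^{-1}=\omega_{-\alpha,-\beta}$ forces the relevant exponent to be $\max(-\alpha,0)=-\min(\alpha,0)$; this is the dual mechanism to the $\max(\alpha,0)$ that appeared in Corollary~\ref{cor: ExampleWeight1}. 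Once all four parts have been checked, the conclusion $M_{u_1,q}\hookrightarrow M_{u_2,p}(\omega_{\alpha,\beta})$ follows directly from Proposition~\ref{prop:UngewichtInGewicht}.
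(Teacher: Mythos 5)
Your proposal follows essentially the same route as the paper: apply Proposition~\ref{prop:UngewichtInGewicht} with $\nu=\omega_{\alpha,\beta}$, compute $r_{\nu^{-1}}$ from $\nu^{-1}=\omega_{-\alpha,-\beta}$ via Example~\ref{example1}, and verify condition \eqref{eq:UngewichtInGewichtErsteBedingung} by the same four-part case analysis on the position and radius of the ball, reducing to the computations of Corollary~\ref{cor: ExampleWeight1}. Your remark that the non-strict assumption \eqref{eq:ZweitesBeispielZweiteBedingung} only delivers the strict requirement $r_{\nu^{-1}}<2-p/q$ away from the boundary is in fact slightly more careful than the paper, which derives the strict inequality without comment.
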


\begin{proof}
To prove the statement we apply Proposition \ref{prop:UngewichtInGewicht}
for the example weight $\omega_{\alpha,\beta}$, and need to find
parameters $u_{1}$ and $q$ such that conditions \eqref{eq:UngewichtInGewichtErsteBedingung}
and \eqref{eq:UngewichtInGewichtZweiteBedingung} hold. 

(i) To verify the first condition, we have to cover all the different
types of balls, as we have already seen in Example \ref{example1}
or Corollary \ref{cor: ExampleWeight1}. This time, we simply reduce
the proof to the just mentioned corollary, since the cases are the
same, except for some more terms in the supremum, which we will take
care of as follows:

The balls of main interest are the ones centered at the origin and
radius $R<1$ and $R>1$ respectively, since we are trying to reduce
every other case to these two. So let us have a look at balls $B_{R}\left(0\right)$
with $R<1$ as an example. We recall the equivalence $\omega_{\alpha,\beta}\left(B\right)\sim R^{n+\alpha}\sim\left|B\right|^{1+\alpha/n}$.
Also observe that $\omega_{\alpha,\beta}^{-1}\left(x\right)=\omega_{-\alpha,-\beta}\left(x\right)$
and consequently $\omega_{\alpha,\beta}^{-1}\left(B_{R}\left(0\right)\right)\sim\left|B_{R}\left(0\right)\right|^{1-\alpha/n}$.
With that in mind, we see
\begin{eqnarray*}
 &  & \sup_{B_{R}\left(0\right),R<1}\left(\frac{\left|B\right|}{\omega_{\alpha,\beta}^{-1}\left(B\right)}\right)^{1/p}\left(\frac{\omega_{\alpha,\beta}\left(B\right)}{\left|B\right|}\right)^{1/u_{2}-1/p}\left|B\right|^{1/u_{2}-1/u_{1}}\\
 & \sim & \sup_{B_{R}\left(0\right),R<1}\left(\frac{\left|B\right|}{\left|B\right|^{1-\frac{\alpha}{n}}}\right)^{1/p}\left(\frac{\left|B\right|^{1+\frac{\alpha}{n}}}{\left|B\right|}\right)^{1/u_{2}-1/p}\left|B\right|^{1/u_{2}-1/u_{1}}\\
 & = & \sup_{B_{R}\left(0\right),R<1}\left|B\right|^{\frac{1}{u_{2}}\left(\frac{\alpha}{n}+1\right)-\frac{1}{u_{2}}}<\infty
\end{eqnarray*}
which is true if $\frac{1}{u_{1}}\leq\frac{1}{u_{2}}\left(\frac{\alpha}{n}+1\right)$.
Consequently we get the opposite for $R>1$:
\begin{eqnarray*}
 &  & \sup_{B_{R}\left(0\right),R>1}\left(\frac{\left|B\right|}{\omega_{\alpha,\beta}^{-1}\left(B\right)}\right)^{1/p}\left(\frac{\omega_{\alpha,\beta}\left(B\right)}{\left|B\right|}\right)^{1/u_{2}-1/p}\left|B\right|^{1/u_{2}-1/u_{1}}\\
 & \sim & \sup_{B_{R}\left(0\right),R>1}\left|B\right|^{\frac{1}{u_{2}}\left(\frac{\beta}{n}+1\right)-\frac{1}{u_{2}}}<\infty,
\end{eqnarray*}
which is true for $\frac{1}{u_{2}}\left(1+\frac{\beta}{n}\right)\leq\frac{1}{u_{1}}$.
Roughly speaking one could say $\left|B\right|/\omega_{\alpha,\beta}^{-1}\left(B\right)$
is the counterpart to $\left(\omega_{\alpha,\beta}\left(B\right)/\left|B\right|\right)^{-1}$.
The remaining cases of the proof run analogously to the proof of Corollary
\ref{cor: ExampleWeight1} and do not yield any new information about
upper or lower boundaries for $u_{1}$. Note however, that the maximum
term in \eqref{eq:MaximumCaseForBalls} $\max\left(\alpha,0\right)$
turns into $\min\left(\alpha,0\right)$ and we are left with \eqref{eq:ZweitesBeispielErsteBedingung}.

(ii) As we have seen in Corollary \ref{cor: ExampleWeight1}, we know
that $\omega_{\alpha,\beta}\in A_{r}$ if and only if the parameters
satisfy $-n<\alpha,\beta<n\left(r-1\right)$. Again we use $\omega_{\alpha,\beta}^{-1}=\omega_{-\alpha,-\beta}$.
Rearranging the inequality yields $r_{\omega_{\alpha,\beta}^{-1}}=1+\frac{\max\left(-\alpha,-\beta\right)}{n}=1-\frac{\min\left(\alpha,\beta\right)}{n}$.
Now using the second condition of Proposition \ref{prop:UngewichtInGewicht},
we see that we require $r_{\omega_{\alpha,\beta}^{-1}}<2-\frac{p}{q}$.
Inserting the preceding equality and rearranging the terms yields
$\frac{1}{q}<\frac{1}{p}\left(1+\frac{\min\left(\alpha,\beta\right)}{n}\right)$.
We also notice that $\min\left(\alpha,\beta,0\right)=\min\left(\alpha,\beta\right)$
since \eqref{eq:ZweitesBeispielErsteBedingung} implies $\beta\leq0$.
\end{proof}
\begin{cor}
\label{cor:CorollaryConclusion}Let $\min\left(\alpha_{1},\beta_{1}\right)>-n$
and $\left|\alpha_{2}\right|,\left|\beta_{2}\right|<n$. Furthermore
let the parameters satisfy $0<p_{1}\leq u_{1}$ and $0<p_{2}\leq u_{2}$.
Assume
\begin{eqnarray}
 &  & \max\left(\frac{1}{u_{2}}\left(1+\frac{\beta_{2}}{n}\right),\frac{1}{u_{1}}\left(1+\frac{\max\left(\alpha_{1}\right)}{n}\right)\right)\nonumber \\
 & \le & \min\left(\frac{1}{u_{2}}\left(1+\frac{\min\left(\alpha_{2},0\right)}{n}\right),\frac{1}{u_{1}}\left(1+\frac{\beta_{1}}{n}\right)\right)\mbox{\,\ and}\label{eq:ExampleConclusion1}
\end{eqnarray}
\begin{eqnarray}
 &  & \frac{1}{p_{1}}\left(1+\frac{\beta_{1}}{n}\right)=\frac{1}{p_{1}}\left(1+\frac{\max\left(\alpha_{1},\beta_{1}\right)}{n}\right).\label{eq:ExampleConclusion2}\\
 & < & \frac{1}{p_{2}}\left(1+\frac{\min\left(\alpha_{2},\beta_{2}\right)}{n}\right)=\frac{1}{p_{2}}\left(1+\frac{\beta_{2}}{n}\right)\nonumber 
\end{eqnarray}
Then the following embedding holds
\[
M_{u_{1},p_{1}}\left(\omega_{\alpha_{1},\beta_{1}}\right)\hookrightarrow M_{u_{2},p_{2}}\left(\omega_{\alpha_{2},\beta_{2}}\right).
\]
\end{cor}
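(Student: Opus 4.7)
The plan is to establish the embedding by composition, inserting an intermediate unweighted Morrey space $M_{u,\tilde{q}}$ between the two weighted ones and invoking the already proved one-weight embeddings from Corollaries \ref{cor: ExampleWeight1} and \ref{cor:ExampleWeight2}. Concretely, I would look for parameters $(u,\tilde{q})$ with $0<\tilde{q}\leq u<\infty$ such that
$$M_{u_{1},p_{1}}(\omega_{\alpha_{1},\beta_{1}}) \hookrightarrow M_{u,\tilde{q}} \hookrightarrow M_{u_{2},p_{2}}(\omega_{\alpha_{2},\beta_{2}}),$$
where the left embedding comes from Corollary \ref{cor: ExampleWeight1} (applied with source weight $\omega_{\alpha_{1},\beta_{1}}$) and the right one from Corollary \ref{cor:ExampleWeight2} (applied with target weight $\omega_{\alpha_{2},\beta_{2}}$).

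To invoke Corollary \ref{cor: ExampleWeight1} for the left embedding, $(u,\tilde{q})$ must satisfy
$$\tfrac{1}{u_{1}}\bigl(1+\tfrac{\beta_{1}}{n}\bigr)\geq\tfrac{1}{u}\geq\tfrac{1}{u_{1}}\bigl(1+\tfrac{\max(\alpha_{1},0)}{n}\bigr)\quad\text{and}\quad\tfrac{1}{\tilde{q}}\geq\tfrac{1}{p_{1}}\bigl(1+\tfrac{\max(\alpha_{1},\beta_{1})}{n}\bigr),$$
while to invoke Corollary \ref{cor:ExampleWeight2} for the right embedding, $(u,\tilde{q})$ must satisfy
$$\tfrac{1}{u_{2}}\bigl(1+\tfrac{\beta_{2}}{n}\bigr)\leq\tfrac{1}{u}\leq\tfrac{1}{u_{2}}\bigl(1+\tfrac{\min(\alpha_{2},0)}{n}\bigr)\quad\text{and}\quad\tfrac{1}{\tilde{q}}\leq\tfrac{1}{p_{2}}\bigl(1+\tfrac{\min(\alpha_{2},\beta_{2})}{n}\bigr).$$
Joining the two bands on $1/u$ yields exactly the interval imposed in \eqref{eq:ExampleConclusion1}, which by hypothesis is non-empty; joining the two bands on $1/\tilde{q}$ yields exactly the interval imposed by \eqref{eq:ExampleConclusion2}, which is also non-empty. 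Thus admissible intermediate parameters $(u,\tilde{q})$ exist, and composition of the two embeddings delivers the claim.

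The only real obstacle is bookkeeping: checking consistency with the sign restrictions and domain assumptions of the two prior corollaries. From \eqref{eq:ExampleConclusion1} one automatically reads off $\beta_{1}\geq\max(\alpha_{1},0)\geq 0$ and $\beta_{2}\leq\min(\alpha_{2},0)\leq 0$, so indeed $\max(\alpha_{1},\beta_{1})=\beta_{1}$ and $\min(\alpha_{2},\beta_{2})=\beta_{2}$, which is precisely what the second-line equalities in \eqref{eq:ExampleConclusion2} record; this also keeps the shape parameters in the admissible ranges required by Corollaries \ref{cor: ExampleWeight1} and \ref{cor:ExampleWeight2}. Finally, the condition $\tilde{q}\leq u$ is automatic, since $p_{1}\leq u_{1}$ and $1+\beta_{1}/n>0$ give
$$\tfrac{1}{\tilde{q}}\geq\tfrac{1}{p_{1}}\bigl(1+\tfrac{\beta_{1}}{n}\bigr)\geq\tfrac{1}{u_{1}}\bigl(1+\tfrac{\beta_{1}}{n}\bigr)\geq\tfrac{1}{u},$$
so the intermediate Morrey space is non-trivial and the chain of embeddings yields the desired conclusion, with the embedding constant controlled by the product of the constants produced by the two corollaries.
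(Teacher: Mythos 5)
Your proposal is correct and follows essentially the same route as the paper's own proof: interposing an unweighted Morrey space $M_{u,\tilde q}$ and composing Corollary \ref{cor: ExampleWeight1} with Corollary \ref{cor:ExampleWeight2}, with conditions \eqref{eq:ExampleConclusion1} and \eqref{eq:ExampleConclusion2} arising exactly as the intersections of the respective parameter bands. Your closing verification that $\tilde q\leq u$ (so the intermediate space is non-trivial) matches the paper's final chain of inequalities as well.
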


\begin{proof}
The proof is essentially a combination of Corollary \ref{cor: ExampleWeight1}
and Corollary \ref{cor:ExampleWeight2}. We want to find parameters
$u$ and $p$ such that
\[
M_{u_{1},p_{1}}\left(\omega_{\alpha_{1},\beta_{1}}\right)\overset{\mbox{\tiny(i)}}{\hookrightarrow}M_{u,p}\overset{\mbox{\tiny(ii)}}{\hookrightarrow}M_{u_{2},p_{2}}\left(\omega_{\alpha_{2},\beta_{2}}\right).
\]
We use Corollary \ref{cor: ExampleWeight1} in (i) such that \eqref{eq:ErstesBeispielErsteBedingung}
and \eqref{eq:ErstesBeispielZweiteBedingung} hold and Corollary \ref{cor:ExampleWeight2}
in (ii) such that \eqref{eq:ZweitesBeispielErsteBedingung} and \eqref{eq:ZweitesBeispielZweiteBedingung}
hold (where we have to be careful about the correct use of parameters).
Combining the first conditions of Corollary \ref{cor: ExampleWeight1}
and Corollary \ref{cor:ExampleWeight2}, i.e. \eqref{eq:ErstesBeispielErsteBedingung}
and \eqref{eq:ZweitesBeispielErsteBedingung} yields \eqref{eq:ExampleConclusion1}.
Note that the conditions $\beta_{1}=\max\left(\alpha_{1},\beta_{1}\right)$
and $\beta_{2}=\min\left(\alpha_{2},\beta_{2}\right)$ are implicitly
taken from there. Combining the second condition of Corollary \ref{cor: ExampleWeight1}
and Corollary \ref{cor:ExampleWeight2}, i.e. \eqref{eq:ErstesBeispielZweiteBedingung}
and \eqref{eq:ZweitesBeispielZweiteBedingung} yields \eqref{eq:ExampleConclusion2}. 

Also note that the condition $0<p<u$ for the interposed space $M_{u,p}$
is naturally fulfilled if the above criteria are satisfied. We have
\begin{eqnarray*}
\frac{1}{u} & \overset{\tiny\mbox{\eqref{eq:ExampleConclusion1}}}{\leq} & \min\left(\frac{1}{u_{2}}\left(1+\frac{\min\left(\alpha_{2},0\right)}{n}\right),\frac{1}{u_{1}}\left(1+\frac{\beta_{1}}{n}\right)\right)\\
 & \leq & \frac{1}{u_{1}}\left(1+\frac{\beta_{1}}{n}\right)\\
 & \leq & \frac{1}{p_{1}}\left(1+\frac{\max\left(\alpha_{1},\beta_{1}\right)}{n}\right)\\
 & \overset{\tiny\eqref{eq:ExampleConclusion2}}{<} & \frac{1}{p}.
\end{eqnarray*}
\end{proof}
\begin{example}
\label{ExampleEnd}As an example for the preceding Corollary \ref{cor:CorollaryConclusion}
we investigate $\omega_{\alpha_{1},\beta_{1}}$ and $\omega_{\alpha_{2},\beta_{2}}$
with $\alpha_{1}=\alpha_{2}=:\alpha$ and $\beta_{1}=\beta_{2}=:\beta$,
where $\left|\alpha\right|,\left|\beta\right|<n$. Now since \eqref{eq:ErstesBeispielErsteBedingung}
implies $\beta\geq0$, \eqref{eq:ZweitesBeispielErsteBedingung} implies
$\beta\leq0$ and \eqref{eq:ExampleConclusion1} is a combination
of both, the only possible solution is $\beta=0$. Now \eqref{eq:ExampleConclusion1}
turns into

\[
\max\left(\frac{1}{u_{2}},\frac{1}{u_{1}}\left(1+\frac{\max\left(\alpha,0\right)}{n}\right)\right)\leq\min\left(\frac{1}{u_{2}}\left(1+\frac{\min\left(\alpha,0\right)}{n}\right),\frac{1}{u_{1}}\right),
\]

which implies $\alpha=0$ and further $u_{1}=u_{2}$. Lastly this
also implies $p_{2}<p_{1}$, meaning

\[
M_{u_{1},p_{1}}\left(\omega_{\alpha,\beta}\right)\hookrightarrow M_{u_{2},p_{2}}\left(\omega_{\alpha,\beta}\right)
\]
\[
\mbox{if }\,M_{u_{1},p_{1}}\hookrightarrow M_{u_{2},p_{2}}.
\]

We have already seen a similar result in Corollary \ref{cor:Sch=0000F6nesKorollar}
for unweighted Morrey spaces and in a more general sense in Corollary
\ref{cor:MuckenhouptH=0000F6lder} or Corollary \ref{cor:MuckenhouptH=0000F6lder2}
respectively.
\end{example}

The following theorem can be considered the main result of this thesis.
It gives sufficient conditions for an embedding between two weighted
Morrey spaces equipped with different weights belonging to different
Muckenhoupt classes. Because of the missing necessary condition, it
is not as strong as Theorem \ref{thm:Sch=0000F6nesTheorem} or Corollary
\ref{cor:Sch=0000F6nesKorollar} respectively, but manages to give
insight into some possibilities. 
\begin{thm}
\label{thm:GerholdsSatz}Let $0<q<u_{1}$ and $0<p<u_{2}$. Furthermore
let $\nu^{-1}\in A_{\infty}$ with $r_{\nu^{-1}}<2$. Assume
\begin{equation}
\sup_{B}\frac{\nu\left(B\right)^{1/u_{2}}}{\omega\left(B\right)^{1/u_{1}}}\cdot\left(\frac{\left|B\right|}{\nu\left(B\right)}\right)^{1/p}\left(\frac{\left|B\right|}{\nu^{-1}\left(B\right)}\right)^{1/p}<\infty\label{eq:ConditionGerhold1}
\end{equation}
\begin{equation}
\frac{r_{\omega}}{2-r_{\nu^{-1}}}<\frac{q}{p}.\label{eq:ConditionGerhold2}
\end{equation}
Then the following embedding holds
\[
M_{u_{1},q}\left(\omega\right)\hookrightarrow M_{u_{2},p}\left(\nu\right).
\]
Furthermore, if $\omega,\nu^{-1}\in A_{1}$ then $p=q$ is admitted.
\end{thm}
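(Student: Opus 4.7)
The strategy is to combine the two techniques used in Propositions \ref{prop:GewichtInUngewicht} and \ref{prop:UngewichtInGewicht} by applying Hölder's inequality twice: first to peel off the Morrey norm of $f$ in $M_{u_1,q}(\omega)$, and then again to split the remainder into two Muckenhoupt-type integrals, one controlled by $\omega \in A_{r_1}$ and the other by $\nu^{-1}\in A_{r_2}$. Fix a ball $B$ and start from $\nu(B)^{1/u_2-1/p}\bigl(\int_B |f|^p \nu\,dx\bigr)^{1/p}$. Writing $|f|^p\nu=(|f|^q\omega)^{p/q}\cdot(\nu\,\omega^{-p/q})$ and applying Hölder with exponents $q/p$ and $q/(q-p)$ produces $\bigl(\int_B|f|^q\omega\,dx\bigr)^{p/q}$ (which I will replace by the $M_{u_1,q}(\omega)$-norm of $f$) and leaves the integral $\int_B \nu^{q/(q-p)}\,\omega^{-p/(q-p)}\,dx$ to estimate.

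For this remainder I would apply Hölder once more, with exponents $t,t'$ tuned so that the $\nu$-factor becomes the Muckenhoupt integrand $\nu^{1/(r_2-1)}$ and the $\omega$-factor becomes $\omega^{-1/(r_1-1)}$. Setting $a=q/(q-p)$, $b=p/(q-p)$, the requirement $1/t+1/t'=1$ rigidly forces $a r_2 + b r_1 = 2a$, i.e.\ $r_2 = 2 - (p/q)r_1$. Condition \eqref{eq:ConditionGerhold2} is precisely the assertion that the open interval $\bigl(r_\omega,\,q(2-r_{\nu^{-1}})/p\bigr)$ is non-empty; any $r_1$ chosen from it yields a matching $r_2$ with $r_{\nu^{-1}}<r_2\le 2-p/q$, so that by the definitions of $r_\omega$ and $r_{\nu^{-1}}$ both $\omega\in A_{r_1}$ and $\nu^{-1}\in A_{r_2}$ hold.

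Invoking the two Muckenhoupt inequalities (Definition \ref{Def: Muckenhoupt}) then yields $\int_B \omega^{-1/(r_1-1)}\,dx\le C|B|^{r_1'}\omega(B)^{-1/(r_1-1)}$ and $\int_B \nu^{1/(r_2-1)}\,dx\le C|B|^{r_2'}\nu^{-1}(B)^{-1/(r_2-1)}$. Combining these and exploiting the cancellation $ar_2+br_1=2a$ in the $|B|$-exponent, the remainder simplifies to $C|B|^{2a}\nu^{-1}(B)^{-a}\omega(B)^{-b}$. Taking the $(q-p)/q=1/a$ power, multiplying by $\nu(B)^{1/u_2-1/p}$ and substituting the Morrey bound $\bigl(\int_B|f|^q\omega\bigr)^{1/q}\le\omega(B)^{1/q-1/u_1}\Vert f\vert M_{u_1,q}(\omega)\Vert$ should produce, after algebraic simplification, exactly the factor $\tfrac{\nu(B)^{1/u_2}}{\omega(B)^{1/u_1}}\bigl(|B|/\nu(B)\bigr)^{1/p}\bigl(|B|/\nu^{-1}(B)\bigr)^{1/p}$ appearing in \eqref{eq:ConditionGerhold1}, which is finite by hypothesis.

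For the limiting case $p=q$ with $\omega,\nu^{-1}\in A_1$ the first Hölder step degenerates, so instead I would argue pointwise using the $A_1$-characterization of Remark \ref{Remark Definition A_1}: namely $\Vert\nu\vert L_\infty(B)\Vert\le A|B|/\nu^{-1}(B)$ and $\Vert\omega^{-1}\vert L_\infty(B)\Vert\le A|B|/\omega(B)$, then write $\int_B|f|^q\nu = \int_B|f|^q\omega\cdot(\nu\omega^{-1})$ and pull the essential suprema out before applying the Morrey bound. The main obstacle I foresee is the exponent bookkeeping that turns the rigid identity $ar_2+br_1=2a$ into the sharp condition \eqref{eq:ConditionGerhold2}, and then having the $|B|$-, $\omega(B)$-, $\nu^{-1}(B)$- and $\nu(B)$-exponents conspire to reproduce \eqref{eq:ConditionGerhold1} exactly; once the admissibility of $r_1$ is secured by \eqref{eq:ConditionGerhold2}, however, everything else is forced rather than chosen.
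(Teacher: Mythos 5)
Your proposal is correct and follows the paper's proof essentially verbatim: the same two-fold application of H\"older's inequality, the same reduction of \eqref{eq:ConditionGerhold2} to the non-emptiness of the admissible range for the two auxiliary Muckenhoupt indices (you parametrize by $\left(r_{1},r_{2}\right)$ subject to $r_{2}=2-\frac{p}{q}r_{1}$, where the paper uses the free H\"older exponent $s$ with $r=\frac{q-p}{sp}+1$ and $\varrho=\frac{q-p}{s'q}+1$ --- an equivalent bookkeeping), the same $A_{r}$-estimates on the two leftover integrals producing \eqref{eq:ConditionGerhold1}, and the same $L_{\infty}$-argument for the $p=q$ case with $\omega,\nu^{-1}\in A_{1}$. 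The exponent arithmetic you flag as the main obstacle does close up exactly as you predict, so no gap remains.
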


\begin{proof}
Let $f\in M_{u_{1},q}\left(\omega\right)$ and $B$ be ball. We begin
by using Hölder's inequality twice; the first time for $\frac{q}{p}>1$
and the second time for $s>1$ to be chosen later according to its
use: 
\begin{eqnarray*}
 &  & \left(\int_{B}\left|f\left(x\right)\right|^{p}\nu\left(x\right)dx\right)^{1/p}\\
 & = & \left(\int_{B}\left|f\left(x\right)\right|^{p}\nu\left(x\right)\omega\left(x\right)^{\frac{p}{q}}\omega\left(x\right)^{-\frac{p}{q}}dx\right)^{1/p}\\
 & \leq & \left(\int_{B}\left|f\left(x\right)\right|^{q}\omega\left(x\right)dx\right)^{1/q}\left(\int_{B}\omega\left(x\right)^{-\frac{p}{q-p}}\nu\left(x\right)^{\frac{q}{q-p}}dx\right)^{1/p-1/q}\\
 & \leq & \left(\int_{B}\left|f\left(x\right)\right|^{q}\omega\left(x\right)dx\right)^{1/q}\left(\int_{B}\omega\left(x\right)^{-\frac{sp}{q-p}}dx\right)^{\frac{1}{s}\left(\frac{1}{p}-\frac{1}{q}\right)}\left(\int_{B}\nu\left(x\right)^{\frac{s'q}{q-p}}dx\right)^{\frac{1}{s'}\left(\frac{1}{p}-\frac{1}{q}\right)}
\end{eqnarray*}
Multiplying the inequality with $\nu\left(B\right)^{\frac{1}{u_{2}}-\frac{1}{p}}$,
inserting $\omega\left(B\right)^{\frac{1}{u_{1}}-\frac{1}{q}}\omega\left(B\right)^{\frac{1}{q}-\frac{1}{u_{1}}}$
and taking the supremum over all balls $B$ yields
\begin{eqnarray}
\Vert f\vert M_{u_{2},p}\left(\nu\right)\Vert & \leq & \Vert f\vert M_{u_{1},q}\left(\omega\right)\Vert\sup_{B}\nu\left(B\right)^{1/u_{2}-1/p}\omega\left(B\right)^{1/q-1/u_{1}}\nonumber \\
 &  & \cdot\left(\int_{B}\omega\left(x\right)^{-\frac{sp}{q-p}}dx\right)^{\frac{1}{s}\left(\frac{1}{p}-\frac{1}{q}\right)}\left(\int_{B}\nu\left(x\right)^{\frac{s'q}{q-p}}dx\right)^{\frac{1}{s'}\left(\frac{1}{p}-\frac{1}{q}\right)}.\label{eq:SupremumsTermImGerholdschenSatz}
\end{eqnarray}

Now we will take care of the terms within the supremum term individually.
As we have done before in Proposition \ref{prop:GewichtInUngewicht}
and \ref{prop:UngewichtInGewicht} we want to use properties of the
Muckenhoupt weights $\omega$ and $\nu^{-1}$. To do so, we choose
the free parameter $s>1$ accordingly. We have two conditions:
\begin{equation}
\mbox{(i)\,\ }\frac{q-p}{sp}+1=:r>r_{\omega}\,\,\,\,\mbox{ and }\,\,\,\,\mbox{(ii) }\frac{q-p}{s'q}+1=:\varrho>r_{\nu^{-1}},\label{eq:BothInequalities}
\end{equation}
and since $s'=\frac{s}{s-1}$, we can rearrange both inequalities,
acquiring
\[
\frac{q-p}{\left(r_{\omega}-1\right)p}>s>\frac{q-p}{\left[\left(2-r_{\nu^{-1}}\right)q\right]-p}.
\]
This leaves a certain span for $s$ to fulfill both of the above conditions,
if only
\[
\frac{q}{p}>\frac{r_{\omega}}{\left(2-r_{\nu^{-1}}\right)},
\]
and $2>r_{\nu^{-1}}>1$, which we find the only admissible case when
rearranging both inequalities in \eqref{eq:BothInequalities} (Note
that $s>1$ is also fulfilled). Now this gives the second condition
of the proposition (i.e. \eqref{eq:ConditionGerhold2}). 

The next step is to use the Muckenhoupt property of $\omega$ and
$\nu^{-1}$ for the integral terms including $\omega$ and $\nu$
accordingly, as we have done before in Proposition \ref{prop:GewichtInUngewicht}
and \ref{prop:UngewichtInGewicht}. This yields 
\begin{eqnarray*}
 &  & \omega\left(B\right)^{1/q-1/u_{1}}\left(\int_{B}\omega\left(x\right)^{-\frac{sp}{q-p}}dx\right)^{\frac{1}{s}\left(\frac{1}{p}-\frac{1}{q}\right)}\\
 & \leq & A\omega\left(B\right)^{-1/u_{1}}\left|B\right|^{\frac{1}{s}\left(\frac{1}{p}-\frac{1}{q}\right)}\left|B\right|^{\frac{1}{q}}.
\end{eqnarray*}
At last we deal with the integral term including $\nu$. Similar to
Proposition \ref{prop:UngewichtInGewicht} we write $\nu^{\frac{q}{q-p}}=\left(\nu^{-1}\right)^{-\frac{q}{q-p}}$
to use the Muckenhoupt property. We have 
\begin{eqnarray*}
 &  & \nu\left(B\right)^{1/u_{2}-1/p}\left(\int_{B}\left(\nu\left(x\right)^{-1}\right)^{-\frac{s'q}{q-p}}dx\right)^{\left(\frac{q-p}{s'q}\right)\frac{1}{p}}\\
 & \leq & C\nu\left(B\right)^{1/u_{2}-1/p}\left(\frac{\left|B\right|}{\nu^{-1}\left(B\right)}\right)^{1/p}\left|B\right|^{\frac{1}{s'}\left(\frac{1}{p}-\frac{1}{q}\right)},
\end{eqnarray*}
and thus \eqref{eq:SupremumsTermImGerholdschenSatz} can finally be
estimated as
\begin{eqnarray*}
 &  & \nu\left(B\right)^{1/u_{2}-1/p}\omega\left(B\right)^{1/q-1/u_{1}}\left(\int_{B}\omega\left(x\right)^{-\frac{sp}{q-p}}dx\right)^{\frac{1}{s}\left(\frac{1}{p}-\frac{1}{q}\right)}\left(\int_{B}\nu\left(x\right)^{\frac{s'q}{q-p}}dx\right)^{\frac{1}{s'}\left(\frac{1}{p}-\frac{1}{q}\right)}\\
 & \leq & C\omega\left(B\right)^{-\frac{1}{u_{1}}}\nu\left(B\right)^{\frac{1}{u_{2}}-\frac{1}{p}}\left(\frac{\left|B\right|}{\nu^{-1}\left(B\right)}\right)^{1/p}\left|B\right|^{\left(\frac{1}{s'}+\frac{1}{s}\right)\left(\frac{1}{p}-\frac{1}{q}\right)+\frac{1}{q}}\\
 & =C & \frac{\nu\left(B\right)^{1/u_{2}}}{\omega\left(B\right)^{1/u_{1}}}\left(\frac{\left|B\right|}{\nu\left(B\right)}\right)^{1/p}\left(\frac{\left|B\right|}{\nu^{-1}\left(B\right)}\right)^{1/p},
\end{eqnarray*}
where the supremum of the term is finite by assumption \eqref{eq:ConditionGerhold1}.
This concludes the first part of the proof.

Let us now have a look at the case $\omega,\nu^{-1}\in A_{1}$. In
that case $p=q$ is admitted, and we get the result by combining the
arguments for $p=q$ used in Proposition \ref{prop:GewichtInUngewicht}
and Proposition \ref{prop:UngewichtInGewicht} as follows; let $f\in M_{u_{1},q}\left(\omega\right)$
and $B$ be a ball, then
\begin{eqnarray}
 &  & \nu\left(B\right)^{1/u_{2}-1/p}\left(\int_{B}\left|f\left(x\right)\right|^{p}\nu\left(x\right)dx\right)^{1/p}\nonumber \\
 & \leq & \nu\left(B\right)^{1/u_{2}-1/p}\Vert\nu\vert L_{\infty}\left(B\right)\Vert\Vert\omega^{-1}\vert L_{\infty}\left(B\right)\Vert\left(\int_{B}\left|f\left(x\right)\right|^{q}\omega\left(x\right)dx\right)^{1/q}\nonumber \\
 & = & \nu\left(B\right)^{1/u_{2}-1/p}\Vert\nu\vert L_{\infty}\left(B\right)\Vert\Vert\omega^{-1}\vert L_{\infty}\left(B\right)\Vert\omega\left(B\right)^{1/p-1/u_{1}}\nonumber \\
 &  & \cdot\omega\left(B\right)^{1/u_{1}-1/q}\left(\int_{B}\left|f\left(x\right)\right|^{1/q}\omega\left(x\right)dx\right)^{1/q}.\label{eq:equationnnnn}
\end{eqnarray}
As we have done before in both propositions, we use the $A_{1}$ Muckenhoupt
properties of $\omega$ and $\nu^{-1}$ and take the supremum over
all balls $B$ in \eqref{eq:equationnnnn}, which yields
\begin{eqnarray*}
 &  & \sup_{B}\nu\left(B\right)^{1/u_{2}-1/p}\Vert\nu\vert L_{\infty}\left(B\right)\Vert\Vert\omega^{-1}\vert L_{\infty}\left(B\right)\Vert\omega\left(B\right)^{1/p-1/u_{1}}\Vert f\vert M_{u_{1},q}\left(\omega\right)\Vert\\
 & \leq & \sup_{B}\nu\left(B\right)^{1/u_{2}-1/p}\left|B\right|^{2/p}\nu^{-1}\left(B\right)^{-1/p}\omega\left(B\right)^{-1/p}\omega\left(B\right)^{1/p-1/u_{1}}\Vert f\vert M_{u_{1},q}\left(\omega\right)\Vert\\
 & = & \sup_{B}\left(\frac{\nu\left(B\right)^{1/u_{2}}}{\omega\left(B\right)^{1/u_{1}}}\right)\left(\frac{\left|B\right|}{\nu\left(B\right)}\right)^{1/p}\left(\frac{\left|B\right|}{\nu^{-1}\left(B\right)}\right)^{1/p}\Vert f\vert M_{u_{1},q}\left(\omega\right)\Vert.
\end{eqnarray*}
Again, this is finite by assumption \eqref{eq:ConditionGerhold1}
and thus $f\in M_{u_{2},p}\left(B\right)$, which concludes the proof.
\end{proof}
\begin{rem}
In case of $\nu\equiv\nu^{-1}\equiv1$ we see that $\nu^{-1}\in A_{1}$
and thus $r_{\nu^{-1}}=1$. Consequently \eqref{eq:ConditionGerhold1}
turns into \eqref{eq:GewichtInUngewichtErsteBedingung} and \eqref{eq:ConditionGerhold2}
replicates \eqref{eq:GewichtInUngewichtZweiteBedingung}. Equivalently
for $\omega\equiv1$ we see $r_{\omega}=1$ and \eqref{eq:ConditionGerhold1}
turns into \eqref{eq:UngewichtInGewichtErsteBedingung} and \eqref{eq:ConditionGerhold2}
reproduces \eqref{eq:UngewichtInGewichtZweiteBedingung}. Lastly if
$\omega\equiv\nu\equiv\nu^{-1}\equiv1$, we see that $\omega,\nu,\nu^{-1}\in A_{1}$
and hence $r_{\omega}=r_{\nu^{-1}}=1$. With this assignment of parameters
Theorem \ref{thm:GerholdsSatz} becomes Corollary \ref{cor:MuckenhouptH=0000F6lder}
or Corollary \ref{cor:MuckenhouptH=0000F6lder2} respectively, since
\eqref{eq:ConditionGerhold1} becomes $\sup_{B}\left|B\right|^{1/u_{2}-1/u_{1}}$
which can only be finite if $u_{1}=u_{2}$ and \eqref{eq:ConditionGerhold2}
turns into $\frac{q}{p}\geq1$, which also seems natural.
\end{rem}

\begin{cor}
\label{cor:EndCorollary}Let $\min\left(\alpha_{1},\beta_{1}\right)>-n$
and $\left|\alpha_{2}\right|,\left|\beta_{2}\right|<n$. Also let
$0<q\leq u_{1}$ and $0<p\leq u_{2}$. Assume 
\begin{equation}
\frac{1}{2n}\left(\frac{\beta_{1}}{u_{1}}-\frac{\beta_{2}}{u_{2}}\right)\geq\frac{1}{u_{2}}-\frac{1}{u_{1}}\geq\frac{1}{2n}\max\left(\frac{\alpha_{1}}{u_{1}}-\frac{\alpha_{2}}{u_{2}},0\right)\mbox{ and}\label{eq:EndKorollarBedingung1}
\end{equation}
\begin{equation}
\frac{q}{p}>\frac{n+\max\left(\alpha_{1},\beta_{1},0\right)}{n+\min\left(\alpha_{2},\beta_{2},0\right)}.\label{eq:EndKorollarBedingung2}
\end{equation}
Then the following embedding holds
\[
M_{u_{1},q}\left(\omega_{\alpha_{1},\beta_{1}}\right)\hookrightarrow M_{u_{2},p}\left(\omega_{\alpha_{2},\beta_{2}}\right).
\]

\newpage{}
\end{cor}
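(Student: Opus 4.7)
The plan is to apply Theorem~\ref{thm:GerholdsSatz} directly with $\omega=\omega_{\alpha_1,\beta_1}$ and $\nu=\omega_{\alpha_2,\beta_2}$, so the task reduces to translating the two hypotheses \eqref{eq:ConditionGerhold1} and \eqref{eq:ConditionGerhold2} into statements about $\alpha_i,\beta_i,u_i,p,q$. The $r$-indices are read off from Example~\ref{example1}: since $\omega_{\alpha,\beta}\in A_r$ iff $-n<\alpha,\beta<n(r-1)$, and since $\nu^{-1}=\omega_{-\alpha_2,-\beta_2}$, one obtains
\[ r_{\omega}=1+\frac{\max(\alpha_1,\beta_1,0)}{n},\qquad r_{\nu^{-1}}=1-\frac{\min(\alpha_2,\beta_2,0)}{n}. \]
The hypothesis $|\alpha_2|,|\beta_2|<n$ then immediately gives $r_{\nu^{-1}}<2$.

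Substituting into \eqref{eq:ConditionGerhold2} yields
\[ \frac{r_\omega}{2-r_{\nu^{-1}}}=\frac{n+\max(\alpha_1,\beta_1,0)}{n+\min(\alpha_2,\beta_2,0)}, \]
so that \eqref{eq:EndKorollarBedingung2} is precisely the second hypothesis of Theorem~\ref{thm:GerholdsSatz} for this pair of weights.

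The main work is checking the supremum condition \eqref{eq:ConditionGerhold1}, and a useful preliminary observation simplifies matters significantly: for the weight $\nu=\omega_{\alpha_2,\beta_2}$ one has, by a direct case-by-case computation (the same ball decomposition as in Example~\ref{example1}), that $\nu(B)\cdot\nu^{-1}(B)\sim|B|^2$ uniformly in $B$. This collapses the $p$-dependent factor via $(|B|/\nu(B))^{1/p}(|B|/\nu^{-1}(B))^{1/p}\sim 1$, so \eqref{eq:ConditionGerhold1} reduces to the $p$-free supremum
\[ \sup_B\frac{\omega_{\alpha_2,\beta_2}(B)^{1/u_2}}{\omega_{\alpha_1,\beta_1}(B)^{1/u_1}}<\infty. \]
From there I would mimic the four-part ball decomposition carried out in Corollaries~\ref{cor: ExampleWeight1} and~\ref{cor:ExampleWeight2}: Part~1, $x_0=0$ with $R<1$ or $R>1$; Part~2, $|x_0|<R/2$, which reduces to Part~1 via the inclusions $B_{R/2}(0)\subset B_R(x_0)\subset B_{3R/2}(0)$; Part~3, $|x_0|>2R$, where the triangle inequality gives $\omega_{\alpha,\beta}(x)\sim\omega_{\alpha,\beta}(x_0)$ on $B$; and Part~4, the intermediate regime, which reduces to Parts~1 and 3.

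The main obstacle is the bookkeeping across these regimes. Inserting the asymptotics $\omega_{\alpha,\beta}(B)\sim R^{n+\alpha}$ (resp.\ $R^{n+\beta}$) for small (resp.\ large) balls centered at the origin forces, in the quotient, the conditions $(n+\alpha_2)/u_2\geq (n+\alpha_1)/u_1$ and $(n+\beta_2)/u_2\leq (n+\beta_1)/u_1$, while Part~3 with $R\to 0$ contributes $1/u_2\geq 1/u_1$; the latter is exactly what produces the $0$ inside the $\max$ in \eqref{eq:EndKorollarBedingung1}. Collecting and rewriting these three estimates in terms of $1/u_2-1/u_1$ yields the double inequality \eqref{eq:EndKorollarBedingung1}, and an appeal to Theorem~\ref{thm:GerholdsSatz} completes the proof.
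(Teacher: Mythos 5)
Your route is genuinely different from the paper's: the paper obtains Corollary \ref{cor:EndCorollary} by factorizing through an intermediate \emph{unweighted} Morrey space, i.e.\ by combining the conditions of Corollary \ref{cor: ExampleWeight1} and Corollary \ref{cor:ExampleWeight2} as in Corollary \ref{cor:CorollaryConclusion}, whereas you apply Theorem \ref{thm:GerholdsSatz} directly to the pair $\omega=\omega_{\alpha_1,\beta_1}$, $\nu=\omega_{\alpha_2,\beta_2}$. Your preparatory steps are sound: the identifications $r_{\omega}=1+\max(\alpha_1,\beta_1,0)/n$ and $r_{\nu^{-1}}=1-\min(\alpha_2,\beta_2,0)/n$ are correct, condition \eqref{eq:ConditionGerhold2} does become exactly \eqref{eq:EndKorollarBedingung2}, and the observation $\nu(B)\nu^{-1}(B)\sim|B|^2$ is precisely the $A_2$ condition for $\omega_{\alpha_2,\beta_2}$, valid because $|\alpha_2|,|\beta_2|<n$; this is a genuinely useful simplification of \eqref{eq:ConditionGerhold1} that the paper does not exploit. (Note only that Theorem \ref{thm:GerholdsSatz} is stated for $q<u_1$ and $p<u_2$ strictly, so the boundary cases admitted in the corollary would need a separate word.)

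The gap is in your final step. The ball decomposition yields, as you say, the requirements $(n+\alpha_2)/u_2\geq(n+\alpha_1)/u_1$, $(n+\beta_2)/u_2\leq(n+\beta_1)/u_1$ and $1/u_2\geq 1/u_1$; rewritten, these read
\[
\frac{1}{n}\Bigl(\frac{\beta_1}{u_1}-\frac{\beta_2}{u_2}\Bigr)\;\geq\;\frac{1}{u_2}-\frac{1}{u_1}\;\geq\;\frac{1}{n}\max\Bigl(\frac{\alpha_1}{u_1}-\frac{\alpha_2}{u_2},\,0\Bigr),
\]
with a factor $\tfrac{1}{n}$, not $\tfrac{1}{2n}$, so they are \emph{not} the double inequality \eqref{eq:EndKorollarBedingung1}. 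For the left ($\beta$) inequality the discrepancy is harmless: \eqref{eq:EndKorollarBedingung1} forces $\frac{1}{u_2}-\frac{1}{u_1}\geq 0$ and hence $\frac{\beta_1}{u_1}-\frac{\beta_2}{u_2}\geq 0$, so the $\tfrac{1}{2n}$-version implies the $\tfrac{1}{n}$-version. For the right ($\alpha$) inequality it is fatal: if $\frac{\alpha_1}{u_1}-\frac{\alpha_2}{u_2}>0$, the hypothesis only guarantees $\frac{1}{u_2}-\frac{1}{u_1}\geq\frac{1}{2n}\bigl(\frac{\alpha_1}{u_1}-\frac{\alpha_2}{u_2}\bigr)$, which permits $(n+\alpha_2)/u_2<(n+\alpha_1)/u_1$; then $\sup_{R<1}R^{(n+\alpha_2)/u_2-(n+\alpha_1)/u_1}=\infty$ over balls centered at the origin, \eqref{eq:ConditionGerhold1} fails, and Theorem \ref{thm:GerholdsSatz} cannot be invoked. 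What your argument actually proves is the corollary with $\tfrac{1}{n}$ in place of $\tfrac{1}{2n}$ in \eqref{eq:EndKorollarBedingung1}. (The same factor-of-two issue afflicts the paper's own deduction from \eqref{eq:ExampleConclusion1}, whose $\alpha$-constraint also carries a $\tfrac{1}{n}$; the $\tfrac{1}{2n}$ in the statement appears too weak for either route, so you should flag it rather than claim the stated hypothesis suffices.)
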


\begin{proof}
The proof is again a combination of previous corollaries. \eqref{eq:EndKorollarBedingung2}
is a direct conclusion of \eqref{eq:ExampleConclusion2}, which in
itself was a conclusion of \eqref{eq:ErstesBeispielZweiteBedingung}
and \eqref{eq:ZweitesBeispielZweiteBedingung}. Now \eqref{eq:EndKorollarBedingung1}
is a deduction of \eqref{eq:ErstesBeispielErsteBedingung} and \eqref{eq:ZweitesBeispielErsteBedingung}.
We simply subtract the latter from the first condition, rearrange
terms and take into consideration that $\frac{\max\left(\alpha_{1},0\right)}{u_{1}}-\frac{\min\left(\alpha_{2},0\right)}{u_{2}}\geq\max\left(\frac{\alpha_{1}}{u_{1}}-\frac{\alpha_{2}}{u_{2}}\right)$
for the configuration of parameters given above.
\end{proof}
\begin{rem}
An immediate consequence of Corollary \ref{cor:EndCorollary} in case
of $\alpha_{1}=\alpha_{2}=\alpha$ and $\beta_{1}=\beta_{2}=\beta$
is
\[
M_{u_{1},q}\left(\omega_{\alpha,\beta}\right)\hookrightarrow M_{u_{2},p}\left(\omega_{\alpha,\beta}\right)
\]
whenever $u_{1}=u_{2}$ and $\frac{1}{p}>\frac{1}{q}\frac{n+\max\left(\alpha_{1},\beta_{1},0\right)}{n+\min\left(\alpha_{2},\beta_{2},0\right)}>\frac{1}{q}$,
as we have seen before in Example \ref{ExampleEnd}.
\end{rem}

Except for Corollary \ref{cor:MuckenhouptH=0000F6lder} and Corollary
\ref{cor:MuckenhouptH=0000F6lder2} we only used weights belonging
to Muckenhoupt classes throughout this section. To conclude this section
and our study concerning the topic within this thesis, we give another
result for general weights, which can be deduced rather easily and
does not need as much preliminaries.
\begin{lem}
Let $0<p\leq u<\infty$ and $\omega$, $\nu$ be weights. We have
\[
M_{u,p}\left(\omega\right)=M_{u,p}\left(\nu\right)\Longleftrightarrow\nu\sim\omega\mbox{ a.e.}.
\]
\end{lem}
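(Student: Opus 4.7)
For the implication ``$\Longleftarrow$'', suppose $\nu\sim\omega$ a.e., i.e.\ there exist constants $c,C>0$ with $c\,\omega(x)\le\nu(x)\le C\,\omega(x)$ for a.e.\ $x\in\mathbb{R}^n$. Integrating over any ball and plugging these inequalities into the quasi-norm of Definition~\ref{Def:WeightedMorreySpaces} yields $\Vert f\vert M_{u,p}(\omega)\Vert\sim\Vert f\vert M_{u,p}(\nu)\Vert$, so the two spaces coincide as sets (and the equivalence of norms comes for free).

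For the converse, assume $M_{u,p}(\omega)=M_{u,p}(\nu)$ as sets. Since both are quasi-Banach spaces by Lemma~\ref{Re: RepeatedRemark}(iv), the closed graph theorem (after passing to the equivalent $\varrho$-norm mentioned right after that lemma so as to realise the space as an F-space) applied to the identity map produces constants $c,C>0$ with
\[
c\,\Vert f\vert M_{u,p}(\omega)\Vert\le\Vert f\vert M_{u,p}(\nu)\Vert\le C\,\Vert f\vert M_{u,p}(\omega)\Vert
\]
for every $f$ in the common space. The plan is to exploit this equivalence by testing on characteristic functions $\chi_{B_{0}}$ of arbitrary balls $B_{0}$, which lie in both spaces because $\omega,\nu\in L_{1}^{loc}$.

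The key computation is $\Vert\chi_{B_{0}}\vert M_{u,p}(\omega)\Vert=\omega(B_{0})^{1/u}$. The choice $B_R(x)=B_{0}$ inside the supremum already delivers the lower bound. For the matching upper bound, set $A:=\omega(B_R(x))$ and $D:=\omega(B_R(x)\cap B_{0})$; then $D\le\min(A,\omega(B_{0}))$, and since $1/u-1/p\le 0$ a short case distinction on whether $A\le\omega(B_{0})$ or $A>\omega(B_{0})$ shows $A^{1/u-1/p}D^{1/p}\le\omega(B_{0})^{1/u}$ in either case. The identical calculation for $\nu$ gives $\Vert\chi_{B_{0}}\vert M_{u,p}(\nu)\Vert=\nu(B_{0})^{1/u}$, so the norm equivalence transfers to $c^{u}\,\omega(B)\le\nu(B)\le C^{u}\,\omega(B)$ for every ball $B$, with uniform constants. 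Dividing by $\vert B_R(x)\vert$ and invoking the Lebesgue differentiation theorem for the locally integrable weights $\omega,\nu$ finally yields $\omega(x)\sim\nu(x)$ for a.e.\ $x$, completing the proof.

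The main obstacle will be the rigorous appeal to the closed graph theorem in the quasi-Banach setting for $0<p<1$, which requires the equivalent $\varrho$-norm to promote $M_{u,p}(\omega)$ to an F-space. A secondary technical point is verifying that $\Vert\chi_{B_{0}}\vert M_{u,p}(\omega)\Vert$ is actually the equality $\omega(B_{0})^{1/u}$ (not merely a one-sided estimate), so that the constants survive the passage through the Lebesgue differentiation step without deterioration.
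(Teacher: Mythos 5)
Your proof is correct and follows the same basic route as the paper: derive an equivalence of the quasi-norms from the equality of the two spaces, test it on suitable functions to obtain $\omega(B)\sim\nu(B)$ uniformly over all balls $B$, and then pass to the pointwise statement. Your version is in fact more careful than the paper's, which merely asserts the norm equivalence and a ball-by-ball inequality and then says this "quickly leads to" the conclusion, whereas you supply the closed-graph justification, the exact evaluation $\Vert\chi_{B}\vert M_{u,p}(\omega)\Vert=\omega(B)^{1/u}$ (where even the case distinction is unnecessary, since $A^{1/u-1/p}D^{1/p}\le D^{1/u}$ follows directly from $D\le A$ and $1/u-1/p\le 0$), and the Lebesgue differentiation step.
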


\begin{proof}
For the first part, let $M_{u,p}\left(\omega\right)=M_{u,p}\left(\nu\right)$.
By the equivalence of norms, we have
\begin{eqnarray*}
 &  & c\nu\left(B\right)^{\frac{1}{u}-\frac{1}{p}}\int_{B}\left|f\left(x\right)\right|^{p}\nu\left(x\right)dx\\
 & \leq & \omega\left(B\right)^{\frac{1}{u}-\frac{1}{p}}\int_{B}\left|f\left(x\right)\right|^{p}\omega\left(x\right)dx\\
 & \leq & C\nu\left(B\right)^{\frac{1}{u}-\frac{1}{p}}\int_{B}\left|f\left(x\right)\right|^{p}\nu\left(x\right)dx,
\end{eqnarray*}
which quickly leads to $c\nu\leq\omega\leq C\nu$. The second part
works analogue.
\end{proof}
\newpage{} 

\end{document}